\documentclass[11pt,reqno]{amsart}
\usepackage{mathrsfs}
\usepackage{cite}

\def\subjclass#1{{\renewcommand{\thefootnote}{}%
\footnote{\emph{Mathematics Subject Classification (2020):} #1}}}

\usepackage{amsmath}
\usepackage{amssymb}
\usepackage{amsfonts}

\DeclareMathOperator{\divg}{div}

\date{\today}

\hoffset=- 2cm \voffset=- 0cm 
 \theoremstyle{plain}
\newtheorem{Thm}{Theorem}[section]
\newtheorem{Lem}{Lemma}[section]
\newtheorem{Assum}{Assumption}[section]

\newtheorem{Rem}{Remark}[section]
\newtheorem{Cor}{Corollary}[section]

\usepackage{amssymb}
\usepackage{color}

\def\0{\mathbf 0}

\def\v{\vskip}

\errorcontextlines=0 \numberwithin{equation}{section}
\numberwithin{Thm}{section}
\allowdisplaybreaks

\begin{document}
\large
%Topmatter

\title[Liouville-type theorems]
{Two improved Liouville-type theorems for the 3D stationary tropical climate model without temperature assumptions}

\author{Zhibing Zhang}

\address{Zhibing Zhang: School of Mathematics and Statistics, Key Laboratory of Modeling, Simulation and Control of Complex Ecosystem in Dabie Mountains of Anhui Higher Education Institutes, Anqing Normal University, Anqing 246133, China}
\email{zhibingzhang29@aqnu.edu.cn and zhibingzhang29@126.com}%

%zhibingzhang29@aqnu.edu.cn;
\thanks{}

\keywords{Liouville-type theorems, tropical climate model, without temperature assumptions, power-law growth conditions, logarithmic corrections}

\subjclass{35B53, 35Q35, 35A02}

\begin{abstract}
In this paper, we establish two improved Liouville-type theorems for the three-dimensional stationary tropical climate model without imposing any temperature assumptions. Owing to the special structure inherent to the equations, we fully exploit the $L^2$-estimate for the oscillation of the temperature, refined interpolation techniques and an iteration argument to establish a Liouville-type theorem under more flexible and relaxed power-law growth conditions. As an interesting consequence, we obtain the triviality of solutions under the integrability conditions imposed only on $(u,\nabla v)$ or $(\nabla u,\nabla v)$. Furthermore, by developing a systematic framework to handle the energy function associated with the non-trivial solutions and with the aid of delicate ODE analysis and a contradiction argument, we refine our Liouville-type theorem by introducing logarithmic corrections to the growth rates.
\end{abstract}
\maketitle
%end topmatter

%\tableofcontents

\section{Introduction}
We consider the following stationary tropical climate model in $\mathbb{R}^3$:
\begin{equation}\label{equ1.1}
  \left\{
    \begin{array}{ll}
     -\Delta u+(u\cdot\nabla) u+\nabla \pi+\divg(v\otimes v)=0,  \\
   -\Delta v+(u\cdot\nabla) v+\nabla \theta+(v\cdot\nabla)u=0, \\
   -\Delta \theta+u\cdot\nabla\theta+\divg v=0,\\
     \divg u=0,
\end{array}
  \right.
\end{equation}
where $u= (u_1 , u_2 , u_3 )$ is the barotropic mode, $v= (v_1 , v_2 , v_3 )$ is the first baroclinic mode of vector velocity, and $\theta$ and $\pi$ represent the temperature and pressure, respectively. The tropical climate model was originally proposed by Frierson, Majda and Pauluis in \cite{FMP04} to characterize the large-scale dynamical behaviors of precipitation fronts within the tropical atmosphere. Comprehensive fundamental background concerning this tropical climate model is available in \cite{LT16,Majda03,MB03}. From a mathematical perspective, \eqref{equ1.1} possesses a characteristic structural feature originating from the coupling of the divergence-free velocity field $u$ and the non-divergence-free velocity field $v$. This hybrid coupled structure induces substantial mathematical difficulties and nontrivial analytical challenges for theoretical investigation.

When $v=0$ and $\theta$ is a constant, \eqref{equ1.1} is reduced to the stationary incompressible Navier-Stokes equations:
\begin{align}\label{equ1.2}
  \left\{
    \begin{array}{ll}
     -\Delta u+(u\cdot\nabla) u+\nabla \pi=0,  \\
     \divg u=0.
\end{array}
  \right.
\end{align}
A challenging Liouville-type problem for \eqref{equ1.2} is whether \eqref{equ1.2} has non-trivial solutions with the finite Dirichlet energy
\begin{align*}
\int_{\mathbb{R}^{3}}|\nabla u|^{2}dx<+\infty,
 \end{align*}
and the vanishing condition at infinity
\begin{align*}
\lim_{|x|\rightarrow+\infty}u(x)=0.
\end{align*}
This Liouville-type problem was explicitly proposed in Galdi's book \cite[Remark X.9.4]{Galdi}.
Since the original problem is too difficult, various extra or alternative assumptions have been introduced to obtain Liouville-type theorems for \eqref{equ1.2}, see \cite{BGWX25,CPZ20,CPZZ20,Chae14,Chae25,Chae26,CW16,CW19,CJL21,CNY24,CY26a,CoY26,KNSS09,KTW17,Seregin16,Seregin18,SW19,Tsai21} and the references therein.

In recent years, Liouville-type results for the tropical climate model have attracted increasing attention.
To state Liouville-type results conveniently, we introduce some notations.
Let $A_R$ represent the annulus $B_{2R}\backslash \overline{B_R}$. We denote
\begin{align}
&X_{p,\alpha}(R)=\left(\int_{A_R}\left(\frac{|u|}{|x|^\alpha}\right)^pdx\right)^\frac{1}{p},\;Y_{q,\beta}(R)=\left(\int_{A_R}\left(\frac{|v|}{|x|^\beta}\right)^qdx\right)^\frac{1}{q},\notag \\
&Z_{r,\gamma}(R)=\left(\int_{A_R}\left(\frac{|\theta|}{|x|^\gamma}\right)^rdx\right)^\frac{1}{r},\;Y'_{2,\gamma}(R)=\left(\int_{A_R}\left(\frac{|\nabla v|}{|x|^\gamma}\right)^2dx\right)^\frac{1}{2},\label{nota}\\
&X_{p,\alpha,\lambda}(R)=\left(\int_{A_R}\left(\frac{|u|}{|x|^\alpha\ln^\lambda |x|}\right)^pdx\right)^\frac{1}{p},\;Y_{q,\beta,\mu}(R)=\left(\int_{A_R}\left(\frac{|v|}{|x|^\beta\ln^\mu |x|}\right)^qdx\right)^\frac{1}{q}.\notag
\end{align}
Ding and Wu \cite{FW21} investigated the tropical climate model and proved a Liouville-type theorem under one of the following assumptions
$$
\aligned
&(\mathrm{i})~~~~u\in L^p\left(\mathbb{R}^{3}\right) ~~~~~~~~~~~~\text{and}~~~~~~~~~~~~v,\theta\in L^p\left(\mathbb{R}^{3}\right)\cap L^3\left(\mathbb{R}^{3}\right) ~~~~~~~~~~~~\text{with}~~~~~~~~~~~~ 3\leq p\leq \frac{9}{2},\\
&(\mathrm{ii})~~~~u\in L^{3}\left(\mathbb{R}^{3}\right), v \in L^{2}\left(\mathbb{R}^3\right)~~~~~~~~~~~~\text{and}~~~~~~~~~~~~\nabla u,\nabla v,\nabla\theta \in L^{2}\left(\mathbb{R}^3\right).
\endaligned
$$
Recently, Cho et al. \cite{CIY24} established an improved Liouville-type theorem for \eqref{equ1.1} under one of the following conditions
\begin{align*}
&(\mathrm{i})\;\liminf_{R\rightarrow+\infty}\left[X_{p,\alpha}(R)+Y_{q,\beta}(R)+Z_{r,\gamma}(R)\right]<+\infty, \text{ with }p\in\left(\frac{3}{2},3\right),\;q,r\in[1,2),\\
&(\mathrm{ii})\;\liminf_{R\rightarrow+\infty}\left[X_{p,\alpha}(R)+Y_{q,\beta}(R)\right]=0,\;\limsup_{R\rightarrow+\infty}Z_{r,\gamma}(R)<+\infty,\text{ with }p=3,\;q=r=2,\\
&(\mathrm{iii})\;\liminf_{R\rightarrow+\infty}\left[X_{p,\alpha}(R)+Z_{r,\gamma}(R)\right]=0,\;\limsup_{R\rightarrow+\infty}Y_{q,\beta}(R)<+\infty,\text{ with }p=3,\;q=r=2,
\end{align*}
where $\alpha$, $\beta$ and $\gamma$ are given respectively by
$$
\alpha=\frac{2}{p}-\frac{1}{3},\;\beta=\frac{3}{2q}-\frac{1}{4},\;\gamma=\frac{3}{2r}-\frac{1}{4}.
$$
In \cite{DFZ26}, Dong, Fang and the author of this paper improved and extended the above result.
Very recently, Cho and Yang \cite{CY26} obtained  Liouville-type theorems without temperature assumptions.
Let the parameters $\alpha$, $\beta$ and $\gamma$ be redefined as
\begin{align}\label{abc}
\alpha=\frac{2}{p}-\frac{1}{3},\;\beta=\frac{2}{q}-\frac{1}{3},\;\gamma=0.
\end{align}
They showed the triviality of the solution if $u$ and $v$ satisfy one of the following conditions
\begin{align}
&\mathrm{(A1)}\;\liminf\limits_{R\rightarrow+\infty}\left[X_{p,\alpha}(R)+Y_{q,\beta}(R)+Y'_{2,\gamma}(R)\right]<+\infty, \text{ with }p\in\left(\frac{3}{2},3\right),\label{c1.3}\\
&\mathrm{(A2)}\;\liminf\limits_{R\rightarrow+\infty}X_{p,\alpha}(R)=0,\;\limsup\limits_{R\rightarrow+\infty}\left[Y_{q,\beta}(R)+Y'_{2,\gamma}(R)\right]<+\infty, \text{ with }p=3.\label{c1.4}
\end{align}

Our aim of this paper is to extend and improve Cho-Yang's Liouville-type theorems without temperature assumptions.
We make full use of the $L^2$-estimate for the oscillation of $\theta$ and sharpened interpolation techniques, combined with the energy method and an iteration argument to
establish new Liouville-type theorems valid for much more flexible and relaxed power-law growth conditions. As an instructive consequence, we derive Liouville-type theorems merely requiring integrability constraints on either the pair \((u,\nabla v)\) or \((\nabla u,\nabla v)\).
Furthermore, we construct a systematic analytical framework to analyse the energy functional corresponding to nontrivial solutions and adopt proof by contradiction to further weaken the aforementioned power-law growth conditions via logarithmic corrections, leading to additional strengthened results.

Before stating our Liouville-type results, we need to introduce some basic assumptions on the parameters $p$, $q$, $\alpha$, $\beta$, $\gamma$, $\lambda$, $\mu$.
\begin{Assum}\label{a1.1}
Let $(p,q)\in\left(\frac{3}{2},3\right]\times[1,2]$ and $\alpha\in\left[0,\frac{2}{p}-\frac{1}{3}\right]$, $\beta\in\left[0,\frac{3}{q}-\frac{1}{2}\right]$, $\gamma\in\left[0,\gamma_p\right)$, where
\begin{equation*}
\gamma_p=
\begin{cases}
\frac{6-p}{18-6p}, & \text{ if }\;\frac{3}{2}<p<3, \\
+\infty, & \text{ if }\;p=3.
\end{cases}
\end{equation*}
\end{Assum}

\begin{Assum}\label{a1.2}
\begin{itemize}
\item[(i)] When $\frac{6-2p}{6-p} + \frac{6-3q}{6-q} < 1$, we require
\begin{align}\label{ine1.3}
\frac{p}{6-p}\alpha+\frac{2q}{6-q}\beta\leq1.
\end{align}
\item[(ii)] When $\frac{6-2p}{6-p} + \frac{6-3q}{6-q} \geq 1$ and $\gamma=0$, we require that \eqref{ine1.3} holds.
\item[(iii)] When $\frac{6-2p}{6-p} + \frac{6-3q}{6-q} \geq 1$ and $\gamma>0$, we require
 \begin{align}\label{ine1.6}
 \frac{p}{6-p}\alpha+\frac{2q}{6-q}\beta+\left(\frac{6-2p}{6-p}+\frac{6-3q}{6-q}-1\right)\gamma<1.
 \end{align}
\end{itemize}
\end{Assum}

\begin{Assum}\label{a1.3}
Let $\lambda\in\left[0,\frac{3}{p}-1\right]$ and $\mu\geq0$.
\end{Assum}

\begin{Assum}\label{a1.4}
\begin{itemize}
\item[(i)]
When $\frac{6-2p}{6-p} + \frac{6-3q}{6-q} < 1$ and $\frac{p}{6-p}\alpha+\frac{2q}{6-q}\beta=1$, we require
$$
\frac{p}{6-p}\lambda+\frac{2q}{6-q}\mu\leq \frac{3-p}{6-p}+\frac{6-3q}{12-2q}+\frac{1}{2}.
$$	
\item[(ii)] When $\frac{6-2p}{6-p} + \frac{6-3q}{6-q} \geq 1$ and $\gamma=0$,
we require
$$
\frac{p}{6-p}\lambda+\frac{2q}{6-q}\mu<1.
$$	
\item[(iii)] When $\beta=\frac{3}{q}-\frac{1}{2}$, we require
$$
p\neq3\;\;\text{ and }\;\;\frac{p}{6-p}\lambda+2\mu\leq \frac{3-p}{6-p}.
$$
\end{itemize}	
\end{Assum}

Our first Liouville-type result reads as follows.
\begin{Thm}\label{main1}
Let $(u,\pi,v,\theta)$ be a smooth solution of \eqref{equ1.1}. Suppose the parameters $p$, $q$, $\alpha$, $\beta$, $\gamma$ satisfy Assumptions \ref{a1.1} and \ref{a1.2}. Furthermore, assume that  \eqref{c1.3} or  \eqref{c1.4} holds.
Then $u=v=0$ and $\theta$ is a constant.
\end{Thm}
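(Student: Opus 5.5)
The plan is a localized energy method in which $\theta$ enters only through its oscillation. The system \eqref{equ1.1} is invariant under $\theta\mapsto\theta+c$, so on each annulus I will work with $\theta-\bar\theta_R$, where $\bar\theta_R$ is the average of $\theta$ over a slightly enlarged annulus. Fix $R\le\rho<\sigma\le 2R$ and a cutoff $\phi$ with $\phi=1$ on $B_\rho$, $\operatorname{supp}\phi\subset B_\sigma$ and $|\nabla^k\phi|\lesssim(\sigma-\rho)^{-k}$. I test the equations against $u\phi$, $v\phi$ and $(\theta-\bar\theta_R)\phi$, respectively.

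The coupling terms cancel after integration by parts. Indeed, $\int\divg(v\otimes v)\cdot u\phi+\int (v\cdot\nabla)u\cdot v\phi$ leaves only $-\int (v\cdot u)(v\cdot\nabla\phi)$. Similarly, $\int\nabla\theta\cdot v\phi+\int\divg v\,(\theta-\bar\theta_R)\phi$ leaves only $-\int(\theta-\bar\theta_R)v\cdot\nabla\phi$. This gives
\begin{align*}
\int(|\nabla u|^2+|\nabla v|^2+|\nabla\theta|^2)\phi\lesssim{}&\int(|u|^2+|v|^2+|\theta-\bar\theta_R|^2)|\Delta\phi|
+\int\big(|u|^3+|u||v|^2+|\pi-\bar\pi_R||u|\big)|\nabla\phi|\\
&+\int|u||\theta-\bar\theta_R|^2|\nabla\phi|+\int|\theta-\bar\theta_R||v||\nabla\phi|,
\end{align*}
where every integral on the right is taken over $A_R$ (suitably enlarged).

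I will treat the right-hand side term by term.

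\emph{Pressure.} Taking the divergence of the first equation gives $-\Delta\pi=\partial_i\partial_j(u_iu_j+v_iv_j)$. I will handle $\pi$ by a local Calder\'on--Zygmund decomposition, bounding $\|\pi-\bar\pi_R\|_{L^{p/2}}$ by $\|u\|_{L^p}^2+\|v\|^2_{L^{2p/2}}$ plus harmonic remainders. Alternatively, I will use the Bogovskii-type test $u\phi-\mathcal B(u\cdot\nabla\phi)$, which removes $\pi$ altogether.

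\emph{Temperature terms.} I will use only Poincar\'e and Sobolev--Poincar\'e:
\[
\|\theta-\bar\theta_R\|_{L^2(A_R)}\lesssim R\|\nabla\theta\|_{L^2(A_R)},\qquad
\|\theta-\bar\theta_R\|_{L^6(A_R)}\lesssim\|\nabla\theta\|_{L^2(A_R)}.
\]
Consequently $\int|\theta-\bar\theta_R||v||\nabla\phi|\le \epsilon\|\nabla\theta\|^2_{L^2(B_\sigma)}+C_\epsilon\frac{R^2}{(\sigma-\rho)^2}\|v\|^2_{L^2(A_R)}$. The cubic term $\int|u||\theta-\bar\theta_R|^2|\nabla\phi|$ is bounded by $\frac{1}{\sigma-\rho}\|u\|_{L^{p}}\|\theta-\bar\theta_R\|^2_{L^{2p'}}$. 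Here $2p'\in[2,6]$ because $p\ge\frac32$, and I interpolate $\|\cdot\|_{L^{2p'}}$ between $L^2$ and $L^6$, both controlled by $\|\nabla\theta\|_{L^2}$ times powers of $R$.

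\emph{$u$ and $v$ terms.} I interpolate $u$ between $L^p$ (the hypothesis, weighted by $|x|^{\alpha}$) and $L^6$ (controlled by $\|\nabla u\|_{L^2}$ plus lower order). I interpolate $v$ between $L^q$ and $L^6$, using either $\|\nabla v\|_{L^2}$ or the hypothesis on $Y'_{2,\gamma}$, which costs $R^\gamma$. The exponents $\frac{6-2p}{6-p}$ and $\frac{6-3q}{6-q}$ in Assumption \ref{a1.2} are exactly the interpolation weights that arise here.

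Putting these together, with $E(r)=\int_{B_r}(|\nabla u|^2+|\nabla v|^2+|\nabla\theta|^2)$, I expect an inequality of the form
\[
E(\rho)\le \tfrac12E(\sigma)+C\sum_k\frac{R^{a_k}}{(\sigma-\rho)^{b_k}}\,X^{s_k}Y^{t_k}(Y')^{m_k}.
\]
A standard iteration lemma then removes the $E(\sigma)$ term, giving $E(R)\lesssim \sum_k R^{a_k-b_k}X^{s_k}Y^{t_k}(Y')^{m_k}$. Assumptions \ref{a1.1}--\ref{a1.2} are precisely the conditions making every total power of $R$ nonpositive. In case (iii), where a positive power $R^\gamma$ appears and the interpolation weights sum to more than $1$, the strict inequality \eqref{ine1.6} leaves room for Young's inequality to absorb the superlinear powers of $E$.

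Under \eqref{c1.3}, taking $\liminf$ along a sequence $R_j\to\infty$ shows that $E(\infty)<\infty$. I then rerun the estimate, now keeping only $\|\nabla\cdot\|_{L^2(A_R)}$ on the right; since $E(\infty)<\infty$, these tails tend to zero. This gives $E(\infty)=0$, hence $u,v$ are constant and $\theta$ is constant. The decay encoded in the hypothesis with $p<3$ forces $u=0$, and the hypothesis on $v$ forces $v=0$. Under \eqref{c1.4} with $p=3$, boundedness alone is not enough, and it is the condition $\liminf X=0$ that makes the leading term vanish. The main obstacle I expect is the bookkeeping in case (iii), namely balancing exponents so the cubic temperature term and the $|u||v|^2$ term can be absorbed. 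I would handle it first by choosing the interpolation parameters so that the $E$-powers sum to exactly $1$ and then applying Young's inequality.
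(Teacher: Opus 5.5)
\textbf{There is a genuine gap in how you control the temperature.} You bound $\theta-\bar\theta_R$ only through Poincar\'e and Sobolev--Poincar\'e, i.e.\ by $R\|\nabla\theta\|_{L^2}$ and $\|\nabla\theta\|_{L^2}$. This causes three failures.
\begin{itemize}
\item The term $\int|\theta-\bar\theta_R|^2|\Delta\phi|$ becomes $\frac{CR^2}{(\sigma-\rho)^2}\|\nabla\theta\|^2_{L^2(A_R)}$.
\item Your cubic term becomes $\frac{CR^{(2p-3)/p}}{\sigma-\rho}\|u\|_{L^p}\|\nabla\theta\|^2_{L^2}$. At $\sigma-\rho\sim R$ its coefficient is of order $R^{\alpha+1-3/p}X_{p,\alpha}(R)$, which grows for the admissible $\alpha>\frac3p-1$.
\item Your bound for the $\theta v$ term leaves $\|v\|^2_{L^2(A_R)}$. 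After interpolation and absorption this is of size $R^{2\beta}Y^2_{q,\beta}(R)$, which is unbounded for the admissible $\beta>0$.
\end{itemize}
The first two are quadratic in $\nabla\theta$ with coefficients that blow up as $\sigma-\rho\to0$. The first is never small: its coefficient is at least $C$. So neither term can be written as $\kappa_0E(\sigma)$ with $\kappa_0<1$, and the iteration lemma does not apply.

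No bookkeeping can repair this. Take $u=v=0$ and $\theta=x_1$. Your tested energy identity holds, every Poincar\'e inequality holds, and the hypotheses on $u,v$ hold, yet $\nabla\theta\neq0$. Since nothing is assumed on $\theta$, the only thing that rules out such $\theta$ is the $v$-equation. You use that equation only for the cancellation of the coupling terms.

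\textbf{The missing ingredient is the paper's Lemma \ref{Lem2.5},} an $L^2$ oscillation bound taken from Cho--Yang. From the $v$-equation one writes $\nabla\theta=\Delta v-(u\cdot\nabla)v-(v\cdot\nabla)u$. A negative-norm estimate on an annulus $D=B_{L\rho}\setminus B_\rho$ then gives
\[
\|\theta-(\theta)_D\|_{L^2(D)}\le C\|u\|_{L^3(D)}\Big(\|\nabla v\|_{L^2(D)}+\rho^{\frac12-\frac3q}\|v\|_{L^q(D)}\Big)+C\|\nabla v\|_{L^2(D)}.
\]
The right-hand side contains no $\nabla\theta$ and no factor $\rho$ in front of $\|\nabla v\|$.

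With this bound, the paper handles the three bad terms as follows.
\begin{itemize}
\item $\frac{C}{(t-s)^2}\|\Theta\|^2_{L^2}$ and $\frac{C}{t-s}\|v\|_{L^2}\|\Theta\|_{L^2}$ are controlled by $u$, $v$ and $\nabla v$ alone.
\item In the cubic term, $\|\Theta\|^2_{L^{2p'}}\le\|\Theta\|_{L^2}^{2-\frac3p}\|\Theta\|_{L^6}^{\frac3p}$. Hence $\nabla\theta$ enters only to the power $\frac3p<2$, and Young's inequality absorbs it.
\end{itemize}

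This is also exactly where the hypothesis on $Y'_{2,\gamma}$ and the restriction $\gamma<\gamma_p$ enter. Terms such as $\frac{C}{(t-s)^2}(1+\|u\|^2_{L^3})\|\nabla v\|^2_{L^2}$ are split as $\|\nabla v\|^{2-\varepsilon}\cdot(\cdots)\|\nabla v\|^{\varepsilon}$. This leaves remainders controlled by $R^{\gamma}$.

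Once this lemma is in place, your Bogovskii treatment of the pressure and your two-stage conclusion (bounded energy, then vanishing tails) match the paper.
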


Roughly speaking, we can improve Theorem \ref{main1} by weakening the growth conditions to accommodate logarithmic factors.
Since the upper bound \(\gamma_p\) for \(\gamma\) is not attainable and equality cannot be achieved in \eqref{ine1.6}, the logarithmic refinement to the growth rate of \(\nabla v\) become redundant. This is because the power-law growth \(R^{\gamma+\varepsilon}\) dominates the logarithmic-corrected growth \(R^{\gamma}(\ln R)^\nu\) for sufficiently large $R$.
Note that the notations $X_{p,\alpha,\lambda}(R)$ and $Y_{q,\beta,\mu}(R)$ used in Theorem \ref{main2} are also defined in \eqref{nota}.

\begin{Thm}\label{main2}
Let $(u,\pi,v,\theta)$ be a smooth solution of \eqref{equ1.1}. Suppose the parameters $p$, $q$, $\alpha$, $\beta$, $\gamma$, $\lambda$, $\mu$ satisfy Assumptions \ref{a1.1}, \ref{a1.2}, \ref{a1.3} and \ref{a1.4}.
Furthermore, assume that one of the following conditions holds
\begin{align*}
&\mathrm{(B1)}\;\limsup\limits_{R\rightarrow+\infty}\left[X_{p,\alpha,\lambda}(R)+Y_{q,\beta,\mu}(R)+Y'_{2,\gamma}(R)\right]<+\infty,\text{ with }p\in\left(\frac{3}{2},3\right),\\
&\mathrm{(B2)}\;\lim\limits_{R\rightarrow+\infty}X_{p,\alpha}(R)=0,\;\limsup\limits_{R\rightarrow+\infty}\left[Y_{q,\beta,\mu}(R)+Y'_{2,\gamma}(R)\right]<+\infty,\text{ with }p=3.
\end{align*}
Then $u=v=0$ and $\theta$ is a constant.
\end{Thm}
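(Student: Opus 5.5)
The plan is to rerun the energy argument behind Theorem \ref{main1}, but to keep track of logarithms exactly and to close with an ODE comparison instead of a direct limit.

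\textbf{Step 1: localized energy inequality.} Take a cutoff $\phi_R$ equal to $1$ on $B_R$, supported in $B_{2R}$, with $|\nabla\phi_R|\lesssim R^{-1}$. Test the first equation of \eqref{equ1.1} with $u\phi_R^2$ and the second with $v\phi_R^2$, and add. The structure of \eqref{equ1.1} makes the terms $\divg(v\otimes v)\cdot u$ and $(v\cdot\nabla)u\cdot v$ cancel up to cutoff errors. The term $\nabla\theta\cdot v$ becomes $-(\theta-\bar\theta_R)\divg(v\phi_R^2)$, where $\bar\theta_R$ is the average of $\theta$ over the annulus. Pressure is handled through $\pi-\bar\pi_R$, estimated by Calder\'on--Zygmund as $\|u\|_{L^p}^2$-type quantities on a slightly enlarged annulus.

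\textbf{Step 2: the oscillation of $\theta$.} The third equation gives an $L^2$ bound for $\theta-\bar\theta_R$ in terms of $\|v\|$ and $R\|\nabla v\|$ on the annulus; this is the temperature-free ingredient, and it is where $Y'_{2,\gamma}$ enters.

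\textbf{Step 3: reduce to an inequality for $E(R)$.} Set $E(R)=\int_{B_R}(|\nabla u|^2+|\nabla v|^2)$. Interpolating $L^3$ norms of $u,v$ on $A_R$ between $X_{p,\alpha,\lambda}$, $Y_{q,\beta,\mu}$ and the gradient norms (Gagliardo--Nirenberg on the annulus) should give
\[
E(R)\le C\sum_i R^{a_i}(\ln R)^{b_i}\,\bigl(E(2R)-E(R)\bigr)^{\kappa_i}+\text{(lower order)},\qquad \kappa_i<1 .
\]
The exponents $a_i$ are computed from the weights. Assumption \ref{a1.2} makes every $a_i\le 0$. Assumption \ref{a1.4} makes every surviving logarithmic power $b_i$, in the borderline case $a_i=0$, at most the critical value $1-\kappa_i$.

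\textbf{Step 4: the contradiction (main obstacle).} Suppose the solution is nontrivial, so $E(R)\ge c_0>0$ for $R\ge R_0$. Two regimes arise. If $a_i<0$, or the log power is strictly subcritical, the right side is $o(1)\cdot(E(2R)-E(R))^{\kappa_i}$. This forces $E(2R)-E(R)\to\infty$ at a rate that, iterated dyadically, contradicts the polynomial upper bound $E(R)\lesssim R^{s}$ implied by (B1)/(B2); this is the iteration used for Theorem \ref{main1}. In the critical case (equality in Assumption \ref{a1.4}(i) or (iii)), I would pass to a continuous version: replace $E(2R)-E(R)$ by $R\,E'(R)$ using averaged cutoffs over $[R,2R]$. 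This yields
\[
E(R)^{1/\kappa}\lesssim (\ln R)^{b/\kappa}\, R E'(R),\qquad b/\kappa\le (1-\kappa)/\kappa .
\]
Integrating $\frac{d}{dR}E^{1-1/\kappa}\lesssim -\frac{1}{R(\ln R)^{b/\kappa}}$ from $R_0$ to $\infty$, the right side diverges exactly when $b/\kappa\le1$. This would force $E^{1-1/\kappa}$ negative, a contradiction. Hence $E\equiv0$, so $u$ and $v$ are constant. The growth bounds then force $u=v=0$, and the third equation gives $\Delta\theta=0$ and $\nabla\theta=0$ from the second, so $\theta$ is constant.

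The delicate points are twofold. First, one must match the log exponent produced by the interpolation to the threshold $1-\kappa$; the right-hand sides in Assumption \ref{a1.4} are presumably designed for this. Second, in case (B2) the smallness $X_{3,\alpha}\to0$ must absorb the pressure and cubic terms, which have $\kappa=1$, and only the $v$-terms are handled by the ODE argument.
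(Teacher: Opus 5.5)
Your overall architecture is the paper's: a localized energy, a monotone energy whose derivative dominates $R^{-1}\int_{A_R}(|\nabla u|^2+|\nabla v|^2+\cdots)$, and a contradiction from $\int^{\infty}\frac{d\rho}{\rho\ln\rho}=\infty$. However, two load-bearing steps fail as written.

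First, Step 1 does not close. If you test only the $u$- and $v$-equations, the term $\int\nabla\theta\cdot v\,\phi_R^2$ leaves the bulk integral $\int_{B_{2R}}(\theta-\bar\theta_R)\,\divg v\,\phi_R^2\,dx$. This integral is not supported on $A_R$. The oscillation bound of Lemma~\ref{Lem2.5} controls $\theta-\bar\theta_R$ only on the annulus, and there is no information on $\theta$ in the interior. (That bound comes from the $v$-equation, not the temperature equation, and it carries a factor $\|u\|_{L^3(A_R)}$ that your Step 2 omits.) You must also test the temperature equation with $\Theta\phi_R^2$, where $\Theta=\theta-\bar\theta_R$. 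The two bulk terms then cancel exactly, $|\nabla\theta|^2$ enters the energy, and you acquire the annular terms $\int|\Theta|^2u\cdot\nabla\phi_R^2$, $\int\Theta\,\nabla\Theta\cdot\nabla\phi_R^2$ and $\int\Theta\,v\cdot\nabla\phi_R^2$, which must be estimated. Likewise, Calder\'on--Zygmund is nonlocal, so $\pi-\bar\pi_R$ on an annulus is not controlled by $u,v$ on an enlarged annulus. The paper never sees the pressure: it tests with $u\eta-w$, where $w$ is the Bogovskii solution of $\divg w=u\cdot\nabla\eta$ in $A_R$.

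Second, your claim that every $\kappa_i<1$ is unjustified, and this is exactly where the paper's key extra ingredient enters. The cutoff terms, in particular $R^{-1}\|\nabla\theta\|_{L^2(A_R)}\|\Theta\|_{L^2(A_R)}$ after Poincar\'e, naturally produce
$K_1=C(\|\nabla u\|_{L^2(A_R)}^2+\|\nabla v\|_{L^2(A_R)}^2+\|\nabla\theta\|_{L^2(A_R)}^2)\le C\,RE'(R)$.
This is linear in $RE'$ with no decaying factor. An inequality $E\le C\,RE'$ only gives $E(R)\gtrsim R^{1/C}$, which contradicts no polynomial bound. The paper resolves this with Lemma~\ref{Lem4.3}. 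Feeding the log-corrected hypotheses into the iteration estimate \eqref{ine3.44} of Theorem~\ref{main1} yields the polylogarithmic a priori bound $f(2R)\le C(\ln R)^{\tau_5}$. Hence
$K_1\le[Cf(2R)]^{\frac{1}{1+\tau_5}}[CRE'(R)]^{\frac{\tau_5}{1+\tau_5}}\le C[R\ln R\,E'(R)]^{\frac{\tau_5}{1+\tau_5}}$.
The same bound handles $K_6$ and $K_{711}$ when $\gamma>0$. It also makes $K_{713}$, $K_{72}$, $K_{723}$ decay, so they are absorbed by $\frac{1}{16}E(R_2)\le\frac{1}{16}E(R)$.

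Your ``polynomial bound implied by (B1)/(B2)'' is not immediate either, since $\nabla u$ and $\nabla\theta$ carry no growth hypothesis. Proving it needs this same Caccioppoli--iteration argument, which in fact gives the polylog bound.

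Finally, the ODE step needs $b_i\le\kappa_i$, not $b_i\le1-\kappa_i$. That is what Assumption~\ref{a1.4} encodes: the right side of (i) is the exponent $\tau_6$ of $K_{711}$. No two-regime split is needed. Bound every term by $\frac{1}{16}E(R)$ or by $C[R\ln R\,E'(R)]^{\kappa_i}$, then use Young to reach a single $\tau<1$.
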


As corollaries of Theorem \ref{main1}, we establish the following two interesting Liouville-type results under the integrability conditions imposed only on $(u,\nabla v)$ or $(\nabla u,\nabla v)$.
\begin{Cor}\label{Cor1.1}
Let $(u,\pi,v,\theta)$ be a smooth solution of \eqref{equ1.1}. Assume that $v$ satisfies the vanishing condition at infinity
\begin{align*}
\lim_{|x|\rightarrow+\infty}v(x)=0.
\end{align*}
If $u\in L^p(\mathbb{R}^3)$ with $\frac{3}{2}<p\leq3$ and $\nabla v\in L^2(\mathbb{R}^3)$, then $u=v=0$ and $\theta$ is a constant.
\end{Cor}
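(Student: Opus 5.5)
We reduce Corollary \ref{Cor1.1} to Theorem \ref{main1} by checking that the integrability hypotheses imply one of the growth conditions \eqref{c1.3} or \eqref{c1.4}, with suitable parameters. The choice is $\alpha=0$, $\gamma=0$, $q=2$, and $\beta$ as large as Assumption \ref{a1.1} allows subject to Assumption \ref{a1.2}.

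\textbf{Terms involving $u$ and $\nabla v$.} Since $u\in L^p(\mathbb{R}^3)$, absolute continuity of the integral gives $X_{p,0}(R)\le \|u\|_{L^p(|x|>R)}\to0$ as $R\to+\infty$. Likewise $Y'_{2,0}(R)\le\|\nabla v\|_{L^2(|x|>R)}\to0$.

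\textbf{The term involving $v$.} Here $v$ itself is not assumed integrable, so we use the vanishing condition together with $\nabla v\in L^2$. Because $v\to0$ at infinity and $\nabla v\in L^2(\mathbb{R}^3)$, the standard homogeneous Sobolev argument gives $v\in L^6(\mathbb{R}^3)$ with $\|v\|_{L^6}\le C\|\nabla v\|_{L^2}$. To justify this, apply the Sobolev inequality to $(|v|-\varepsilon)_+$, which has compact support for each $\varepsilon>0$, and then let $\varepsilon\to0$ using monotone convergence.

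Taking $q=2$ and using Hölder on $A_R$ (whose volume is $\sim R^3$):
\[
Y_{2,\beta}(R)\le CR^{-\beta}\Big(\int_{A_R}|v|^2\Big)^{1/2}\le CR^{-\beta}R^{3(\frac12-\frac16)}\|v\|_{L^6(A_R)}=CR^{1-\beta}\|v\|_{L^6(A_R)}.
\]
With $\beta=1$, which is admissible since $\beta\le \frac3q-\frac12=1$, we get $Y_{2,1}(R)\to0$.

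\textbf{Checking Assumptions \ref{a1.1} and \ref{a1.2}.} With $q=2$, $\alpha=0$, $\beta=1$, $\gamma=0$:
\begin{itemize}
\item $\frac{6-3q}{6-q}=0$ and $\frac{6-2p}{6-p}<1$ for $p>\frac32$, so we are in case (i) of Assumption \ref{a1.2}.
\item Condition \eqref{ine1.3} reads $\frac{2q}{6-q}\beta=\frac{4}{4}\cdot1=1\le1$, so it holds.
\item Assumption \ref{a1.1} holds because $p\in(\frac32,3]$, $q=2\in[1,2]$, $\alpha=0$, $\beta=1=\frac3q-\frac12$, and $\gamma=0<\gamma_p$.
\end{itemize}

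\textbf{Conclusion.} If $p\in(\frac32,3)$, all three quantities in \eqref{c1.3} tend to $0$, so the $\liminf$ is finite. If $p=3$, we have $\liminf X_{3,0}(R)=0$ and $\limsup[Y_{2,1}+Y'_{2,0}]=0<+\infty$, which is \eqref{c1.4}. In both cases Theorem \ref{main1} applies and gives $u=v=0$ with $\theta$ constant.

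The only delicate point is obtaining the $L^6$ control of $v$ from $\nabla v\in L^2$ plus the vanishing condition. If the endpoint $\beta=1$ causes difficulty, a fallback is to take $\beta$ slightly smaller, but that is only possible if \eqref{ine1.3} is not tight. Here it is tight but allowed (the inequality is non-strict in case (i)), so $\beta=1$ should suffice.
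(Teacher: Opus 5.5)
Your proposal is correct and follows the paper's proof. You apply Theorem \ref{main1} with $\alpha=\gamma=0$ and $\beta=\frac{3}{q}-\frac{1}{2}$, specializing to $q=2$, $\beta=1$, where \eqref{ine1.3} holds with equality. As in the paper, you get $v\in L^6(\mathbb{R}^3)$ from the vanishing condition and $\nabla v\in L^2$, then bound $Y_{q,\beta}(R)\le C\|v\|_{L^6(A_R)}$ by H\"older.

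The only difference is how you justify $v\in L^6$: you truncate with $(|v|-\varepsilon)_+$, whereas the paper uses Lemma \ref{Lem3.8} (Poincar\'e--Sobolev on balls, $(v)_{B_R}\to0$, and Fatou). Both routes are equally valid.

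Your closing ``fallback'' remark is unnecessary and points the wrong way. A smaller $\beta$ makes $Y_{2,\beta}$ larger and harder to control, not easier.
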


\begin{Cor}\label{Cor1.2}
Let $(u,\pi,v,\theta)$ be a smooth solution of \eqref{equ1.1}. Assume that $u$ and $v$ satisfy the vanishing condition at infinity
\begin{align*}
\lim_{|x|\rightarrow+\infty}u(x)=\lim_{|x|\rightarrow+\infty}v(x)=0.
\end{align*}
If $\nabla u\in L^p(\mathbb{R}^3)$ with $1\leq p\leq\frac{3}{2}$ and $\nabla v\in L^2(\mathbb{R}^3)$, then $u=v=0$ and $\theta$ is a constant.
\end{Cor}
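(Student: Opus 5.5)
The plan is to reduce Corollary \ref{Cor1.2} to Corollary \ref{Cor1.1}. The hypotheses on $v$ in the two corollaries coincide: $v\to0$ at infinity and $\nabla v\in L^2(\mathbb{R}^3)$. So it suffices to show that $u\in L^{s}(\mathbb{R}^3)$ for some $s\in(\frac32,3]$. Corollary \ref{Cor1.1} then yields $u=v=0$ and that $\theta$ is constant.

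\textbf{Step 1 (Sobolev embedding for homogeneous spaces).} Let $1\le p\le\frac32<3$ and $p^*=\frac{3p}{3-p}\in[\frac32,3]$. I will invoke the standard result for functions with gradient in $L^p$ (e.g.\ Galdi's book, Theorem II.6.1). For $u\in L^1_{\rm loc}$ with $\nabla u\in L^p(\mathbb{R}^3)$ and $1\le p<3$, it gives a constant vector $c$ such that
\begin{align*}
\|u-c\|_{L^{p^*}(\mathbb{R}^3)}\le C\|\nabla u\|_{L^p(\mathbb{R}^3)}.
\end{align*}
Since $u-c\in L^{p^*}$, the function $u-c$ cannot stay bounded away from zero on a set of infinite measure. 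The assumption $\lim_{|x|\to\infty}u(x)=0$ therefore forces $c=0$, and hence $u\in L^{p^*}(\mathbb{R}^3)$.

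\textbf{Step 2 (the endpoint $p^*=\frac32$).} If $1<p\le\frac32$, then $p^*\in(\frac32,3]$ and we are already in the range of Corollary \ref{Cor1.1}. The case $p=1$ gives only $p^*=\frac32$, which is excluded there. To handle it, note that $u$ is smooth and tends to $0$ at infinity, so $u\in L^\infty(\mathbb{R}^3)$. Interpolating between $L^{3/2}$ and $L^\infty$ gives $u\in L^{s}$ for every $s\in[\frac32,\infty]$; in particular $u\in L^2(\mathbb{R}^3)$, and $2\in(\frac32,3]$. The same interpolation could be used in every case, choosing for instance $s=2$ throughout.

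\textbf{Step 3.} Apply Corollary \ref{Cor1.1} with this exponent $s$. This gives $u=v=0$ and $\theta$ constant.

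The only delicate point is Step 1: identifying the additive constant in the homogeneous Sobolev embedding as zero via the pointwise vanishing condition. This is the step where the assumption $\lim_{|x|\to\infty}u=0$ is used, and it is also used to obtain boundedness at the endpoint $p=1$.
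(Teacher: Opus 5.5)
Your proof is correct and is essentially the paper's argument, which proves the vanishing-constant homogeneous embedding directly as Lemma~\ref{Lem3.8} (ball averages tend to zero, then Fatou and Poincar\'e--Sobolev) rather than citing Galdi, and which interpolates to $L^3$ instead of $L^2$ at $p=1$. One small slip is in your closing aside: $s=2$ does not work for every $p$, since for $\frac65<p\le\frac32$ one has $p^*>2$ and $L^{p^*}\cap L^\infty\not\subset L^2$, so take $s=3$ (or $s=p^*$) uniformly instead.
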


We give some remarks on our results.
\begin{Rem}
Since we impose the conditions of finite Dirichlet energy and vanishing at infinity on $v$, Corollaries \ref{Cor1.1} and \ref{Cor1.2} are closely related to the Liouville-type problem proposed by Galdi.
\end{Rem}

\begin{Rem}
It is straightforward to verify that the triple \((\alpha,\beta,\gamma)\) defined in \eqref{abc} satisfies Assumptions \ref{a1.1} and \ref{a1.2} as a particular instance.
Consequently, the main conclusions established by Cho and Yang in \cite{CY26} can be regarded as a special corollary of Theorem \ref{main1}.
Compared with Cho-Yang's results, we achieve substantial improvements from three perspectives.
\begin{itemize}
\item[(i)] We broaden the admissible parameter domains of \(\alpha\) and \(\beta\). In \cite{CY26}, these two parameters are restricted to the fixed form \((\alpha,\beta)=\left(\frac{2}{p}-\frac{1}{3},\frac{2}{q}-\frac{1}{3}\right)\). The enhanced parameter flexibility enables the validity of Corollaries \ref{Cor1.1} and \ref{Cor1.2}.
\item[(ii)] We weaken the constraint on the growth exponent corresponding to \(\nabla v\): the original restriction \(\gamma=0\) is relaxed to \(\gamma\in[0,\gamma_p)\). In particular, \(\gamma\) is allowed to take arbitrarily large values when \(p=3\).
\item[(iii)] We sharpen the growth rates on $u$ and $v$ via logarithmic corrections.
\end{itemize}
\end{Rem}

Finally, we outline the arrangement of the rest of this paper. In Section \ref{sec2}, we recall key preliminary tools, including properties of the Bogovskii operator, the Poincar\'{e}-Sobolev inequality, the classical Poincar\'{e} inequality, a useful iteration lemma, and a crucial $L^2$ estimate for the oscillation of $\theta$ over the annular region. Section \ref{sec3} shows the proofs of Theorem \ref{main1}, Corollaries \ref{Cor1.1} and \ref{Cor1.2}, whereas Section \ref{sec4} is devoted to proving Theorem \ref{main2}.
Throughout this article, we use $C$ to denote a finite inessential constant which may be different from line to line.
\v0.1in
\section{Preliminaries}\label{sec2}

The first lemma is the existence and boundedness of the Bogovskii map, which is used to deal with the pressure term $\nabla \pi$.
\begin{Lem}\label{Lem2.1}$($See {\rm \cite[Lemma 1]{Tsai21}} or {\rm\cite[Theorem III.3.3]{Galdi}}$)$
Let $\Omega$ be a bounded Lipschitz domain in $\mathbb{R}^{3}$. Denote $L_{0}^p(\Omega):=\{g\in L^p(\Omega):\int_\Omega gdx=0\}$ with $1<p<+\infty$. There exists a linear operator
\begin{equation*}
\mathrm{Bog}:L_{0}^p(\Omega)\rightarrow W_{0}^{1,p}(\Omega),
\end{equation*}
such that for any  $g\in L_{0}^p(\Omega),w=\mathrm{Bog}g$ is a vector field satisfying
\begin{equation*}
w\in W_{0}^{1,p}(\Omega), \hspace{0.3cm}\mathrm{div}w=g,\hspace{0.3cm}\|\nabla w\|_{L^p(\Omega)}\leq C_{\mathrm{Bog}}(\Omega,p)\|g\|_{L^p(\Omega)},
\end{equation*}
where the constant $C_{\mathrm{Bog}}(\Omega,p)$  is independent of $g$. By using a rescaling argument, we see
$$C_{\mathrm{Bog}}(R\Omega,p)=C_{\mathrm{Bog}}(\Omega,p),\text{ where $R\Omega=\{Rx:x\in \Omega\}$ and $R>0$.}$$
\end{Lem}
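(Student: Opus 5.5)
The statement just given is Lemma \ref{Lem2.1}, a classical result about the Bogovskii operator. I would prove it in two parts: first the existence and the $L^p$ bound on a fixed domain, then the scale invariance of the constant.

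\emph{Existence and bound on a fixed domain.} I would not reprove this in full but follow Bogovskii's explicit construction. First treat a domain $\Omega$ that is star-shaped with respect to every point of a ball $B\subset\Omega$. Fix $\omega\in C_c^\infty(B)$ with $\int\omega=1$ and set
\begin{equation*}
w(x)=\int_\Omega g(y)\,(x-y)\int_1^\infty \omega\bigl(y+s(x-y)\bigr)s^{2}\,ds\,dy .
\end{equation*}
A direct computation, which uses $\int_\Omega g=0$, gives $\divg w=g$. The support of $w$ lies in $\Omega$. Writing $\nabla w$ as a Calder\'on--Zygmund singular integral of $g$ plus a bounded term, the Calder\'on--Zygmund theorem yields $\|\nabla w\|_{L^p}\le C\|g\|_{L^p}$ for $1<p<\infty$, and density then gives $w\in W^{1,p}_0(\Omega)$. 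A general bounded Lipschitz domain is a finite union of such star-shaped pieces $\Omega_1,\dots,\Omega_N$. I would decompose $g=\sum_i g_i$ with $g_i\in L^p_0(\Omega_i)$ and $\|g_i\|_{L^p}\le C\|g\|_{L^p}$, using a partition of unity together with mean-value corrections propagated along a chain of overlapping pieces, and then add the solutions obtained on the pieces. This is exactly the argument in \cite[Theorem III.3.3]{Galdi}, and the main obstacle, namely the singular-integral bound, is taken from there.

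\emph{Scaling.} Given $g\in L^p_0(R\Omega)$, set $\tilde g(y)=R\,g(Ry)$ on $\Omega$. It has zero mean, so $\tilde w=\mathrm{Bog}_\Omega\tilde g$ is defined, and I put $w(x)=\tilde w(x/R)$. Then $\divg w(x)=R^{-1}(\divg\tilde w)(x/R)=g(x)$, and $w$ vanishes on $\partial(R\Omega)$. For the norms,
\begin{equation*}
\|\nabla w\|_{L^p(R\Omega)}=R^{-1}R^{3/p}\|\nabla\tilde w\|_{L^p(\Omega)},\qquad
\|\tilde g\|_{L^p(\Omega)}=R\,R^{-3/p}\|g\|_{L^p(R\Omega)}.
\end{equation*}
Combining these with the bound on $\Omega$ gives
\begin{equation*}
\|\nabla w\|_{L^p(R\Omega)}\le C_{\mathrm{Bog}}(\Omega,p)\,R^{-1+3/p}\cdot R^{1-3/p}\|g\|_{L^p(R\Omega)}=C_{\mathrm{Bog}}(\Omega,p)\|g\|_{L^p(R\Omega)}.
\end{equation*}
So the constant is independent of $R$, and the same argument run from $R\Omega$ back to $\Omega$ gives the reverse inequality, hence equality.
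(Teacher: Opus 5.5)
Your proposal is correct and follows the same route as the paper, which cites \cite[Theorem III.3.3]{Galdi} (or \cite[Lemma 1]{Tsai21}) for existence and the $L^p$ bound and then invokes a rescaling argument for the invariance of the constant. Your dilation $\tilde g(y)=R\,g(Ry)$, $w(x)=\tilde w(x/R)$ correctly supplies that rescaling step: the factors $R^{-1+3/p}$ and $R^{1-3/p}$ cancel, and running the argument from $R\Omega$ back to $\Omega$ gives equality of the (optimal) constants.
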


The second lemma is the so-called Poincar\'{e}-Sobolev inequality. Let $(g)_{\Omega}$ represent the mean value of the function $g$ over $\Omega$.
\begin{Lem}\label{Lem2.2}$($See {\rm\cite[Theorem 3.15]{Giusti}}$)$
Let $\Omega\subset\mathbb{R}^n$ be a bounded Lipschitz domain. Assume $1\leq p<n$. Then there exists a positive constant $C(n,p,\Omega)$ such that
$$\|g-(g)_{\Omega}\|_{L^{\frac{np}{n-p}}(\Omega)}\leq C(n,p,\Omega)\|\nabla g\|_{L^{p}(\Omega)}\text{ for every $g\in W^{1,p}(\Omega)$.}$$
By using a rescaling argument, we see that the constant $C(n,p,R\Omega)$ does not depend on $R$, i.e.,  $C(n,p,R\Omega)=C(n,p,\Omega)$.
\end{Lem}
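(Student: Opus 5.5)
The statement is the standard Poincaré–Sobolev inequality, cited from Giusti. What needs an argument is the scaling claim $C(n,p,R\Omega)=C(n,p,\Omega)$. I will first recall the inequality on a fixed domain and then prove scale invariance by a change of variables.

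For the inequality on $\Omega$ itself, I will follow the classical two-step route. Since $\Omega$ is a bounded Lipschitz domain, it is an extension domain and Rellich–Kondrachov gives a compact embedding $W^{1,p}(\Omega)\hookrightarrow L^p(\Omega)$. A contradiction and compactness argument then yields the Poincaré inequality
$$\|g-(g)_\Omega\|_{L^p(\Omega)}\le C\|\nabla g\|_{L^p(\Omega)}.$$
Suppose it fails. Normalize a sequence $g_k$ with zero mean, $\|g_k\|_{L^p}=1$ and $\|\nabla g_k\|_{L^p}\to0$. Extract a strongly convergent subsequence in $L^p$. The limit has zero gradient, so it is constant on the connected set $\Omega$, and its mean is zero, so it vanishes. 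This contradicts $\|g_k\|_{L^p}=1$. Next, apply the Sobolev embedding $W^{1,p}(\Omega)\hookrightarrow L^{np/(n-p)}(\Omega)$, valid via extension to $\mathbb{R}^n$ and Gagliardo–Nirenberg, to $g-(g)_\Omega$. Its $W^{1,p}$ norm is bounded by $C\|\nabla g\|_{L^p}$ thanks to the Poincaré step. The only delicate points are the extension operator and the connectedness of $\Omega$, which is implicit in the word ``domain''. This is where I expect the only real obstacle, and it is exactly what the citation to Giusti covers.

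For the scaling, let $g\in W^{1,p}(R\Omega)$ and set $\tilde g(y)=g(Ry)$ for $y\in\Omega$. Then $(\tilde g)_\Omega=(g)_{R\Omega}$ and $\nabla\tilde g(y)=R\,(\nabla g)(Ry)$. Changing variables with $x=Ry$, $dx=R^n\,dy$, gives
$$\|g-(g)_{R\Omega}\|_{L^{\frac{np}{n-p}}(R\Omega)}=R^{\frac{n-p}{p}}\|\tilde g-(\tilde g)_\Omega\|_{L^{\frac{np}{n-p}}(\Omega)},$$
$$\|\nabla g\|_{L^p(R\Omega)}=R^{\frac{n}{p}-1}\|\nabla\tilde g\|_{L^p(\Omega)}.$$
The two exponents $\frac{n-p}{p}$ and $\frac np-1$ coincide. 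Hence the inequality for $\tilde g$ on $\Omega$ transfers to $g$ on $R\Omega$ with the same constant. Applying the same argument with $1/R$ gives the reverse inequality between the optimal constants, so $C(n,p,R\Omega)=C(n,p,\Omega)$.
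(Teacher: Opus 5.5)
Your proposal is correct and follows the paper's approach. The paper simply cites Giusti for the inequality on a fixed Lipschitz domain and asserts scale invariance ``by a rescaling argument''. Your change of variables $x=Ry$ is exactly that omitted computation: $(\tilde g)_\Omega=(g)_{R\Omega}$ and both norms scale by $R^{(n-p)/p}$. Your compactness-plus-Sobolev sketch for the fixed domain goes beyond what the paper provides, and it is sound, including the case $p=1$.
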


The third lemma is the so-called Poincar\'{e} inequality.
\begin{Lem}\label{Lem2.3}$($See {\rm\cite[p.292, Theorem 1]{Evans}}$)$
Let $\Omega\subset\mathbb{R}^n$ be a bounded $C^1$ domain. Assume $1\leq p\leq +\infty$. Then there exists a positive constant $C(n,p,\Omega)$ such that
$$\|g-(g)_{\Omega}\|_{L^p(\Omega)}\leq C(n,p,\Omega)\|\nabla g\|_{L^{p}(\Omega)}\text{ for every $g\in W^{1,p}(\Omega)$.}$$
By using a rescaling argument, we see that $C(n,p,R\Omega)=RC(n,p,\Omega)$.
\end{Lem}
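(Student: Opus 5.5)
The statement is the classical Poincaré inequality on a bounded $C^1$ domain together with the scaling law for its constant, so only the scaling needs an argument; the inequality itself is taken from the cited reference. I would argue as follows.

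Fix $\Omega$ and $1\le p\le\infty$. Given $g\in W^{1,p}(R\Omega)$, define $\tilde g(y)=g(Ry)$ for $y\in\Omega$. Then $\tilde g\in W^{1,p}(\Omega)$ and $\nabla\tilde g(y)=R(\nabla g)(Ry)$. The mean values match: $(\tilde g)_\Omega=(g)_{R\Omega}$, because the change of variables $x=Ry$ multiplies both the integral and the volume by $R^n$.

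For $p<\infty$, the same change of variables gives
\[
\|g-(g)_{R\Omega}\|_{L^p(R\Omega)}=R^{n/p}\|\tilde g-(\tilde g)_\Omega\|_{L^p(\Omega)},
\]
\[
\|\nabla\tilde g\|_{L^p(\Omega)}=R\cdot R^{-n/p}\|\nabla g\|_{L^p(R\Omega)}.
\]
Apply the inequality on $\Omega$ to $\tilde g$ and combine:
\[
\|g-(g)_{R\Omega}\|_{L^p(R\Omega)}\le R^{n/p}C(n,p,\Omega)R^{1-n/p}\|\nabla g\|_{L^p(R\Omega)}=R\,C(n,p,\Omega)\|\nabla g\|_{L^p(R\Omega)}.
\]
For $p=\infty$ the powers $R^{n/p}$ are absent and the same computation applies.

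This shows $C(n,p,R\Omega)\le R\,C(n,p,\Omega)$. Running the argument from $R\Omega$ back to $\Omega$ with scale factor $1/R$ gives the reverse inequality. Hence, with $C(n,p,\cdot)$ understood as the best constant, $C(n,p,R\Omega)=R\,C(n,p,\Omega)$.

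There is no real obstacle here. The only point needing care is that the mean value commutes with the dilation, which the change of variables above confirms. The factor $R$, in contrast to the scale invariance in Lemma \ref{Lem2.2}, comes from the mismatch of one derivative between the two sides when the exponents on both sides are equal.
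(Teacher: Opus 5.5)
Your proposal is correct and takes the same route as the paper: the paper simply cites Evans for the inequality and asserts $C(n,p,R\Omega)=RC(n,p,\Omega)$ ``by a rescaling argument,'' and your change of variables $\tilde g(y)=g(Ry)$ is that argument written out. Your reading of the identity as a statement about the best constant, with equality obtained by dilating in both directions, is the right interpretation.
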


Next, we show the following standard iteration lemma, which is a generalization of \cite[Lemma 3.1]{Giaquinta}, and can be found in \cite[Lemma 2.1]{CL24}.
\begin{Lem}\label{Lem2.4}
Let $f(t)$ be a non-negative bounded function on $\left[r_0, r_1\right] \subset \mathbb{R}^{+}$. If there are non-negative constants $a_i, b_i,\alpha_i$, $i=1,2,\cdots,m$, and a parameter $\kappa_0 \in[0,1)$ such that for any $r_0 \leq s<t \leq r_1$, it holds that
$$
f(s) \leq \kappa_0 f(t)+\sum_{i=1}^m\left(\frac{a_i}{(t-s)^{\alpha_i}}+b_i\right),
$$
then
$$
f(s) \leq C\sum_{i=1}^m\left(\frac{a_i}{(t-s)^{\alpha_i}}+b_i\right),
$$
where $C$ is a constant depending on $\alpha_1,\alpha_2,\cdots,\alpha_m$ and $\kappa_0$.
\end{Lem}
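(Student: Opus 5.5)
The statement to prove is Lemma \ref{Lem2.4}. I would follow the classical argument of Giaquinta \cite[Lemma 3.1]{Giaquinta}: iterate the hypothesis along a geometric sequence of radii, and use the boundedness of $f$ to make the accumulated $\kappa_0^k f(t_k)$ term vanish.

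Fix $r_0\le s<t\le r_1$. Choose a parameter $\tau\in(0,1)$, to be fixed later, and define
\[
t_0=s,\qquad t_{k+1}=t_k+(1-\tau)\tau^k(t-s).
\]
This gives an increasing sequence in $[s,t]$ with $t_k\to t$ and $t_{k+1}-t_k=(1-\tau)\tau^k(t-s)$. Applying the hypothesis to each pair $t_k<t_{k+1}$ gives
\[
f(t_k)\le \kappa_0 f(t_{k+1})+\sum_{i=1}^m\Big(\frac{a_i}{(1-\tau)^{\alpha_i}\tau^{k\alpha_i}(t-s)^{\alpha_i}}+b_i\Big).
\]
Iterating $k$ times from $f(t_0)=f(s)$ yields
\[
f(s)\le \kappa_0^k f(t_k)+\sum_{j=0}^{k-1}\kappa_0^j\sum_{i=1}^m\Big(\frac{a_i(1-\tau)^{-\alpha_i}\tau^{-j\alpha_i}}{(t-s)^{\alpha_i}}+b_i\Big).
\]

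Next fix $\tau$ so that $\kappa_0\tau^{-\alpha_i}<1$ for every $i$. Taking $\tau^{\alpha}=(1+\kappa_0)/2$ with $\alpha=\max_i\alpha_i$ works: then $\kappa_0\tau^{-\alpha_i}\le \kappa_0\tau^{-\alpha}=2\kappa_0/(1+\kappa_0)<1$. If some $\alpha_i=0$ the corresponding term is already geometric in $\kappa_0$, and if all $\alpha_i=0$ any $\tau\in(0,1)$ will do. With this choice the $j$-series converge:
\[
\sum_j\kappa_0^j\tau^{-j\alpha_i}\le \frac{1}{1-\kappa_0\tau^{-\alpha}},\qquad \sum_j\kappa_0^j=\frac{1}{1-\kappa_0}.
\]

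Since $f$ is bounded on $[r_0,r_1]$ and $\kappa_0<1$, the term $\kappa_0^k f(t_k)\le \kappa_0^k\sup f\to0$. Letting $k\to\infty$ gives the conclusion with
\[
C=\max_i(1-\tau)^{-\alpha_i}\big(1-\kappa_0\tau^{-\alpha}\big)^{-1}+(1-\kappa_0)^{-1},
\]
which depends only on $\kappa_0$ and the $\alpha_i$.

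The only delicate point is choosing $\tau$ so that the geometric growth $\tau^{-j\alpha_i}$ is beaten by $\kappa_0^j$. Once $\tau$ is chosen as above, the rest is routine summation.
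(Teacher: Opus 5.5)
Your proof is correct. The geometric radii $t_{k+1}-t_k=(1-\tau)\tau^k(t-s)$ with $\kappa_0\tau^{-\max_i\alpha_i}<1$, followed by letting $k\to\infty$ using the boundedness of $f$, is exactly the classical iteration of \cite[Lemma 3.1]{Giaquinta}, which is the argument the paper relies on; the paper gives no proof of its own and instead cites \cite[Lemma 2.1]{CL24}.
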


Finally, we introduce an important $L^2$ estimate for the oscillation of $\theta$ over the annular region, which plays a crucial role in eliminating the assumption on the temperature.
\begin{Lem}\label{Lem2.5}
Let $(u,\pi,v,\theta)$ be a smooth solution of \eqref{equ1.1}. For each $L>1$, $q\geq1$ and $\rho>0$, there exists a positive constant $C_L$  such that
\begin{align}\label{ine2.1}
\|\theta - (\theta)_{E}\|_{L^2(E)} \leq C_L \|u\|_{L^3(E)} \left( \|\nabla v\|_{L^2(E)} + \rho^{\frac{1}{2}-\frac{3}{q}} \|v\|_{L^q(E)} \right) + C_L\|\nabla v\|_{L^2(E)},
\end{align}
where $E= B_{L\rho}\backslash B_\rho$.
\end{Lem}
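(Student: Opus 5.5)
The plan is to read off $\nabla\theta$ from the second equation of \eqref{equ1.1} and control $\theta-(\theta)_E$ by duality with the Bogovskii operator. Write
\begin{equation*}
\nabla\theta=\Delta v-(u\cdot\nabla)v-(v\cdot\nabla)u .
\end{equation*}
Fix $\varphi\in L^2_0(E)$ with $\|\varphi\|_{L^2(E)}\le1$ and put $w=\mathrm{Bog}\,\varphi\in W^{1,2}_0(E)$ from Lemma \ref{Lem2.1}. Since $E=\rho(B_L\setminus \overline{B_1})$, we get $\|\nabla w\|_{L^2(E)}\le C_L$ uniformly in $\rho$. Also $\int_E(\theta-(\theta)_E)\varphi\,dx=\int_E\theta\,\divg w\,dx=-\int_E\nabla\theta\cdot w\,dx$, because $w$ vanishes on $\partial E$ and $\varphi$ has mean zero. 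Taking the supremum over $\varphi$ then gives $\|\theta-(\theta)_E\|_{L^2(E)}$, so it suffices to bound $\int_E\nabla\theta\cdot w\,dx$ by the right-hand side of \eqref{ine2.1}, uniformly in $\varphi$.

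We substitute the equation and integrate by parts, with no boundary terms since $w\in W^{1,2}_0$:
\begin{equation*}
\int_E\nabla\theta\cdot w\,dx=-\int_E\nabla v:\nabla w\,dx-\int_E (u\cdot\nabla)v\cdot w\,dx-\int_E (v\cdot\nabla)u\cdot w\,dx .
\end{equation*}
The first term is bounded by $C_L\|\nabla v\|_{L^2(E)}$. For the second, Hölder with exponents $(3,2,6)$ and Sobolev for $w$ (zero trace, so the constant is scale invariant) give
\begin{equation*}
\|u\|_{L^3(E)}\|\nabla v\|_{L^2(E)}\|w\|_{L^6(E)}\le C_L\|u\|_{L^3(E)}\|\nabla v\|_{L^2(E)} .
\end{equation*}
The third term contains $\nabla u$, which must not appear in the estimate. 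Using $\divg u=0$ we write $(v\cdot\nabla)u_i=\partial_j(v_ju_i)-u_i\,\divg v$, and integrating by parts gives
\begin{equation*}
-\int_E v_j u_i\,\partial_j w_i\,dx-\int_E u_i w_i\,\divg v\,dx .
\end{equation*}
The second piece is bounded by $C_L\|u\|_{L^3}\|\nabla v\|_{L^2}$ exactly as before.

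The main obstacle is the remaining piece $\int_E v\otimes u:\nabla w\,dx$. Here $v$ must be measured in a way that produces $\rho^{\frac12-\frac3q}\|v\|_{L^q(E)}$, and $\nabla w$ is only in $L^2$. We split $v=(v-(v)_E)+(v)_E$. For the oscillation part, Lemma \ref{Lem2.2} gives $\|v-(v)_E\|_{L^6(E)}\le C_L\|\nabla v\|_{L^2(E)}$, and Hölder with exponents $(6,3,2)$ yields $C_L\|u\|_{L^3}\|\nabla v\|_{L^2}$. For the mean part, $|(v)_E|\le|E|^{-1/q}\|v\|_{L^q(E)}\le C_L\rho^{-3/q}\|v\|_{L^q(E)}$, and
\begin{equation*}
\Big|\int_E u\otimes (v)_E:\nabla w\,dx\Big|\le|(v)_E|\,\|u\|_{L^2(E)}\|\nabla w\|_{L^2(E)},\qquad \|u\|_{L^2(E)}\le C_L\rho^{1/2}\|u\|_{L^3(E)} .
\end{equation*}
Together these give $C_L\rho^{\frac12-\frac3q}\|u\|_{L^3(E)}\|v\|_{L^q(E)}$. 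Collecting all the terms and taking the supremum over $\varphi$ yields \eqref{ine2.1}, with $C_L$ depending only on $L$ (and on $q$ through the volume factor) and not on $\rho$, since every constant used is scale invariant on the dilated annulus $E$.
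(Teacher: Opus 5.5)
Your argument is correct, but it is organized differently from the paper's. The paper does not carry out the duality estimate itself. It quotes \cite[Proposition 3]{CY26}, which already gives
\[
\|\theta-(\theta)_E\|_{L^2(E)}\le C_L\|u\|_{L^3(E)}\bigl(\|\nabla v\|_{L^2(E)}+\rho^{-1}\|v\|_{L^2(E)}\bigr)+C_L\|\nabla v\|_{L^2(E)}.
\]
Its only new step is the conversion $\|v\|_{L^2(E)}\le C\rho\|\nabla v\|_{L^2(E)}+C\rho^{\frac32-\frac3q}\|v\|_{L^q(E)}$. This comes from the $L^2$ Poincar\'{e} inequality for $v-(v)_E$ together with H\"{o}lder's inequality for the mean.

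You instead rederive the Cho--Yang type bound from scratch by Bogovskii duality. You use only the $v$-equation, and you remove $\nabla u$ by moving the derivative onto $v$ and $w$. You also perform the mean/oscillation split of $v$ directly inside $\int_E v\otimes u:\nabla w\,dx$. For the oscillation you use the $L^6$ Poincar\'{e}--Sobolev inequality (Lemma \ref{Lem2.2}), which pairs naturally with $u\in L^3$ and $\nabla w\in L^2$, rather than the $L^2$ Poincar\'{e} inequality.

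The paper's route is shorter and isolates exactly what is new relative to \cite{CY26}. Yours is self-contained, makes the $\rho$-independence of $C_L$ transparent, and shows that the estimate uses nothing beyond the second equation of \eqref{equ1.1}.

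One small correction: $(v\cdot\nabla)u_i=\partial_j(v_ju_i)-u_i\,\divg v$ is just the product rule. It does not use $\divg u=0$, so that attribution should be dropped.
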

\begin{proof}
From \cite[Proposition 3]{CY26}, we see that for each $L>1$ and $\rho>0$, there exists a positive constant $C_L$  such that
\begin{align}\label{ine2.2}
\|\theta - (\theta)_{E}\|_{L^2(E)} \leq C_L \|u\|_{L^3(E)} \left( \|\nabla v\|_{L^2(E)} + \rho^{-1} \|v\|_{L^2(E)} \right) + C_L\|\nabla v\|_{L^2(E)}.
\end{align}
By the Minkowski inequality, the Poincar\'{e} inequality and the H\"{o}lder inequality, we derive
	\begin{align}\label{ine2.3}
		\|v\|_{L^2(E)}&\leq \|v-(v)_E\|_{L^2(E)}+\|(v)_E\|_{L^2(E)}\notag\\
		&\leq C\rho\|\nabla v\|_{L^2 (E)}+C\rho^\frac{3}{2}\left|(v)_E\right|\\
		&\leq C\rho\|\nabla v\|_{L^2 (E)}+C\rho^{\frac{3}{2}-\frac{3}{q}}\|v\|_{L^q(E)}.\notag
	\end{align}
Thus, \eqref{ine2.1} is an immediate consequence of \eqref{ine2.2} and \eqref{ine2.3}.
\end{proof}

\section{Proofs of Theorem \ref{main1} and Corollaries \ref{Cor1.1}, \ref{Cor1.2}  }\label{sec3}
Throughout this section, we adopt the following notations:
$$S_t=B_t\backslash \overline{B_\frac{\sqrt{2}t}{2}},\;\Theta=\theta-(\theta)_{S_t}.$$
For any $\rho>0$, we denote
\begin{equation}\label{ine3.1}
f(\rho)=\|u\|_{L^6 (B_\rho)}^2+\|v\|_{L^6 (B_\rho)}^2+\|\nabla u\|_{L^2 (B_\rho)}^2+\|\nabla v\|_{L^2 (B_\rho)}^2+\|\nabla \theta\|_{L^2 (B_\rho)}^2.
\end{equation}
To prove Theorem \ref{main1}, we first derive a crucial energy estimate for the energy function defined above.
\begin{Lem}\label{Lem3.1}
Let $(u,\pi,v,\theta)$ be a smooth solution of \eqref{equ1.1} and $\sqrt{2}R\leq s<t\leq 2R$.
Then we have
\begin{align}
f(s)\leq&\frac{1}{2}\int_{S_t}|\nabla u|^2dx+\frac{C}{(t-s)^2}\int_{S_t}(|u|^2+|v|^2) d x+\frac{C}{(t-s)^2}\int_{S_t}|\Theta|^2 d x\notag\\
&+\frac{C}{t-s} \int_{S_t}|u|^3  d x+\frac{C}{t-s}\|u\|_{L^p(S_t)}\|v\|_{L^{2p'}(S_t)}^2\label{ine3.2}\\
&+\frac{C}{t-s}\int_{S_t}|u||\Theta|^2dx+\frac{C}{t-s}\int_{S_t}|v||\Theta|dx,\notag
\end{align}
where $p>1$ and $p'$ is the conjugate exponent to $p$, i.e., $p'=\frac{p}{p-1}$.
\end{Lem}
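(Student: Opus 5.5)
The plan is a localized energy estimate with a cutoff whose gradient lives in $S_t$, with the pressure removed by the Bogovskii operator of Lemma \ref{Lem2.1}. Fix $\sqrt2R\le s<t\le 2R$. Since $t/\sqrt2\le\sqrt2R\le s$, we can choose a cutoff $\phi\in C_c^\infty(B_t)$ with $0\le\phi\le1$, $\phi=1$ on $B_s$, $|\nabla\phi|\le C/(t-s)$ and $\mathrm{supp}\,\nabla\phi\subset S_t$. Set $\psi=\phi^2$.

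\emph{Step 1: correcting the test function.} The field $\psi u$ is not divergence free, so we correct it. Let $g=u\cdot\nabla\psi$, which is supported in $S_t$. Its mean over $S_t$ vanishes, because
$$\int_{S_t} g\,dx=\int_{B_t}\divg(\psi u)\,dx=0.$$
Hence $w=\mathrm{Bog}\,g\in W^{1,r}_0(S_t)$ is well defined and $\divg(\psi u-w)=0$. Since $S_t$ is a dilate of the fixed annulus $B_1\backslash\overline{B_{\sqrt2/2}}$, Lemma \ref{Lem2.1} gives, for every $1<r<\infty$,
$$\|\nabla w\|_{L^r(S_t)}\le \frac{C}{t-s}\,\|u\|_{L^r(S_t)}$$
with $C$ independent of $t$.

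\emph{Step 2: testing the equations.} We test the $u$-equation with $\psi u-w$, the $v$-equation with $\psi v$, and the $\theta$-equation with $\psi\Theta$. The $\theta$-equation is unchanged under $\theta\mapsto\Theta$, since $\nabla\theta=\nabla\Theta$. Adding the three identities, the pressure drops out. The following structural cancellations and bounds are used.
\begin{itemize}
\item[(a)] The $v$-coupling terms are $\int\divg(v\otimes v)\cdot u\psi+\int (v\cdot\nabla)u\cdot v\psi$. Their sum reduces to $-\int (v\cdot u)(v\cdot\nabla\psi)$, which is bounded by $\frac{C}{t-s}\|u\|_{L^p(S_t)}\|v\|^2_{L^{2p'}(S_t)}$ by H\"older.
\item[(b)] The $\Theta$-coupling terms are $\int\nabla\Theta\cdot v\psi+\int\divg v\,\Theta\psi$. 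Their sum equals $-\int\Theta\, v\cdot\nabla\psi$, giving the term $\frac{C}{t-s}\int_{S_t}|v||\Theta|$.
\item[(c)] Since $\divg u=0$, the convective terms become $-\frac12\int|u|^2u\cdot\nabla\psi$, $-\frac12\int|v|^2u\cdot\nabla\psi$ and $-\frac12\int|\Theta|^2u\cdot\nabla\psi$. These give the $|u|^3$ term, another $\|u\|_p\|v\|_{2p'}^2$ term, and the $|u||\Theta|^2$ term.
\item[(d)] The diffusion terms produce $\int\psi(|\nabla u|^2+|\nabla v|^2+|\nabla\Theta|^2)$, plus cross terms of the form $\int\phi|\nabla u||u||\nabla\phi|$. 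By Young's inequality the cross terms are absorbed up to $\frac{C}{(t-s)^2}\int_{S_t}(|u|^2+|v|^2+|\Theta|^2)$.
\end{itemize}

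\emph{Step 3: the Bogovskii terms.} These are the only place where the unweighted $\frac12\int_{S_t}|\nabla u|^2$ enters. For the viscous term, the estimate of Step 1 with $r=2$ and Young's inequality give
$$\int|\nabla u||\nabla w|\le\frac12\int_{S_t}|\nabla u|^2+\frac{C}{(t-s)^2}\int_{S_t}|u|^2.$$
For the nonlinear terms:
\begin{itemize}
\item[(a)] $\int|u|^2|\nabla w|\le\|u\|_{L^3}^2\|\nabla w\|_{L^3}\le\frac{C}{t-s}\|u\|_{L^3(S_t)}^3$, using Step 1 with $r=3$.
\item[(b)] $\int|v|^2|\nabla w|\le\|v\|^2_{L^{2p'}}\|\nabla w\|_{L^p}\le\frac{C}{t-s}\|u\|_{L^p(S_t)}\|v\|^2_{L^{2p'}(S_t)}$, using Step 1 with $r=p$. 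This is where $p>1$ is required, since Lemma \ref{Lem2.1} needs $1<p<\infty$.
\end{itemize}

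\emph{Step 4: recovering $f(s)$.} Since $\phi=1$ on $B_s$, the gradient parts of $f(s)$ are bounded by $\int\psi(|\nabla u|^2+|\nabla v|^2+|\nabla\Theta|^2)$. For the $L^6$ parts we use the Sobolev inequality on $\phi u\in H^1_0$:
$$\|u\|^2_{L^6(B_s)}\le\|\phi u\|^2_{L^6}\le C\|\nabla(\phi u)\|_{L^2}^2\le C\int\psi|\nabla u|^2+\frac{C}{(t-s)^2}\int_{S_t}|u|^2,$$
and similarly for $v$. Multiplying the energy identity by a harmless constant and collecting terms yields \eqref{ine3.2}.

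The main obstacle is bookkeeping rather than any single hard step. One must arrange the Bogovskii viscous term so that the coefficient in front of $\int_{S_t}|\nabla u|^2$ stays at $\frac12$ after the Sobolev step multiplies constants; this is what makes Lemma \ref{Lem2.4} applicable later. To secure this, apply Young's inequality to the Bogovskii term only after the Sobolev constant has been fixed, choosing its small parameter so that the final coefficient is $\frac12$.
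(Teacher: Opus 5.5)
Your proposal is correct and follows the paper's proof essentially step for step: the same cutoff with a Bogovskii correction $w$ solving $\divg w=u\cdot\nabla\psi$ on $S_t$, the same test functions $\psi u-w$, $\psi v$, $\psi\Theta$, the Sobolev inequality on $\phi u$, $\phi v$ for the $L^6$ parts, and H\"older/Young with the Bogovskii bound at $r=2,3,p$. The only cosmetic difference is that you absorb the viscous cross terms by Young's inequality instead of integrating by parts onto $\Delta\eta^2$ as the paper does, which just spares you the bound on $\nabla^2\eta$.
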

\begin{proof}
Since $\sqrt{2}R\leq s<t\leq 2R$, we infer $s\geq\frac{\sqrt{2}t}{2}$. We introduce a cut-off function $\eta \in C_0^{\infty}\left(\mathbb{R}^{3}\right)$ satisfying
\begin{equation*}
\eta(x)= \begin{cases}1, & |x| <s, \\ 0, & |x| >\frac{s+t}{2},\end{cases}
\end{equation*}
with
$$\text{$0\leq\eta (x)\leq 1$, and $\|\nabla \eta\|_{L^{\infty}} \leq \frac{C}{t-s}$, $\|\nabla^2\eta\|_{L^{\infty}} \leq \frac{C}{(t-s)^2}$.}$$
By Lemma \ref{Lem2.1}, there exists $w\in W_{0}^{1,\sigma}(S_t)$ such that $w$ satisfies
$$
\mathrm{div }w=u\cdot\nabla\eta^{2} \text{ in }S_t,
$$
with the estimate
\begin{equation}\label{ine3.3}
\aligned
\|\nabla w\|_{L^\sigma\left(S_t\right)}\leq C\|u\cdot\nabla\eta^2\|_{L^\sigma\left(S_t\right)}\leq\frac{C}{t-s}\|u\|_{L^\sigma(S_t)},
\endaligned
\end{equation}
for any $1<\sigma<+\infty$. We extend $w$ by zero to $B_\frac{\sqrt{2}t}{2}$, then $w\in W_{0}^{1,\sigma}(B_t).$

Obviously, $(u,\pi,v,\Theta)$ also satisfies \eqref{equ1.1}. Multiply both sides of $\eqref{equ1.1}_{1}$, $\eqref{equ1.1}_{2}$ and $\eqref{equ1.1}_{3}$ by $u \eta^2-w$, $v \eta^2$ and $\Theta\eta^2$, respectively,
integrate over $B_t$ and apply integration by parts. This procedure yields
\begin{align}
&\int_{B_t}\left(|\nabla u|^{2}+|\nabla v|^{2}+|\nabla \theta|^{2}\right)\eta^2 d x\notag\\
=&\frac{1}{2}\int_{B_t}(|u|^2+|v|^2+|\Theta|^2) \Delta \eta^2 d x+\frac{1}{2} \int_{B_t}(|u|^2+|v|^2+|\Theta|^2) u \cdot \nabla \eta^2 d x\notag\\
&+\int_{B_t}\nabla u:\nabla w dx+ \int_{B_t}(v\otimes v):(u\otimes\nabla \eta^2) dx+ \int_{B_t}\Theta v\cdot\nabla \eta^2 dx\label{ine3.4}\\
& - \int_{B_t}(u\otimes u+v\otimes v): \nabla w  dx.\notag
\end{align}
Applying the Gagliardo-Nirenberg inequality, we have
\begin{align*}
\|u \eta\|_{L^6 (B_t)}^2+\|v \eta\|_{L^6 (B_t)}^2\leq&C\left(\|\nabla(u \eta)\|_{L^2 (B_t)}^2+\|\nabla(v \eta)\|_{L^2 (B_t)}^2\right)\\
\leq&C \left(\|\eta\nabla u\|_{L^2 (B_t)}^2+\|\eta\nabla v\|_{L^2 (B_t)}^2\right)\\
&+C\left(\|u\otimes\nabla \eta\|_{L^2 (B_t)}^2+\|v\otimes\nabla \eta\|_{L^2 (B_t)}^2\right).
\end{align*}
Combining the above inequality and \eqref{ine3.4}, we have
\begin{align}
&\|u\eta\|_{L^6 (B_t)}^2+\|v \eta\|_{L^6 (B_t)}^2+\int_{B_t}\left(|\nabla u|^{2}+|\nabla v|^{2}+|\nabla \theta|^{2}\right)\eta^2 d x\notag\\
\leq&\frac{C}{(t-s)^2}\int_{S_t}(|u|^2+|v|^2+|\Theta|^2) d x+ \frac{C}{t-s} \int_{S_t}|u|^3  d x\notag\\
&+C\int_{S_t}|\nabla u||\nabla w| dx+C\int_{S_t}|u|^{2} |\nabla w|dx+C\int_{S_t}|v|^{2}|\nabla w|dx \label{ine3.5}\\
&+\frac{C}{t-s}\int_{S_t}|v|^2|u|dx+\frac{C}{t-s}\int_{S_t}|\Theta|^2|u|dx+\frac{C}{t-s}\int_{S_t}|v||\Theta|dx\notag\\
=&:\sum_{i=1}^8I_i.\notag
\end{align}

By the H\"{o}lder inequality and \eqref{ine3.3}, we obtain
\begin{align}\label{ine3.6}
I_2+I_4\leq\frac{C}{t-s}\|u\|_{L^{3}(S_t)}^3+C\|u\|_{L^{3}(S_t)}^{2}\|\nabla w\|_{L^{3}(S_t)}\leq\frac{C}{t-s}\|u\|_{L^{3}(S_t)}^3.
\end{align}
By the Young inequality and \eqref{ine3.3}, we have
\begin{align}\label{ine3.7}
I_3&\leq\frac{1}{2}\int_{S_t}|\nabla u|^2dx+C\int_{S_t}|\nabla w|^2dx\leq\frac{1}{2}\int_{S_t}|\nabla u|^2dx+\frac{C}{(t-s)^2}\int_{S_t}|u|^2dx.
\end{align}
Using the H\"{o}lder inequality and \eqref{ine3.3} again, we obtain
\begin{align}\label{ine3.8}
I_5+I_6\leq&C\|v\|_{L^{2p'}(S_t)}\|\nabla w\|_{L^p(S_t)}+\frac{C}{t-s}\|v\|_{L^{2p'}(S_t)}\|u\|_{L^p(S_t)}\notag\\
\leq&\frac{C}{t-s}\|u\|_{L^p(S_t)}\|v\|_{L^{2p'}(S_t)}.
\end{align}
Collecting \eqref{ine3.5}, \eqref{ine3.6}, \eqref{ine3.7} and \eqref{ine3.8}, we conclude that \eqref{ine3.2} holds.
\end{proof}

For the sake of convenience, we denote six terms in \eqref{ine3.2} by
\begin{align*}
&J_1=\frac{C}{(t-s)^2}\int_{S_t}(|u|^2+|v|^2) d x,\quad J_2=\frac{C}{(t-s)^2}\int_{S_t}|\Theta|^2 d x,\notag\\
&J_3=\frac{C}{t-s} \int_{S_t}|u|^3  d x,\quad J_4=\frac{C}{t-s}\|u\|_{L^p(S_t)}\|v\|_{L^{2p'}(S_t)}^2,\\
&J_5=\frac{C}{t-s}\int_{S_t}|u||\Theta|^2dx,\quad J_6=\frac{C}{t-s}\int_{S_t}|v||\Theta|dx.\notag
\end{align*}
The estimates for $J_1$, $J_2$, $J_3$, $J_4$, $J_5$, $J_6$ are given in the next six lemmas.

\begin{Lem}\label{Lem3.2}
Let $\sqrt{2}R\leq s<t\leq 2R$, $p\geq1$ and $1\leq q\leq2$.
Suppose that $u,v$ are smooth vector-valued functions. Then we have
\begin{align}
J_1\leq&\frac{1}{16}\left(\|u\|_{L^{6}(B_t)}^{2}+\|v\|_{L^{6}(B_t)}^{2}\right)+\frac{C}{(t-s)^{\frac{6}{p}-1}}\|u\|_{L^{p}(A_{R})}^{2}\notag\\
&+\frac{CR^{3-\frac{6}{p}}}{(t-s)^2}\|u\|_{L^p(A_{R})}^2+\frac{C}{(t-s)^{\frac{6}{q}-1}}\|v\|_{L^{q}(A_{R})}^{2},\label{ine3.9}\\
J_1\leq&\frac{CR^2}{(t-s)^2}\left(\|u\|_{L^6(A_{R})}^2 +\|v\|_{L^6(A_{R})}^2\right).\label{ine3.10}
\end{align}
\end{Lem}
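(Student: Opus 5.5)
Both estimates rest on a single geometric observation. Since $\sqrt{2}R\le s<t\le 2R$, we have $\frac{\sqrt{2}t}{2}>\frac{\sqrt{2}s}{2}\ge R$ and $t\le 2R$, so $S_t\subset A_R$. At the same time $S_t\subset B_t$ and $|S_t|\le CR^3$. Once this is in place, each bound is an application of H\"{o}lder's inequality, possibly combined with interpolation and Young's inequality. All $L^6$ norms are then enlarged either to $B_t$ or to $A_R$, and all $L^p$, $L^q$ norms to $A_R$.

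For \eqref{ine3.10}, I apply H\"{o}lder's inequality directly on $S_t$:
$$\int_{S_t}|u|^2dx\le |S_t|^{\frac{2}{3}}\|u\|_{L^6(S_t)}^2\le CR^2\|u\|_{L^6(A_R)}^2,$$
and argue the same way for $v$. Multiplying by $C(t-s)^{-2}$ gives the claim.

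For \eqref{ine3.9}, I treat $u$ by splitting according to whether $p\ge 2$ or $p<2$.

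If $p\ge2$, H\"{o}lder's inequality gives
$$\int_{S_t}|u|^2dx\le|S_t|^{1-\frac{2}{p}}\|u\|_{L^p(S_t)}^2\le CR^{3-\frac{6}{p}}\|u\|_{L^p(A_R)}^2,$$
which produces the third term on the right of \eqref{ine3.9}.

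If $1\le p<2$, I interpolate between $L^p$ and $L^6$:
$$\|u\|_{L^2}\le\|u\|_{L^p}^{\vartheta}\|u\|_{L^6}^{1-\vartheta},\qquad \frac12=\frac{\vartheta}{p}+\frac{1-\vartheta}{6},$$
which gives $\vartheta=\frac{2p}{6-p}\in(0,1]$. Young's inequality with exponents $\frac1{1-\vartheta}$ and $\frac1\vartheta$ then yields
$$\frac{C}{(t-s)^2}\|u\|_{L^p}^{2\vartheta}\|u\|_{L^6}^{2(1-\vartheta)}\le\frac1{16}\|u\|_{L^6(B_t)}^2+\frac{C}{(t-s)^{2/\vartheta}}\|u\|_{L^p(A_R)}^2,$$
and here $\frac{2}{\vartheta}=\frac{6-p}{p}=\frac6p-1$. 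This is the second term on the right of \eqref{ine3.9}.

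In either case the $u$-part of $J_1$ is bounded by the sum of these terms, which is exactly what \eqref{ine3.9} records. For $v$ with $1\le q\le2$, the same interpolation with $\vartheta=\frac{2q}{6-q}$ gives the last term, $C(t-s)^{-(\frac6q-1)}\|v\|_{L^q(A_R)}^2$. In the endpoint case $q=2$ we have $\vartheta=1$, and the bound is just the trivial one.

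I do not expect any real obstacle. The only points needing care are the inclusion $S_t\subset A_R$, which is used to pass to $A_R$-norms, and the exponent bookkeeping in Young's inequality, which must produce exactly $(t-s)^{-(\frac6p-1)}$ and keep the small constant $\frac1{16}$ in front of the $L^6$ terms.
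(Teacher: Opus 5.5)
Your proposal is correct and follows essentially the same route as the paper's proof. You use the inclusion $S_t\subset A_R$ and H\"{o}lder on $S_t$ for \eqref{ine3.10}; for \eqref{ine3.9} you split into $p\ge 2$ (H\"{o}lder, giving the $R^{3-\frac{6}{p}}$ term) and $1\le p<2$ (interpolation between $L^p$ and $L^6$, then Young, giving the $(t-s)^{-(\frac{6}{p}-1)}$ term), and treat $v$ the same way.
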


\begin{proof}
Denote
\begin{equation*}
J_{11}=\frac{C}{(t-s)^2}\int_{S_t}|u|^2 dx,\;J_{12}=\frac{C}{(t-s)^2}\int_{S_t}|v|^2dx,
\end{equation*}
then $J_1=J_{11}+J_{12}$.
When $1\leq p<2$, using the interpolation inequality and the Young inequality, we have
\begin{equation}\label{ine3.11}
\aligned
J_{11}&\leq\frac{C}{(t-s)^{2}}\|u\|_{L^p(S_t)}^{\frac{4p}{6-p}}\|u\|_{L^6(S_t)}^{\frac{12-6p}{6-p}}\\
&\leq\frac{1}{16}\|u\|_{L^{6}(S_t)}^{2}+\frac{C}{(t-s)^{\frac{6}{p}-1}}\|u\|_{L^{p}(S_t)}^{2}\\
&\leq\frac{1}{16}\|u\|_{L^{6}(B_t)}^{2}+\frac{C}{(t-s)^{\frac{6}{p}-1}}\|u\|_{L^{p}(A_R)}^{2}.
\endaligned
\end{equation}
When $p\geq2$, using the H\"{o}lder inequality, we obtain
\begin{equation}\label{ine3.12}
J_{11}\leq\frac{C}{(t-s)^2}t^{3-\frac{6}{p}}\|u\|_{L^p (S_t)}^2\leq\frac{C}{(t-s)^2}R^{3-\frac{6}{p}}\|u\|_{L^p (A_R)}^2.
\end{equation}
Combining \eqref{ine3.11} and \eqref{ine3.12}, we conclude that $J_{11}$ can be always controlled by
\begin{equation*}
J_{11}\leq\frac{1}{16}\|u\|_{L^{6}(B_t)}^{2}+\frac{C}{(t-s)^{\frac{6}{p}-1}}\|u\|_{L^{p}(A_R)}^{2}+\frac{C}{(t-s)^2}R^{3-\frac{6}{p}}\|u\|_{L^p (A_R)}^2.
\end{equation*}
By a similar argument, we conclude
\begin{equation*}
J_{12}\leq\frac{1}{16}\|v\|_{L^{6}(B_t)}^{2}+\frac{C}{(t-s)^{\frac{6}{q}-1}}\|v\|_{L^{q}(A_R)}^{2}.
\end{equation*}
Consequently, \eqref{ine3.9} holds.

Using the H\"{o}lder inequality, we obtain
\begin{align*}
J_1\leq\frac{Ct^2}{(t-s)^2}\left(\|u\|_{L^6(S_t)}^2 +\|v\|_{L^6(S_t)}^2\right)\leq\frac{CR^2}{(t-s)^2}\left(\|u\|_{L^6(A_R)}^2 +\|v\|_{L^6(A_R)}^2\right).
\end{align*}

\end{proof}

\begin{Lem}\label{Lem3.3}
Let $\sqrt{2}R\leq s<t\leq 2R$, $\frac{3}{2}<p\leq3$ and $1\leq q\leq2$. Suppose that $u,v,\theta$ are smooth vector-valued functions. Then for any $\varepsilon\in(0,2)$, there exist positive constants $C$ and $C_\varepsilon$ such that
\begin{align}
	J_2\leq&\frac{1}{8}\left(\|u\|_{L^6(B_t)}^2+\|\nabla v\|_{L^2(B_t)}^2\right)+\left[\frac{C R^{1-\frac{6}{q}}}{(t-s)^2}\|u\|_{L^p(A_R)}^\frac{2p}{6-p}\|v\|_{L^q(A_R)}^2\right]^\frac{6-p}{p}\notag\\
	&+\left[\frac{C_\varepsilon}{(t-s)^2}\|u\|_{L^p(A_R)}^\frac{2p}{6-p}\|\nabla v\|_{L^2(A_R)}^{\frac{12-4p}{6-p}+\varepsilon}\right]^\frac{2}{\varepsilon}+\left[\frac{C_\varepsilon}{(t-s)^2}\|\nabla v\|_{L^2(A_R)}^\varepsilon\right]^\frac{2}{\varepsilon},\label{ine3.13}\\
J_2\leq&\frac{CR^2}{(t-s)^2}\|\nabla \theta\|_{L^2(A_R)}^2.\label{ine3.14}
\end{align}

\end{Lem}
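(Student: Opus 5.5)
The plan is to bound $\|\Theta\|_{L^2(S_t)}$ by Lemma \ref{Lem2.5} (for \eqref{ine3.13}) or by the Poincar\'e inequality (for \eqref{ine3.14}), and then absorb the unwanted factors with interpolation and Young's inequality. Throughout I use two facts. First, since $\sqrt2 R\le s<t\le 2R$, we have $\frac{\sqrt2}{2}t\ge R$, so $S_t\subset A_R$ and $t\simeq R$. Second, $S_t=B_{\sqrt2\rho}\backslash B_\rho$ with $\rho=\frac{\sqrt2}{2}t$, so Lemma \ref{Lem2.5} applies with $L=\sqrt2$ and $E=S_t$. (Strictly $S_t$ excludes the sphere $|x|=\rho$, a null set, so the integrals agree; also $(\theta)_E=(\theta)_{S_t}$.)

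For \eqref{ine3.14}, $S_t$ is a $C^1$ annulus and a dilate of $S_1$. Lemma \ref{Lem2.3} with the scaling $C(3,2,tS_1)=tC(3,2,S_1)$ gives
\[
\|\Theta\|_{L^2(S_t)}\le Ct\|\nabla\theta\|_{L^2(S_t)}\le CR\|\nabla\theta\|_{L^2(A_R)}.
\]
Squaring and dividing by $(t-s)^2$ gives \eqref{ine3.14}.

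For \eqref{ine3.13}, squaring \eqref{ine2.1} gives
\[
J_2\le \frac{C}{(t-s)^2}\Big(\|u\|_{L^3}^2\|\nabla v\|_{L^2}^2+R^{1-\frac6q}\|u\|_{L^3}^2\|v\|_{L^q}^2+\|\nabla v\|_{L^2}^2\Big),
\]
with all norms over $S_t\subset A_R$. Next I interpolate $L^3$ between $L^p$ and $L^6$, which is valid since $\frac32<p\le3$:
\[
\|u\|_{L^3}^2\le \|u\|_{L^p}^{\frac{2p}{6-p}}\|u\|_{L^6}^{a},\qquad a=\frac{12-4p}{6-p}\in[0,2),\quad 2-a=\frac{2p}{6-p}.
\]

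The three terms are then handled as follows.
\begin{itemize}
\item Middle term: apply Young with exponents $\frac{2}{a}$ and $\frac{2}{2-a}=\frac{6-p}{p}$. This splits off $\frac1{16}\|u\|_{L^6(B_t)}^2$ and leaves $\big[\frac{CR^{1-6/q}}{(t-s)^2}\|u\|_{L^p}^{\frac{2p}{6-p}}\|v\|_{L^q}^2\big]^{\frac{6-p}{p}}$. When $p=3$ we have $a=0$ and no Young step is needed.
\item Last term: write it as $\big[\frac{C}{(t-s)^2}\|\nabla v\|^{\varepsilon}\big]\cdot\|\nabla v\|^{2-\varepsilon}$ and apply Young with exponents $\frac2\varepsilon$ and $\frac{2}{2-\varepsilon}$. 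This gives the last bracket in \eqref{ine3.13} plus $\frac1{32}\|\nabla v\|_{L^2(B_t)}^2$.
\item First term: write it as
\[
\Big[\frac{C}{(t-s)^2}\|u\|_{L^p}^{\frac{2p}{6-p}}\|\nabla v\|^{a+\varepsilon}\Big]\cdot\|u\|_{L^6}^{a}\cdot\|\nabla v\|^{2-a-\varepsilon}
\]
and apply the three-factor Young inequality with exponents $\frac2\varepsilon$, $\frac2a$, $\frac{2}{2-a-\varepsilon}$, whose reciprocals sum to $1$. This produces the third bracket in \eqref{ine3.13} plus small multiples of $\|u\|_{L^6(B_t)}^2$ and $\|\nabla v\|_{L^2(B_t)}^2$.
\end{itemize}
Collecting everything, with the small constants chosen to total $\frac18$, gives \eqref{ine3.13}.

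The step I expect to be delicate is the three-factor Young split. It needs $2-a-\varepsilon\ge0$, i.e. $\varepsilon\le \frac{2p}{6-p}$. For larger $\varepsilon<2$, I would first try to pass to a smaller admissible exponent $\varepsilon'$ and compare the resulting powers. If that fails, I would simply state the estimate for $\varepsilon$ in that range, since in the later iteration only small $\varepsilon$ is needed. The constant $C_\varepsilon$ arises exactly from these Young steps.
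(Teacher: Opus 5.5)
Your proposal is correct and follows the paper's proof essentially step for step. It uses Lemma~\ref{Lem2.5} with $L=\sqrt2$ and $\rho=\frac{\sqrt2}{2}t$, interpolates $L^3$ between $L^p$ and $L^6$, applies the same two- and three-factor Young splits to the three resulting terms, and uses the scaled Poincar\'e inequality for \eqref{ine3.14}; the only difference is that the paper merges the first and last terms into one bracket with $\|u\|_{L^3}^2+1$ when $p=3$. The restriction $\varepsilon\le\frac{2p}{6-p}$ that you flagged is real and is equally implicit in the paper's proof (through its factor $\|\nabla v\|_{L^2(S_t)}^{\frac{2p}{6-p}-\varepsilon}$), so for $p<3$ the stated range $\varepsilon\in(0,2)$ is not justified; passing to a smaller $\varepsilon'$ will not help, since the brackets carry different powers of $\|u\|_{L^p}$ and $(t-s)$, but restricting to small $\varepsilon$ is harmless because that is all the later iteration uses.
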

\begin{proof}
Taking $\rho=\frac{\sqrt{2}t}{2}$ and $L=\sqrt{2}$ in Lemma \ref{Lem2.5}, we obtain
\begin{align*}
 J_2&\leq\frac{C}{(t-s)^2}\|u\|_{L^3(S_t)}^2\|\nabla v\|_{L^2(S_t)}^2+\frac{Ct^{1-\frac{6}{q}}}{(t-s)^2}\|u\|_{L^3(S_t)}^2\|v\|_{L^q(S_t)}^2+\frac{C}{(t-s)^2}\|\nabla v\|_{L^2(S_t)}^2\\
 &=:J_{21}+J_{22}+J_{23}.
\end{align*}

When $p=3$, using the Young inequality, we get
\begin{align*}
J_{21}+J_{23}&=\|\nabla v\|_{L^2(S_t)}^{2-\varepsilon}\cdot\frac{C}{(t-s)^2}\left(\|u\|_{L^3(S_t)}^2+1\right)\|\nabla v\|_{L^2(S_t)}^\varepsilon\\
&\leq\frac{1}{16}\|\nabla v\|_{L^2(S_t)}^2+\left[\frac{C_\varepsilon}{(t-s)^2}\left(\|u\|_{L^3(S_t)}^2+ 1\right)\|\nabla v\|_{L^2(S_t)}^\varepsilon\right]^\frac{2}{\varepsilon}.
\end{align*}
Hence, we arrive at
\begin{align}
J_2\leq&\frac{1}{16}\|\nabla v\|_{L^2(B_t)}^2+\frac{CR^{1-\frac{6}{q}}}{(t-s)^2}\|u\|_{L^3(A_R)}^2\|v\|_{L^q(A_R)}^2\notag\\
&+\left[\frac{C_\varepsilon}{(t-s)^2}\|u\|_{L^3(A_R)}^2\|\nabla v\|_{L^2(A_R)}^\varepsilon\right]^\frac{2}{\varepsilon}+\left[\frac{C_\varepsilon}{(t-s)^2}\|\nabla v\|_{L^2(A_R)}^\varepsilon\right]^\frac{2}{\varepsilon}.\label{ine3.15}
\end{align}

When $\frac{3}{2} < p < 3$, by the interpolation inequality and the Young inequality, we get
\begin{align*}
J_{21}&\leq\frac{C}{(t-s)^2}\|u\|_{L^p(S_t)}^\frac{2p}{6-p}\|u\|_{L^6(S_t)}^\frac{12-4p}{6-p}\|\nabla v\|_{L^2(S_t)}^2\\
&=\|u\|_{L^6(S_t)}^\frac{12-4p}{6-p}\cdot\|\nabla v\|_{L^2(S_t)}^{\frac{2p}{6-p}-\varepsilon}\cdot\frac{C}{(t-s)^2}\|u\|_{L^p(S_t)}^\frac{2p}{6-p}\|\nabla v\|_{L^2(S_t)}^{\frac{12-4p}{6-p}+\varepsilon}\\
&\leq\frac{1}{16}\|u\|_{L^6(S_t)}^2+\frac{1}{16}\|\nabla v\|_{L^2(S_t)}^2+\left[\frac{C_\varepsilon}{(t-s)^2}\|u\|_{L^p(S_t)}^\frac{2p}{6-p}\|\nabla v\|_{L^2(S_t)}^{\frac{12-4p}{6-p}+\varepsilon}\right]^\frac{2}{\varepsilon},\\
J_{22}&\leq\|u\|_{L^6(S_t)}^\frac{12-4p}{6-p}\cdot\frac{CR^{1-\frac{6}{q}}}{(t-s)^2}\|u\|_{L^p(S_t)}^\frac{2p}{6-p}\|v\|_{L^q(S_t)}^2\\
&\leq\frac{1}{16}\|u\|_{L^6(S_t)}^2+\left[\frac{CR^{1-\frac{6}{q}}}{(t-s)^2}\|u\|_{L^p(S_t)}^\frac{2p}{6-p}\|v\|_{L^q(S_t)}^2\right]^\frac{6-p}{p},\\
J_{23}&=\|\nabla v\|_{L^2(S_t)}^{2-\varepsilon}\cdot\frac{C}{(t-s)^2}\|\nabla v\|_{L^2(S_t)}^\varepsilon\\
&\leq\frac{1}{16}\|\nabla v\|_{L^2(S_t)}^2+\left[\frac{C_\varepsilon}{(t-s)^2}\|\nabla v\|_{L^2(S_t)}^\varepsilon\right]^\frac{2}{\varepsilon}.
\end{align*}
Combining the above three inequalities, we deduce
\begin{align}
	J_2\leq&\frac{1}{8}\|u\|_{L^6(B_t)}^2+\frac{1}{8}\|\nabla v\|_{L^2(B_t)}^2+\left[\frac{C R^{1-\frac{6}{q}}}{(t-s)^2}\|u\|_{L^p(A_R)}^\frac{2p}{6-p}\|v\|_{L^q(A_R)}^2\right]^\frac{6-p}{p}\notag\\
&+\left[\frac{C_\varepsilon}{(t-s)^2}\|u\|_{L^p(A_R)}^\frac{2p}{6-p}\|\nabla v\|_{L^2(A_R)}^{\frac{12-4p}{6-p}+\varepsilon}\right]^\frac{2}{\varepsilon}+\left[\frac{C_\varepsilon}{(t-s)^2}\|\nabla v\|_{L^2(A_R)}^\varepsilon\right]^\frac{2}{\varepsilon}.\label{ine3.16}
\end{align}
Taking \eqref{ine3.15} and \eqref{ine3.16} into consideration, it is easy to verify that \eqref{ine3.13} is valid.

Using the Poincar\'{e} inequality, we get
\begin{align*}
	J_2\leq \frac{Ct^2}{(t-s)^2}\|\nabla \theta\|_{L^2(S_t)}^2\leq \frac{CR^2}{(t-s)^2}\|\nabla \theta\|_{L^2(A_R)}^2.
\end{align*}

\end{proof}

\begin{Lem}\label{Lem3.4}
Let $\sqrt{2}R\leq s<t\leq 2R$. Suppose that $u$ is a smooth vector-valued function. Then we have the following conclusions:
\begin{itemize}
\item[(i)]  For $p\in\left(\frac{3}{2},3\right]$, it holds that
\begin{equation}\label{ine3.17}
J_3\leq \frac{C}{t-s}\|u\|_{L^p(A_R)}^{\frac{3p}{6-p}}\|u\|_{L^6(A_R)}^{\frac{18-6p}{6-p}}.
\end{equation}
\item[(ii)] For $p\in\left(\frac{3}{2},3\right)$, it holds that
\begin{equation}\label{ine3.18}
J_3\leq\frac{1}{16}\|u\|_{L^6(B_t)}^2+\left(\frac{C}{(t-s)^{\frac{2}{p}-\frac{1}{3}}}\|u\|_{L^p(A_{R})}\right)^\frac{3p}{2p-3}.
\end{equation}
\end{itemize}
\end{Lem}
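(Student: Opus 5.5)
Both parts follow from a Lebesgue interpolation between $L^p$ and $L^6$ on $S_t$, followed for (ii) by a Young inequality. Since $\sqrt{2}R\leq s<t\leq 2R$, we have $S_t=B_t\backslash\overline{B_{\sqrt{2}t/2}}\subset A_R$, so at the end every norm over $S_t$ may be replaced by the corresponding norm over $A_R$.

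For (i), fix $p\in(\frac{3}{2},3]$ and write
$$\frac{1}{3}=\frac{\vartheta}{p}+\frac{1-\vartheta}{6}.$$
Solving gives $\vartheta=\frac{p}{6-p}$, which lies in $(\frac13,1]$ for this range of $p$ (with $\vartheta=1$ exactly when $p=3$). The interpolation inequality then yields
$$\|u\|_{L^3(S_t)}^3\leq \|u\|_{L^p(S_t)}^{3\vartheta}\|u\|_{L^6(S_t)}^{3(1-\vartheta)},$$
and the exponents are $3\vartheta=\frac{3p}{6-p}$ and $3(1-\vartheta)=\frac{18-6p}{6-p}$. Multiplying by $\frac{C}{t-s}$ and enlarging $S_t$ to $A_R$ gives \eqref{ine3.17}. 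For $p=3$ the inequality is trivial.

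For (ii), take $p\in(\frac32,3)$ and start from the same interpolation, keeping the $L^6$ norm over $S_t\subset B_t$ for now. The exponent of $\|u\|_{L^6}$ is $\frac{18-6p}{6-p}$, and it is below $2$ exactly when $18-6p<12-2p$, i.e. $p>\frac32$. Apply Young's inequality with exponents
$$\frac{2(6-p)}{18-6p}=\frac{6-p}{9-3p}\quad\text{and its conjugate}\quad\frac{6-p}{2p-3}.$$
This produces $\frac{1}{16}\|u\|_{L^6(B_t)}^2$ plus a remainder
$$C\Big(\frac{1}{t-s}\|u\|_{L^p}^{\frac{3p}{6-p}}\Big)^{\frac{6-p}{2p-3}}=C\,(t-s)^{-\frac{6-p}{2p-3}}\|u\|_{L^p}^{\frac{3p}{2p-3}}.$$
To put the remainder in the stated form, note that
$$\Big(\frac{2}{p}-\frac13\Big)\frac{3p}{2p-3}=\frac{6-p}{2p-3},$$
so the remainder equals $\Big(\frac{C}{(t-s)^{\frac2p-\frac13}}\|u\|_{L^p(A_R)}\Big)^{\frac{3p}{2p-3}}$. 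This is \eqref{ine3.18}.

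The only point needing care is this exponent bookkeeping, together with checking that the Young exponent is admissible, which is exactly the condition $p>\frac32$. That condition also explains why (ii) excludes the endpoint $p=3$: there $\vartheta=1$, so no $L^6$ factor appears to absorb into the left-hand side.
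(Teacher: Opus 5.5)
Your proof is correct and follows the paper's argument: you interpolate $\|u\|_{L^3(S_t)}$ between $L^p$ and $L^6$ with $\vartheta=\frac{p}{6-p}$ and use $S_t\subset A_R$ for (i), then apply Young's inequality with the conjugate pair $\frac{6-p}{9-3p}$, $\frac{6-p}{2p-3}$ for (ii). Your exponent bookkeeping checks out, including the identity $\left(\frac{2}{p}-\frac13\right)\frac{3p}{2p-3}=\frac{6-p}{2p-3}$ and the admissibility condition $p>\frac32$.
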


\begin{proof}
When $p\in\left(\frac{3}{2},3\right)$, using the interpolation inequality, we obtain
\begin{align}\label{ine3.19}
J_3\leq \frac{C}{t-s}\|u\|_{L^p(S_t)}^{\frac{3p}{6-p}}\|u\|_{L^6(S_t)}^{\frac{18-6p}{6-p}},
\end{align}
which implies \eqref{ine3.17}. Applying the Young inequality to \eqref{ine3.19}, we get
\begin{align*}
J_3&\leq \frac{1}{16}\|u\|_{L^6(S_t)}^2+\left(\frac{C}{t-s}\|u\|_{L^p(S_t)}^{\frac{3p}{6-p}}\right)^\frac{6-p}{2p-3}\\
&\leq\frac{1}{16}\|u\|_{L^6(B_t)}^2+\left(\frac{C}{(t-s)^{\frac{2}{p}-\frac{1}{3}}}\|u\|_{L^p(A_R)}\right)^\frac{3p}{2p-3}.
\end{align*}
\end{proof}

\begin{Lem}\label{Lem3.5}
Let $\sqrt{2}R\leq s<t\leq 2R$, $\frac{3}{2}<p\leq3$ and $1\leq q\leq2$. Suppose that $u,v$ are smooth vector-valued functions. Then we have
\begin{align}
J_4&\leq \frac{C}{t-s}\|u\|_{L^p(A_R)}\|v\|_{L^q(A_R)}^\frac{(4p-6)q}{(6-q)p}\|v\|_{L^6(A_R)}^{2-\frac{(4p-6)q}{(6-q)p}},\label{ine3.20}\\
J_4&\leq \frac{1}{16}\|v\|_{L^6(B_t)}^2+\left(\frac{C}{t-s}\|u\|_{L^p(A_R)}\|v\|_{L^q(A_R)}^\frac{(4p-6)q}{(6-q)p}\right)^\frac{(6-q)p}{(2p-3)q}.\label{ine3.21}
\end{align}
\end{Lem}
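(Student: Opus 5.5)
Both bounds come from Lebesgue interpolation of $\|v\|_{L^{2p'}(S_t)}$ between $L^q$ and $L^6$. The second then follows from the first by the Young inequality. Recall $J_4=\frac{C}{t-s}\|u\|_{L^p(S_t)}\|v\|_{L^{2p'}(S_t)}^2$ with $p'=\frac{p}{p-1}$.

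First I locate the exponent $2p'$. Since $\frac{3}{2}<p\leq3$, we have $p'\in[\frac32,3)$, so $2p'\in[3,6)$. Since $q\leq 2<3$, the exponent $2p'$ lies between $q$ and $6$. Write
$$\frac{1}{2p'}=\frac{\vartheta}{q}+\frac{1-\vartheta}{6},\qquad \frac{1}{2p'}=\frac{p-1}{2p}.$$
Solving gives $\vartheta\left(\frac1q-\frac16\right)=\frac{p-1}{2p}-\frac16=\frac{2p-3}{6p}$. Hence
$$\vartheta=\frac{(2p-3)q}{(6-q)p},\qquad 2\vartheta=\frac{(4p-6)q}{(6-q)p}.$$
The Hölder interpolation inequality then gives
$$\|v\|_{L^{2p'}(S_t)}^2\leq \|v\|_{L^q(S_t)}^{2\vartheta}\|v\|_{L^6(S_t)}^{2-2\vartheta}.$$
No constant depending on $R$ appears here, since this is pure interpolation. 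Note that $\vartheta\in(0,1]$: $\vartheta>0$ because $p>\frac32$, and $\vartheta\leq1$ because $2p'\geq q$.

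Next, I use $S_t\subset A_R$, which holds because $\sqrt2R\leq s<t\leq2R$ gives $\frac{\sqrt2 t}{2}\geq R$ and $t\leq 2R$. Monotonicity of the norms in the domain then turns all $S_t$ norms into $A_R$ norms and yields \eqref{ine3.20}.

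For \eqref{ine3.21}, I go back to the $S_t$ version, in which $\|v\|_{L^6(S_t)}\leq\|v\|_{L^6(B_t)}$. I write
$$J_4\leq \|v\|_{L^6(B_t)}^{2-2\vartheta}\cdot\left(\frac{C}{t-s}\|u\|_{L^p(A_R)}\|v\|_{L^q(A_R)}^{2\vartheta}\right).$$
I apply the Young inequality with exponents $\frac{2}{2-2\vartheta}=\frac{1}{1-\vartheta}$ and $\frac1\vartheta$, choosing a small constant so that the first piece is $\frac1{16}\|v\|_{L^6(B_t)}^2$. The second piece is then the bracket raised to the power $\frac1\vartheta=\frac{(6-q)p}{(2p-3)q}$, which matches \eqref{ine3.21}.

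The only delicate point is the endpoint case $\vartheta=1$, where $2p'=q$. This forces $q=2p'\geq3$, which is impossible because $q\leq2$. So $\vartheta<1$, the Young step is legitimate, and there is no real obstacle. The lemma is a direct computation.
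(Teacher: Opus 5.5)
Your proposal is correct and follows essentially the same route as the paper. You interpolate $\|v\|_{L^{2p'}(S_t)}$ between $L^q$ and $L^6$ with $\vartheta=\frac{(2p-3)q}{(6-q)p}$, enlarge the domains to $A_R$ or $B_t$ by monotonicity, and apply the Young inequality with exponents $\frac{1}{1-\vartheta}$ and $\frac{1}{\vartheta}$; your explicit check that $0<\vartheta<1$ is a detail the paper leaves implicit.
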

\begin{proof}
Using the interpolation inequality, we obtain
\begin{align}\label{ine3.22}
J_4&\leq \frac{C}{t-s}\|u\|_{L^p(S_t)}\|v\|_{L^q(S_t)}^\frac{(4p-6)q}{(6-q)p}\|v\|_{L^6(S_t)}^{2-\frac{(4p-6)q}{(6-q)p}},
\end{align}
which implies \eqref{ine3.20}. Applying the Young inequality to \eqref{ine3.22},
\begin{align*}
J_4&\leq \frac{1}{16}\|v\|_{L^6(S_t)}^2+\left(\frac{C}{t-s}\|u\|_{L^p(S_t)}\|v\|_{L^q(S_t)}^\frac{(4p-6)q}{(6-q)p}\right)^\frac{(6-q)p}{(2p-3)q},
\end{align*}
which follows \eqref{ine3.21}.
\end{proof}

\begin{Lem}\label{Lem3.6}
Let $\sqrt{2}R\leq s<t\leq 2R$ and $\frac{3}{2}<p\leq3$. Suppose that $u,v,\theta$ are smooth vector-valued functions. Then for any $\varepsilon\in\left(0,\frac{2p-3}{6-p}\right)$, there exist positive constants $C_\varepsilon$  and $C$ such that
\begin{align}\label{ine3.23}
		J_5\leq&\frac{3}{16}\left(\|u\|_{L^6(S_t)}^2+\|\nabla v\|_{L^2(S_t)}^2+\|\nabla \theta\|_{L^2(S_t)}^2\right)+\left(\frac{C_\varepsilon}{t-s}\|u\|_{L^p(S_t)}^\frac{p+3}{6-p}\|\nabla v\|_{L^2(S_t)}^{\tau_1+\varepsilon}\right)^\frac{2}{\varepsilon}\notag\\
		&+\left(\frac{C R^{\tau_2}}{t-s}\|u\|_{L^p(S_t)}^\frac{p+3}{6-p}\|v\|_{L^q(S_t)}^{2-\frac{3}{p}}\right)^\frac{12-2p}{2p-3}+\left(\frac{C_\varepsilon}{t-s}\| u\|_{L^p(S_t)}\| \nabla v\|_{L^2(S_t)}^{\varepsilon}\right)^\frac{2}{\varepsilon},
\end{align}
where $\tau_1$ and $\tau_2$ are given by
$$\tau_1=\frac{6-2p}{6-p}\left(2-\frac{3}{p}\right),\quad \tau_2=\left(\frac{1}{2}-\frac{3}{q}\right)\left(2-\frac{3}{p}\right).$$
\end{Lem}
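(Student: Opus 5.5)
The plan is to start from a Hölder estimate adapted to the $L^p$ norm of $u$, rather than to $L^3$, and then remove the temperature with Lemma \ref{Lem2.5}. Since $p\in(\frac32,3]$, we have $2p'=\frac{2p}{p-1}\in[3,6)$. Hölder gives
$$J_5\le \frac{C}{t-s}\|u\|_{L^p(S_t)}\|\Theta\|_{L^{2p'}(S_t)}^2 .$$
I then interpolate $\Theta$ between $L^2$ and $L^6$. Solving $\frac{1}{2p'}=\frac{b}{2}+\frac{1-b}{6}$ gives $b=1-\frac{3}{2p}$, so
$$\|\Theta\|_{L^{2p'}}^2\le \|\Theta\|_{L^2}^{2-\frac3p}\|\Theta\|_{L^6}^{\frac3p}\le C\|\Theta\|_{L^2(S_t)}^{2-\frac3p}\|\nabla\theta\|_{L^2(S_t)}^{\frac3p}.$$
The last step uses the Poincar\'e--Sobolev inequality (Lemma \ref{Lem2.2}) on $S_t$, whose constant is scale invariant, together with $\Theta=\theta-(\theta)_{S_t}$. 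This is exactly where the exponent $2-\frac3p$ appearing in $\tau_1,\tau_2$ comes from.

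Next I bound $\|\Theta\|_{L^2(S_t)}$ by Lemma \ref{Lem2.5}, taking $\rho=\frac{\sqrt2 t}{2}$ and $L=\sqrt2$ as in Lemma \ref{Lem3.3}. Raising to the power $2-\frac3p$ and splitting the sum gives three pieces:
$$\|u\|_{L^3}^{2-\frac3p}\|\nabla v\|_{L^2}^{2-\frac3p},\qquad t^{\tau_2}\|u\|_{L^3}^{2-\frac3p}\|v\|_{L^q}^{2-\frac3p},\qquad \|\nabla v\|_{L^2}^{2-\frac3p}.$$
In the first two I interpolate $\|u\|_{L^3}\le \|u\|_{L^p}^{\frac{p}{6-p}}\|u\|_{L^6}^{\frac{6-2p}{6-p}}$. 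The $L^p$ power of $u$ then totals $1+\frac{2p-3}{6-p}=\frac{p+3}{6-p}$, and the $L^6$ power is $\frac{6-2p}{6-p}(2-\frac3p)=\tau_1$. Both match the statement, and $t\sim R$ turns $t^{\tau_2}$ into $CR^{\tau_2}$.

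The final step, which needs the exponent bookkeeping, is a multi-factor Young inequality absorbing $\|u\|_{L^6}^2$, $\|\nabla v\|_{L^2}^2$ and $\|\nabla\theta\|_{L^2}^2$, each with coefficient $\frac1{16}$.
\begin{itemize}
\item For the first piece, write $\|\nabla v\|^{2-\frac3p}=\|\nabla v\|^{2-\frac3p-\tau_1-\varepsilon}\cdot\|\nabla v\|^{\tau_1+\varepsilon}$. The three factors $\|\nabla\theta\|^{3/p}$, $\|u\|_6^{\tau_1}$ and $\|\nabla v\|^{2-\frac3p-\tau_1-\varepsilon}$ have exponents summing to $2-\varepsilon$. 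Young with exponents $\frac{2}{3/p}$, $\frac{2}{\tau_1}$, $\cdots$ and remaining exponent $\frac2\varepsilon$ then yields the term $\big(\frac{C_\varepsilon}{t-s}\|u\|_p^{\frac{p+3}{6-p}}\|\nabla v\|^{\tau_1+\varepsilon}\big)^{2/\varepsilon}$. The restriction $\varepsilon<\frac{2p-3}{6-p}$ is precisely what keeps $2-\frac3p-\tau_1-\varepsilon=\frac{2p-3}{6-p}-\varepsilon$ positive.
\item For the second piece, only $\|\nabla\theta\|^{3/p}$ and $\|u\|_6^{\tau_1}$ are absorbed. Their exponents sum to $\frac3p+\tau_1=2-\frac{2p-3}{6-p}$, so the remainder is raised to $\frac{2(6-p)}{2p-3}=\frac{12-2p}{2p-3}$, as stated.
\item For the third piece, split $\|\nabla v\|^{2-\frac3p}=\|\nabla v\|^{2-\frac3p-\varepsilon}\|\nabla v\|^{\varepsilon}$ and absorb together with $\|\nabla\theta\|^{3/p}$. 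This gives the last term, with $\|u\|_p$ to the first power.
\end{itemize}
Collecting the absorbed pieces produces the constant $\frac3{16}$.

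The main obstacle is this last Young step. I must check that the exponents remain positive across the whole range, and note the degenerate case $p=3$, where $\tau_1=0$ and no $L^6$ factor is present. I expect the rest to be routine Hölder and interpolation.
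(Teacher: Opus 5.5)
Your proposal is correct and follows essentially the same route as the paper: H\"older with $\|u\|_{L^p}\|\Theta\|_{L^{2p'}}^2$, interpolation of $\Theta$ between $L^2$ and $L^6$, Lemma \ref{Lem2.5} with $\rho=\frac{\sqrt2 t}{2}$, $L=\sqrt2$, interpolation of $\|u\|_{L^3}$, and three Young applications with the same exponent splits. The only difference is that the paper treats $p=3$ separately via the H\"older split $\|u\|_{L^3}\|\Theta\|_{L^2}\|\Theta\|_{L^6}$, whereas your uniform argument recovers that case directly as the degenerate case $\tau_1=0$, $2p'=3$.
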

\begin{proof}
By the H\"older inequality, Lemma \ref{Lem2.5} and the Poincar\'{e}-Sobolev inequality, we have
\begin{align*}
J_5\leq&\frac{C}{t-s}\|u\|_{L^3(S_t)}\|\Theta\|_{L^2(S_t)}\|\Theta\|_{L^6(S_t)}\\
\leq&\frac{C}{t-s}\|u\|_{L^3(S_t)}^2\|\nabla v\|_{L^2(S_t)}\|\nabla\theta\|_{L^2(S_t)}+\frac{CR^{\frac{1}{2}-\frac{3}{q}}}{t-s}\|u\|_{L^3(S_t)}^2\|v\|_{L^q(S_t)}\|\nabla\theta\|_{L^2(S_t)}\\
&+\frac{C}{t-s}\|u\|_{L^3(S_t)}\|\nabla v\|_{L^2(S_t)}\|\nabla\theta\|_{L^2(S_t)} \\
:=&J_{51}+J_{52}+J_{53}.
\end{align*}

When $p=3$, using the Young inequality, we obtain
\begin{align*}
J_{51}+J_{53}&=\|\nabla v\|_{L^2(S_t)}^{1-\varepsilon}\cdot\|\nabla\theta\|_{L^2(S_t)}\cdot\frac{C}{t-s}\left(\|u\|_{L^3(S_t)}^2+\|u\|_{L^3(S_t)}\right)\|\nabla v\|_{L^2(S_t)}^\varepsilon\\
&\leq\frac{1}{16}\|\nabla v\|_{L^2(S_t)}^2+\frac{1}{16}\|\nabla\theta\|_{L^2(S_t)}^2+\left[\frac{C_\varepsilon}{t-s}\left(\|u\|_{L^3(S_t)}^2+\|u\|_{L^3(S_t)}\right)\|\nabla v\|_{L^2(S_t)}^\varepsilon\right]^\frac{2}{\varepsilon},\\
J_{52}&\leq\frac{1}{16}\|\nabla\theta\|_{L^2(S_t)}^2 +\frac{CR^{1-\frac{6}{q}}}{(t-s)^2}\|u\|_{L^3(S_t)}^4\|v\|_{L^q(S_t)}^2.
\end{align*}
Therefore, it follows that
\begin{align}
J_5\leq&\frac{1}{8}\|\nabla v\|_{L^2(S_t)}^2+\frac{1}{8}\|\nabla\theta\|_{L^2(S_t)}^2+\left[\frac{C_\varepsilon}{t-s}\left(\|u\|_{L^3(S_t)}^2+\|u\|_{L^3(S_t)}\right)\|\nabla v\|_{L^2(S_t)}^\varepsilon\right]^\frac{2}{\varepsilon}\notag\\
&+\frac{CR^{1-\frac{6}{q}}}{(t-s)^2}\|u\|_{L^3(S_t)}^4\|v\|_{L^q(S_t)}^2.\label{ine3.24}
\end{align}

When $\frac{3}{2} < p < 3$, employing the H\"{o}lder inequality, the interpolation inequality, Lemma \ref{Lem2.5} and the Poincar\'{e}-Sobolev inequality, we get
\begin{align*}
J_5\leq&\frac{C}{t-s}\|u\|_{L^p(S_t)}\|\Theta\|_{L^{2p'}(S_t)}^2\\
\leq&\frac{C}{t-s}\|u\|_{L^p(S_t)}\|\Theta\|_{L^2(S_t)}^{2-\frac{3}{p}}\|\Theta\|_{L^6(S_t)}^{\frac{3}{p}}\\
\leq&\frac{C}{t-s}\|u\|_{L^p(S_t)}\left[\|u\|_{L^3(S_t)}\left(\|\nabla v\|_{L^2(S_t)}+t^{\frac{1}{2}-\frac{3}{q}}\|v\|_{L^q(S_t)}\right)+\|\nabla v\|_{L^2(S_t)}\right]^{2-\frac{3}{p}}\|\nabla\theta\|_{L^2(S_t)}^{\frac{3}{p}}\\
\leq&\frac{C}{t-s}\|u\|_{L^p(S_t)}\left(\|u\|_{L^3(S_t)}\|\nabla v\|_{L^2(S_t)}\right)^{2-\frac{3}{p}}\|\nabla\theta\|_{L^2(S_t)}^{\frac{3}{p}}\\
&+\frac{C}{t-s}\|u\|_{L^p(S_t)}\left(t^{\frac{1}{2}-\frac{3}{q}}\|u\|_{L^3(S_t)}\|v\|_{L^q(S_t)}\right)^{2-\frac{3}{p}}\|\nabla\theta\|_{L^2(S_t)}^{\frac{3}{p}}\\
&+\frac{C}{t-s}\|u\|_{L^p(S_t)}\|\nabla v\|_{L^2(S_t)}^{2-\frac{3}{p}}\|\nabla\theta\|_{L^2(S_t)}^{\frac{3}{p}}\\
=&:J_{54}+J_{55}+J_{56}.
\end{align*}
By the interpolation inequality and the Young inequality, we have
\begin{align*}
J_{54}&\leq\frac{C}{t-s}\|u\|_{L^p(S_t)}\left(\|u\|_{L^p(S_t)}^{\frac{p}{6-p}}\|u\|_{L^6(S_t)}^{\frac{6-2p}{6-p}}\|\nabla v\|_{L^2(S_t)}\right)^{2-\frac{3}{p}}\|\nabla\theta\|_{L^2(S_t)}^{\frac{3}{p}}\\
&=\|u\|_{L^6(S_t)}^{\tau_1}\cdot\|\nabla v\|_{L^2(S_t)}^{\frac{2p-3}{6-p}-\varepsilon}\cdot\|\nabla\theta\|_{L^2(S_t)}^{\frac{3}{p}}\cdot\frac{C}{t-s}\|u\|_{L^p(S_t)}^\frac{3+p}{6-p}\|\nabla v\|_{L^2(S_t)}^{\tau_1+\varepsilon}\\
&\leq\frac{1}{16}\|u\|_{L^6(S_t)}^2+\frac{1}{16}\|\nabla v\|_{L^2(S_t)}^2+\frac{1}{16}\|\nabla\theta\|_{L^2(S_t)}^2+\left(\frac{C_\varepsilon}{t-s}\|u\|_{L^p(S_t)}^\frac{3+p}{6-p}\|\nabla v\|_{L^2(S_t)}^{\tau_1+\varepsilon}\right)^\frac{2}{\varepsilon},
\end{align*}
\begin{align*}
J_{55}&\leq\frac{C}{t-s}\|u\|_{L^p(S_t)}\left(t^{\frac{1}{2}-\frac{3}{q}}\|u\|_{L^p(S_t)}^{\frac{p}{6-p}}\|u\|_{L^6(S_t)}^{\frac{6-2p}{6-p}}\|v\|_{L^q(S_t)} \right)^{2-\frac{3}{p}}\|\nabla\theta\|_{L^2(S_t)}^{\frac{3}{p}}\\
&=\|u\|_{L^6(S_t)}^{\tau_1}\cdot\|\nabla\theta\|_{L^2(S_t)}^{\frac{3}{p}}\cdot\frac{CR^{\tau_2}}{t-s}\|u\|_{L^p(S_t)}^\frac{3+p}{6-p}\|v\|_{L^q(S_t)}^{2-\frac{3}{p}}\\
&\leq\frac{1}{16}\|u\|_{L^6(S_t)}^2+\frac{1}{16}\|\nabla\theta\|_{L^2(S_t)}^2+\left(\frac{C R^{\tau_2}}{t-s}\|u\|_{L^p(S_t)}^\frac{3+p}{6-p}\|v\|_{L^q(S_t)}^{2-\frac{3}{p}}\right)^\frac{12-2p}{2p-3},
\end{align*}
\begin{align*}
J_{56}&=\|\nabla v\|_{L^2(S_t)}^{2-\frac{3}{p}-\varepsilon}\cdot\|\nabla\theta\|_{L^2(S_t)}^{\frac{3}{p}}\cdot\frac{C}{t-s}\|u\|_{L^p(S_t)}\|\nabla v\|_{L^2(S_t)}^{\varepsilon}\\
&\leq\frac{1}{16}\|\nabla v\|_{L^2(S_t)}^2+\frac{1}{16}\|\nabla\theta\|_{L^2(S_t)}^2+\left(\frac{C_\varepsilon}{t-s}\|u\|_{L^p(S_t)}\|\nabla v\|_{L^2(S_t)}^{\varepsilon}\right)^\frac{2}{\varepsilon}.
\end{align*}
Therefore, we conclude
\begin{align}
	J_5\leq&\frac{3}{16}\left(\|u\|_{L^6(S_t)}^2+\|\nabla v\|_{L^2(S_t)}^2+\|\nabla \theta\|_{L^2(S_t)}^2\right)+\left(\frac{C_\varepsilon}{t-s}\|u\|_{L^p(S_t)}^\frac{p+3}{6-p}\|\nabla v\|_{L^2(S_t)}^{\tau_1+\varepsilon}\right)^\frac{2}{\varepsilon}\notag\\
	&+\left(\frac{CR^{\tau_2}}{t-s}\|u\|_{L^p(S_t)}^\frac{p+3}{6-p}\|v\|_{L^q(S_t)}^{2-\frac{3}{p}}\right)^\frac{12-2p}{2p-3}+\left(\frac{C_\varepsilon}{t-s}\| u\|_{L^p(S_t)}\| \nabla v\|_{L^2(S_t)}^{\varepsilon}\right)^\frac{2}{\varepsilon}.\label{ine3.25}
\end{align}
Combining \eqref{ine3.24} and \eqref{ine3.25}, we see that \eqref{ine3.23} holds.
\end{proof}

\begin{Lem}\label{Lem3.7}
	Let $\sqrt{2}R\leq s<t\leq 2R$.
	Suppose that $u,v,\theta$ are smooth vector-valued functions. Then we have
\begin{itemize}
\item[(i)] For $(p,q)\in\left(\frac{3}{2},3\right]\times [1,2]$, it holds that
\begin{align}
	J_6 \leq&\frac{C}{t-s}\|u\|_{L^p(A_R)}^{\frac{p}{6-p}}\|v\|_{L^q(A_R)}^{\frac{2q}{6-q}} \|u\|_{L^6(A_R)}^{\frac{6-2p}{6-p}}  \|v\|_{L^6(A_R)}^{\frac{6-3q}{6-q}}\|\nabla v\|_{L^2(A_R)}\notag\\
	&+\frac{C}{t-s}\|v\|_{L^q(A_R)}^{\frac{2q}{6-q}} \|v\|_{L^6(A_R)}^{\frac{6-3q}{6-q}}\|\nabla v\|_{L^2(A_R)}\label{ine3.26a}\\
	&+\frac{C R^{\frac{1}{2}-\frac{3}{q}}}{t-s} \|u\|_{L^p(A_R)}^{\frac{p}{6-p}}\|v\|_{L^q(A_R)}^{\frac{6+q}{6-q}}\|u\|_{L^6(A_R)}^{\frac{6-2p}{6-p}} \|v\|_{L^6(A_R)}^{\frac{6-3q}{6-q}}.\notag
\end{align}
		\item[(ii)] For $(p,q)\in\left(\frac{3}{2},3\right]\times [1,2]$ and any $\varepsilon\in\left(0,2-\frac{6-2p}{6-p}-\frac{6-3q}{6-q}\right)$, there exist positive constants $C_\varepsilon$ and $C$ such that
		\begin{align}
				J_{6} \leq& \frac{3}{16}\left(\|u\|_{L^6(S_t)}^2 + \|v\|_{L^6(S_t)}^2 + \|\nabla v\|_{L^2(S_t)}^2\right)
				+\chi_1(p,q)\left(\frac{C}{t-s}\|u\|_{L^p(S_t)}^{\frac{p}{6-p}} \|v\|_{L^q(S_t)}^{\frac{2q}{6-q}}\right)^{\tau_3}\notag\\
&+\chi_2(p,q)\left( \frac{C_\varepsilon}{t-s}\|u\|_{L^p(S_t)}^{\frac{p}{6-p}} \|v\|_{L^q(S_t)}^{\frac{2q}{6-q}} \|\nabla v\|_{L^2(S_t)}^{\left(\frac{6-2p}{6-p}+\frac{6-3q}{6-q}-1+\varepsilon\right)\chi_2(p,q)} \right)^{\frac{2}{\varepsilon}}\label{ine3.27a}\\
				&+\left( \frac{C R^{\frac{1}{2}-\frac{3}{q}}}{t-s} \|u\|_{L^p(S_t)}^{\frac{p}{6-p}} \|v\|_{L^q(S_t)}^{\frac{6+q}{6-q}} \right)^{\tau_4}+ \left( \frac{C}{t-s}  \|v\|_{L^q(S_t)}^{\frac{2q}{6-q}} \right)^{\frac{6-q}{q}},\notag
		\end{align}
where $\chi_1(p,q)$ is defined by
\begin{equation*}
	\chi_1(p,q)=
	\begin{cases}
		1, & \text{ if  }\frac{6-2p}{6-p} + \frac{6-3q}{6-q} < 1, \\
		0, & \text{ if  }\frac{6-2p}{6-p} + \frac{6-3q}{6-q} \geq 1,
	\end{cases}
\end{equation*}
$\chi_2(p,q)=1-\chi_1(p,q)$, $\tau_3$ and $\tau_4$ are given by
$$
\tau_3=2\left[1-\left(\frac{6-2p}{6-p}+ \frac{6-3q}{6-q}\right)\chi_1(p,q)\right]^{-1},\;\tau_4=2\left(2-\frac{6-2p}{6-p} - \frac{6-3q}{6-q}\right)^{-1}.
$$
\end{itemize}
	
\end{Lem}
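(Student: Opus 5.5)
The plan is to start from $J_6=\frac{C}{t-s}\int_{S_t}|v||\Theta|\,dx$ and estimate it by the H\"older inequality as $\frac{C}{t-s}\|v\|_{L^2(S_t)}\|\Theta\|_{L^2(S_t)}$. The factor $\|\Theta\|_{L^2(S_t)}$ is handled by Lemma \ref{Lem2.5} with $\rho=\frac{\sqrt{2}t}{2}$ and $L=\sqrt{2}$:
$$\|\Theta\|_{L^2(S_t)}\le C\|u\|_{L^3(S_t)}\|\nabla v\|_{L^2(S_t)}+C t^{\frac12-\frac3q}\|u\|_{L^3(S_t)}\|v\|_{L^q(S_t)}+C\|\nabla v\|_{L^2(S_t)}.$$
Next, since $\frac32<p\le3\le6$ and $1\le q\le2\le6$, interpolation gives
$$\|u\|_{L^3}\le\|u\|_{L^p}^{\frac{p}{6-p}}\|u\|_{L^6}^{\frac{6-2p}{6-p}},\qquad \|v\|_{L^2}\le\|v\|_{L^q}^{\frac{2q}{6-q}}\|v\|_{L^6}^{\frac{6-3q}{6-q}}.$$
The exponents check out: for $u$, $\frac{1}{3}=\frac{\vartheta}{p}+\frac{1-\vartheta}{6}$ gives $\vartheta=\frac{p}{6-p}$, and similarly for $v$.

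Substituting these into the product produces exactly three terms, matching the three lines of \eqref{ine3.26a}. In the third term, $\|v\|_{L^q}^{\frac{2q}{6-q}}\cdot\|v\|_{L^q}=\|v\|_{L^q}^{\frac{6+q}{6-q}}$, and $t\sim R$ handles the power of $t$. Enlarging $S_t\subset A_R$ (note $\frac{\sqrt2 t}{2}\ge R$ and $t\le2R$) then gives part (i).

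For part (ii), I keep the norms on $S_t$ and apply Young's inequality to each of the three terms, absorbing powers of $\|u\|_{L^6}$, $\|v\|_{L^6}$ and $\|\nabla v\|_{L^2}$ into $\frac1{16}$ of their squares.

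The last term is $\frac{C}{t-s}\|v\|_{L^q}^{\frac{2q}{6-q}}\|v\|_{L^6}^{a}\|\nabla v\|_{L^2}$ with $a=\frac{6-3q}{6-q}$. Its total "absorbed" power is $a+1$. Since $\frac a2+\frac12<1$, three-factor Young with remaining exponent $\big(1-\frac a2-\frac12\big)^{-1}=\frac{6-q}{q}$ gives the term $\left(\frac{C}{t-s}\|v\|_{L^q}^{\frac{2q}{6-q}}\right)^{\frac{6-q}{q}}$.

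The third term carries the absorbed power $\|u\|_{L^6}^{b}\|v\|_{L^6}^{a}$ with $b=\frac{6-2p}{6-p}$. Since $\frac{a+b}{2}<1$ (indeed $a+b<2$ always, because $b<\frac23$ for $p>\frac32$ and $a\le1$), Young gives exponent $\big(1-\frac{a+b}2\big)^{-1}=\tau_4$.

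The first term is the delicate one: it contains $\|u\|_{L^6}^b\|v\|_{L^6}^a\|\nabla v\|_{L^2}$, with total power $a+b+1$.

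If $a+b<1$, I absorb all three factors, using the exponents $\frac2b,\frac2a,2$ and the remaining exponent $\big(1-\frac{a+b+1}{2}\big)^{-1}=\frac{2}{1-(a+b)}=\tau_3$. This produces the $\chi_1$ term.

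If $a+b\ge1$, full absorption is impossible. Instead I write $\|\nabla v\|_{L^2}=\|\nabla v\|_{L^2}^{2-a-b-\varepsilon}\cdot\|\nabla v\|_{L^2}^{a+b-1+\varepsilon}$. The first piece is absorbed together with $\|u\|_{L^6}^b\|v\|_{L^6}^a$; their total power over 2 is $\frac{2-\varepsilon}{2}<1$, so the remaining exponent is $\frac2\varepsilon$. The leftover power $a+b-1+\varepsilon$ stays inside the bracket, giving the $\chi_2$ term. This requires $2-a-b-\varepsilon>0$, which is precisely the stated range of $\varepsilon$.

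Summing the three absorbed contributions gives $\frac3{16}(\dots)$, and the proof is complete. The main obstacle is only the bookkeeping of Young exponents in this case split for the first term.
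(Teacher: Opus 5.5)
Your proposal is correct and follows the paper's proof. Both use H\"older plus Lemma \ref{Lem2.5} on $S_t$, then interpolate $\|u\|_{L^3}$ and $\|v\|_{L^2}$ between $L^p$ (resp. $L^q$) and $L^6$, then apply weighted Young's inequality with exactly the exponents $\tau_3$, $\tau_4$, $\frac{6-q}{q}$ and $\frac{2}{\varepsilon}$, including the same splitting $\|\nabla v\|_{L^2}=\|\nabla v\|_{L^2}^{2-a-b-\varepsilon}\|\nabla v\|_{L^2}^{a+b-1+\varepsilon}$ when $a+b\geq1$. The only difference is that the paper treats the endpoint cases $p=3$ and/or $q=2$ separately, while you handle them uniformly; this is fine provided you note that when $b=0$ or $a=0$ the corresponding $L^6$ factor is simply absent from the Young step.
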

\begin{proof}
By the H\"older inequality and Lemma \ref{Lem2.5}, we have
	\begin{align}
		J_6&\leq\frac{C}{t-s}\|v\|_{L^2(S_t)}\|\Theta\|_{L^2(S_t)}\notag\\
		&\leq\frac{C}{t-s}\|v\|_{L^2(S_t)}\left(\|u\|_{L^3(S_t)}\|\nabla v\|_{L^2(S_t)} +t^{\frac{1}{2}-\frac{3}{q}}\|u\|_{L^3(S_t)}\|v\|_{L^q(S_t)}+\|\nabla v\|_{L^2(S_t)}\right).\label{ine3.29a}
	\end{align}
	
	When $p=3$ and $q=2$, by the Young inequality,  we obtain
	\begin{align}\label{ine3.30a}
		J_{6}
		\leq\frac{1}{16}\|\nabla v\|_{L^2(S_t)}^2+	\frac{C}{(t-s)^2}\|v\|_{L^2(S_t)}^2 \left(\|u\|_{L^3(S_t)}^2+1\right) + \frac{C R^{-1}}{t-s}\|u\|_{L^3(S_t)}\|v\|_{L^2(S_t)}^2.
	\end{align}

When $p\in\left(\frac{3}{2},3\right)$ and $q=2$, using the interpolation inequality,  we get
\begin{align}
	J_6 &\leq \frac{C}{t-s}\|v\|_{L^2(S_t)} \cdot
	\|u\|_{L^p(S_t)}^{\frac{p}{6-p}} \|u\|_{L^6(S_t)}^{\frac{6-2p}{6-p}} \|\nabla v\|_{L^2(S_t)}
	+\frac{C}{t-s}\|v\|_{L^2(S_t)} \cdot \|\nabla v\|_{L^2(S_t)}\notag\\
	&+\frac{C}{t-s}\|v\|_{L^2(S_t)}\cdot R^{-1}\|u\|_{L^p(S_t)}^{\frac{p}{6-p}} \|u\|_{L^6(S_t)}^{\frac{6-2p}{6-p}} \|v\|_{L^2(S_t)}\label{ine3.31a}\\
	&=: J_{61} + J_{62} + J_{63}.\notag
\end{align}
Using the Young inequality, we have
\begin{align*}
J_{61}\leq& \frac{1}{16}\|u\|_{L^6(S_t)}^2 + \frac{1}{16}\|\nabla v\|_{L^2(S_t)}^2 + \left( \frac{C}{t-s} \|u\|_{L^p(S_t)}^{\frac{p}{6-p}} \|v\|_{L^2(S_t)} \right)^{\frac{12-2p}{p}},\\
J_{62}\leq& \frac{1}{16}\|\nabla v\|_{L^2(S_t)}^2 + \frac{C}{(t-s)^2}\|v\|_{L^2(S_t)}^2,\\
J_{63}\leq& \frac{1}{16}\|u\|_{L^6(S_t)}^2 + \left( \frac{C R^{-1}}{t-s} \|u\|_{L^p(S_t)}^{\frac{p}{6-p}} \|v\|_{L^2(S_t)}^2 \right)^{\frac{6-p}{3}}.
\end{align*}
Thus, it follows that
\begin{align}
J_6 \leq& \frac{1}{8}\left(\|u\|_{L^6(S_t)}^2 + \|\nabla v\|_{L^2(S_t)}^2\right)
		+ \left( \frac{C}{t-s} \|u\|_{L^p(S_t)}^{\frac{p}{6-p}} \|v\|_{L^2(S_t)} \right)^{\frac{12-2p}{p}}\notag\\
&+ \frac{C}{(t-s)^2}\|v\|_{L^2(S_t)}^2+ \left( \frac{C R^{-1}}{t-s} \|u\|_{L^p(S_t)}^{\frac{p}{6-p}} \|v\|_{L^2(S_t)}^2 \right)^{\frac{6-p}{3}}.\label{ine3.32a}
\end{align}

	When $p=3$ and $q\in[1,2)$, applying the interpolation inequality to \eqref{ine3.29a}, we obtain
	\begin{align}
		J_6 \leq& \frac{C}{t-s}\|v\|_{L^q(S_t)}^{\frac{2q}{6-q}} \|v\|_{L^6(S_t)}^{\frac{6-3q}{6-q}} \left[\left(\|u\|_{L^3(S_t)}+1\right)\|\nabla v\|_{L^2(S_t)} + R^{\frac{1}{2}-\frac{3}{q}}\|u\|_{L^3(S_t)}\|v\|_{L^q(S_t)}\right].\label{ine3.33a}
	\end{align}
Then it follows from the Young inequality that
	\begin{align}
		J_6 \leq& \frac{1}{16}\left( \|v\|_{L^6(S_t)}^2 + \|\nabla v\|_{L^2(S_t)}^2 \right)
		+ \left[ \frac{C}{t-s}\left(\|u\|_{L^3(S_t)}+1\right)\|v\|_{L^q(S_t)}^{\frac{2q}{6-q}} \right]^{\frac{6-q}{q}}\notag\\
		&+\frac{1}{16}\|v\|_{L^6(S_t)}^2 + \left( \frac{C R^{\frac{1}{2}-\frac{3}{q}}}{t-s}\|u\|_{L^3(S_t)}\|v\|_{L^q(S_t)}^{\frac{6+q}{6-q}} \right)^{\frac{12-2q}{6+q}}\notag\\
       \leq& \frac{1}{8}\left( \|v\|_{L^6(S_t)}^2 + \|\nabla v\|_{L^2(S_t)}^2 \right)
		+ \left[ \frac{C}{t-s}\left(\|u\|_{L^3(S_t)}+1\right)\|v\|_{L^q(S_t)}^{\frac{2q}{6-q}} \right]^{\frac{6-q}{q}}\label{ine3.34a}\\
		&+ \left( \frac{C R^{\frac{1}{2}-\frac{3}{q}}}{t-s}\|u\|_{L^3(S_t)}\|v\|_{L^q(S_t)}^{\frac{6+q}{6-q}} \right)^{\frac{12-2q}{6+q}}. \notag
	\end{align}

	When  $p\in\left(\frac{3}{2},3\right)$ and $q\in[1,2)$, by the interpolation inequality,  we get
\begin{align}
	J_6 \leq& \frac{C}{t-s}\|v\|_{L^q(S_t)}^{\frac{2q}{6-q}} \|v\|_{L^6(S_t)}^{\frac{6-3q}{6-q}}
	\|u\|_{L^p(S_t)}^{\frac{p}{6-p}} \|u\|_{L^6(S_t)}^{\frac{6-2p}{6-p}} \|\nabla v\|_{L^2(S_t)}\notag\\
	&+\frac{CR^{\frac{1}{2}-\frac{3}{q}}}{t-s}\|v\|_{L^q(S_t)}^{\frac{6+q}{6-q}} \|v\|_{L^6(S_t)}^{\frac{6-3q}{6-q}}
	  \|u\|_{L^p(S_t)}^{\frac{p}{6-p}} \|u\|_{L^6(S_t)}^{\frac{6-2p}{6-p}}\notag\\
&+\frac{C}{t-s}\|v\|_{L^q(S_t)}^{\frac{2q}{6-q}} \|v\|_{L^6(S_t)}^{\frac{6-3q}{6-q}}\|\nabla v\|_{L^2(S_t)}\label{ine3.35a}\\
	&\mathrel{=:} J_{64} + J_{65} + J_{66}.\notag
\end{align}
By the Young inequality, we derive the estimates for $J_{65}$ and $J_{66}$:
\begin{align}
J_{65} \leq& \frac{1}{16}\left(\|u\|_{L^6(S_t)}^2 + \|v\|_{L^6(S_t)}^2\right)
+ \left( \frac{C R^{\frac{1}{2}-\frac{3}{q}}}{t-s} \|u\|_{L^p(S_t)}^{\frac{p}{6-p}} \|v\|_{L^q(S_t)}^{\frac{6+q}{6-q}} \right)^{\tau_4},\label{ine3.36a}\\
J_{66} \leq&\frac{1}{16}\left(\|v\|_{L^6(S_t)}^2 + \|\nabla v\|_{L^2(S_t)}^2\right)
+ \left( \frac{C}{t-s}  \|v\|_{L^q(S_t)}^{\frac{2q}{6-q}} \right)^{\frac{6-q}{q}}.\label{ine3.37a}
\end{align}
For the estimate of $J_{64}$, we split our analysis into two cases according to the relationship between parameters $p$ and $q$.
If $\frac{6-2p}{6-p} + \frac{6-3q}{6-q} < 1$, employing the Young inequality, we have
\begin{align}
J_{64} \leq \frac{1}{16}\left(\|u\|_{L^6(S_t)}^2 + \|v\|_{L^6(S_t)}^2 + \|\nabla v\|_{L^2(S_t)}^2\right)
+ \left(\frac{C}{t-s}\|u\|_{L^p(S_t)}^{\frac{p}{6-p}} \|v\|_{L^q(S_t)}^{\frac{2q}{6-q}}\right)^{\tau_3}.\label{ine3.38a}
\end{align}
If $\frac{6-2p}{6-p} + \frac{6-3q}{6-q} \geq 1$, using the Young inequality, we obtain
\begin{align}
	J_{64} =&\|u\|_{L^6(S_t)}^{\frac{6-2p}{6-p}} \|v\|_{L^6(S_t)}^{\frac{6-3q}{6-q}} \|\nabla v\|_{L^2(S_t)}^{2-\frac{6-2p}{6-p}-\frac{6-3q}{6-q}-\varepsilon} \cdot \frac{C}{t-s}\|u\|_{L^p(S_t)}^{\frac{p}{6-p}} \|v\|_{L^q(S_t)}^{\frac{2q}{6-q}} \|\nabla v\|_{L^2(S_t)}^{\frac{6-2p}{6-p}+\frac{6-3q}{6-q}-1+\varepsilon} \notag\\
	\leq&\frac{1}{16}\left(\|u\|_{L^6(S_t)}^2 + \|v\|_{L^6(S_t)}^2 + \|\nabla v\|_{L^2(S_t)}^2\right)\label{ine3.40a}\\
	&+ \left( \frac{C_\varepsilon}{t-s}\|u\|_{L^p(S_t)}^{\frac{p}{6-p}} \|v\|_{L^q(S_t)}^{\frac{2q}{6-q}} \|\nabla v\|_{L^2(S_t)}^{\frac{6-2p}{6-p}+\frac{6-3q}{6-q}-1+\varepsilon} \right)^{\frac{2}{\varepsilon}}.\notag
\end{align}
Combining \eqref{ine3.35a}, \eqref{ine3.36a}, \eqref{ine3.37a}, \eqref{ine3.38a} and \eqref{ine3.40a}, we conclude
\begin{align}
				J_{6} \leq& \frac{3}{16}\left(\|u\|_{L^6(S_t)}^2 + \|v\|_{L^6(S_t)}^2 + \|\nabla v\|_{L^2(S_t)}^2\right)
				+\chi_1(p,q)\left(\frac{C}{t-s}\|u\|_{L^p(S_t)}^{\frac{p}{6-p}} \|v\|_{L^q(S_t)}^{\frac{2q}{6-q}}\right)^{\tau_3}\notag\\
&+\chi_2(p,q)\left( \frac{C_\varepsilon}{t-s}\|u\|_{L^p(S_t)}^{\frac{p}{6-p}} \|v\|_{L^q(S_t)}^{\frac{2q}{6-q}} \|\nabla v\|_{L^2(S_t)}^{\left(\frac{6-2p}{6-p}+\frac{6-3q}{6-q}-1+\varepsilon\right)\chi_2(p,q)} \right)^{\frac{2}{\varepsilon}}\label{ine3.40}\\
				&+\left( \frac{C R^{\frac{1}{2}-\frac{3}{q}}}{t-s} \|u\|_{L^p(S_t)}^{\frac{p}{6-p}} \|v\|_{L^q(S_t)}^{\frac{6+q}{6-q}} \right)^{\tau_4}+ \left( \frac{C}{t-s}  \|v\|_{L^q(S_t)}^{\frac{2q}{6-q}} \right)^{\frac{6-q}{q}}.\notag
\end{align}

Combining \eqref{ine3.29a}, \eqref{ine3.31a}, \eqref{ine3.33a} and \eqref{ine3.35a}, we find that \eqref{ine3.26a} holds.
In view of \eqref{ine3.30a}, \eqref{ine3.32a}, \eqref{ine3.34a} and \eqref{ine3.40}, we obtain \eqref{ine3.27a}.
\end{proof}

With the above preparations, we proceed to prove Theorem \ref{main1}.
\begin{proof}[{\bf Proof of Theorem \ref{main1}}]
{\bf Assume that \eqref{c1.3}  holds.} Then there exists a sequence $R_j\nearrow+\infty$ such that
\begin{align}\label{ine3.41}
\lim\limits_{j\rightarrow+\infty}X_{p,\alpha}(R_j)<+\infty,\;\lim\limits_{j\rightarrow+\infty}Y_{q,\beta}(R_j)<+\infty,\;\lim\limits_{j\rightarrow+\infty}Y'_{2,\gamma}(R_j)<+\infty.
\end{align}
For convenience, we introduce the following eight exponents:
\begin{align*}
&\delta_1=\frac{2p}{6-p}\alpha+2\beta-\frac{6}{q}-1,\quad\delta_2(\varepsilon)=\frac{2p}{6-p}\alpha+\left(\frac{12-4p}{6-p}+\varepsilon\right)\gamma-2,\\
&\delta_3=\alpha+\frac{(4p-6)q}{(6-q)p}\beta-1,\quad\delta_4(\varepsilon)=\frac{p+3}{6-p}\alpha+(\tau_1+\varepsilon)\gamma-1,\\
&\delta_5=\frac{p+3}{6-p}\alpha+\left(2-\frac{3}{p}\right)\beta+\tau_2-1,\quad\delta_6=\frac{p}{6-p}\alpha+\frac{2q}{6-q}\beta-1,\\
&\delta_7(\varepsilon)=\frac{p}{6-p}\alpha+\frac{2q}{6-q}\beta+\left(\frac{6-2p}{6-p}+\frac{6-3q}{6-q}-1+\varepsilon\right)\chi_2(p,q)\gamma-1,\\
&\delta_8=\frac{p}{6-p}\alpha+\frac{6+q}{6-q}\beta-\frac{3}{q}-\frac{1}{2},
\end{align*}
where $\tau_1$ and $\tau_2$ are given in Lemma \ref{Lem3.6}. By straightforward calculations, we can verify nonpositivity of
$\delta_1$, $\delta_2(0)$, $\delta_3$, $\delta_4(0)$, $\delta_5$, $\delta_6$, $\delta_7(0)$, $\delta_8$. Indeed, we have
\begin{align}
&\delta_1\leq\frac{2p}{6-p}\left(\frac{2}{p}-\frac{1}{3}\right)+2\left(\frac{3}{q}-\frac{1}{2}\right)-\frac{6}{q}-1=-\frac{4}{3},\notag\\
&\delta_2(0)\begin{cases}
<\frac{2p}{6-p}\left(\frac{2}{p}-\frac{1}{3}\right)+\frac{12-4p}{6-p}\cdot\frac{6-p}{18-6p}-2=-\frac{2}{3}, & \text{ if }\;\frac{3}{2}<p<3, \\
\leq\frac{2p}{6-p}\left(\frac{2}{p}-\frac{1}{3}\right)-2=-\frac{4}{3}, & \text{ if }\;p=3,
\end{cases}\notag\\
&\delta_3=\alpha+\frac{2p-3}{p}\cdot\frac{2q}{6-q}\beta-1\leq\alpha+\frac{2p-3}{p}\cdot\left(1-\frac{p}{6-p}\alpha\right)-1\notag\\
&\;\;\;=\frac{2p-3}{p}+\frac{9-3p}{6-p}\alpha-1\leq\frac{2p-3}{p}+\frac{9-3p}{6-p}\left(\frac{2}{p}-\frac{1}{3}\right)-1=0,\notag\\
&\delta_4(0)\begin{cases}
<\frac{p+3}{6-p}\left(\frac{2}{p}-\frac{1}{3}\right)+\frac{6-2p}{6-p}\left(2-\frac{3}{p}\right)\cdot\frac{6-p}{18-6p}-1=0, & \text{ if }\;\frac{3}{2}<p<3, \\
\leq\frac{p+3}{6-p}\left(\frac{2}{p}-\frac{1}{3}\right)-1=-\frac{2p-3}{3p}=-\frac{1}{3}, & \text{ if }\;p=3,
\end{cases}\label{ine3.42}\\
&\delta_5=\frac{p+3}{6-p}\alpha+\left(2-\frac{3}{p}\right)\cdot\left[\beta-\left(\frac{3}{q}-\frac{1}{2}\right)\right]-1\leq\frac{p+3}{6-p}\left(\frac{2}{p}-\frac{1}{3}\right)-1=-\frac{2p-3}{3p},\notag\\
&\delta_6\leq0,\notag\\
&\delta_7(0)\begin{cases}
		=\delta_6, & \text{ if  }\frac{6-2p}{6-p} + \frac{6-3q}{6-q} < 1, \\
		=\delta_6, & \text{ if  }\frac{6-2p}{6-p} + \frac{6-3q}{6-q} \geq 1 \text{ and  }\gamma=0,\\
           <0, & \text{ if  }\frac{6-2p}{6-p} + \frac{6-3q}{6-q} \geq 1 \text{ and  }\gamma>0,
	\end{cases}\notag\\
&\delta_8\leq1-\frac{2q}{6-q}\beta+\frac{6+q}{6-q}\beta-\frac{3}{q}-\frac{1}{2}=\beta-\left(\frac{3}{q}-\frac{1}{2}\right)\leq0.\notag
\end{align}
Hence, we can choose a sufficiently small positive number $\varepsilon$ such that
\begin{align}\label{ine3.43}
\delta_2(\varepsilon)<0,\quad \delta_4(\varepsilon)<0,\quad \varepsilon\gamma-2<0,\quad \alpha+\varepsilon\gamma-1<0,
\end{align}
and
\begin{align}\label{ine3.43a}
\delta_7(\varepsilon)\begin{cases}
		=\delta_6\leq0, & \text{ if  }\frac{6-2p}{6-p} + \frac{6-3q}{6-q} < 1, \\
		=\delta_6\leq0, & \text{ if  }\frac{6-2p}{6-p} + \frac{6-3q}{6-q} \geq 1 \text{ and  }\gamma=0,\\
         =\delta_7(0)+\varepsilon\gamma <0, & \text{ if  }\frac{6-2p}{6-p} + \frac{6-3q}{6-q} \geq 1 \text{ and  }\gamma>0.
	\end{cases}
\end{align}
Combining  \eqref{ine3.2}, \eqref{ine3.9}, \eqref{ine3.13}, \eqref{ine3.18}, \eqref{ine3.21}, \eqref{ine3.23} and \eqref{ine3.27a}, we derive that
\begin{align*}
f(s)\leq&\frac{5}{8}f(t)+\frac{C}{(t-s)^{\frac{6}{p}-1}}\|u\|_{L^p(A_{R})}^{2}+\frac{C}{(t-s)^2}R^{3-\frac{6}{p}}\|u\|_{L^{p}(A_{R})}^{2}+\frac{C}{(t-s)^{\frac{6}{q}-1}}\|v\|_{L^{q}(A_{R})}^{2}\\
&+\left(\frac{C R^{1-\frac{6}{q}}}{(t-s)^2}\|u\|_{L^p(A_R)}^\frac{2p}{6-p}\|v\|_{L^q(A_R)}^2\right)^\frac{6-p}{p}+\left(\frac{C_\varepsilon}{(t-s)^2}\|u\|_{L^p(A_R)}^\frac{2p}{6-p}\|\nabla v\|_{L^2(A_R)}^{\frac{12-4p}{6-p}+\varepsilon}\right)^\frac{2}{\varepsilon}\\
&+\left(\frac{C_\varepsilon}{(t-s)^2}\|\nabla v\|_{L^2(A_R)}^\varepsilon\right)^\frac{2}{\varepsilon}+\left(\frac{C}{(t-s)^{\frac{2}{p}-\frac{1}{3}}}\|u\|_{L^p(A_{R})}\right)^\frac{3p}{2p-3}\\
&+\left(\frac{C}{t-s}\|u\|_{L^p(A_R)}\|v\|_{L^q(A_R)}^\frac{(4p-6)q}{(6-q)p}\right)^\frac{(6-q)p}{(2p-3)q}+\left(\frac{C_\varepsilon}{t-s}\|u\|_{L^p(A_R)}^\frac{p+3}{6-p}\|\nabla v\|_{L^2(A_R)}^{\tau_1+\varepsilon}\right)^\frac{2}{\varepsilon}\\
&+\left(\frac{C R^{\tau_2}}{t-s}\|u\|_{L^p(A_R)}^\frac{p+3}{6-p}\|v\|_{L^q(A_R)}^{2-\frac{3}{p}}\right)^\frac{12-2p}{2p-3}+\left(\frac{C_\varepsilon}{t-s}\| u\|_{L^p(A_R)}\| \nabla v\|_{L^2(A_R)}^{\varepsilon}\right)^\frac{2}{\varepsilon}\\
&+\chi_1(p,q)\left(\frac{C}{t-s}\|u\|_{L^p(A_R)}^{\frac{p}{6-p}} \|v\|_{L^q(A_R)}^{\frac{2q}{6-q}}\right)^{\tau_3}\\
&+\chi_2(p,q)\left( \frac{C_\varepsilon}{t-s}\|u\|_{L^p(A_R)}^{\frac{p}{6-p}} \|v\|_{L^q(A_R)}^{\frac{2q}{6-q}} \|\nabla v\|_{L^2(A_R)}^{\left(\frac{6-2p}{6-p}+\frac{6-3q}{6-q}-1+\varepsilon\right)\chi_2(p,q)} \right)^{\frac{2}{\varepsilon}}\\
&+\left( \frac{C R^{\frac{1}{2}-\frac{3}{q}}}{t-s} \|u\|_{L^p(A_R)}^{\frac{p}{6-p}} \|v\|_{L^q(A_R)}^{\frac{6+q}{6-q}} \right)^{\tau_4}+ \left(\frac{C}{t-s} \|v\|_{L^q(A_R)}^{\frac{2q}{6-q}} \right)^{\frac{6-q}{q}}.
\end{align*}
Applying Lemma \ref{Lem2.4} to the above function inequality, and taking $s=\sqrt{2}R$ and $t=2R$, we conclude that
\begin{align}\label{ine3.44}
f\left(\sqrt{2}R\right)&\leq CR^{1-\frac{6}{p}+2\alpha}X^2_{p,\alpha}(R)+CR^{1-\frac{6}{q}+2\beta}Y^2_{q,\beta}(R)+\left(CR^{\delta_1}X^\frac{2p}{6-p}_{p,\alpha}(R)Y^2_{q,\beta}(R)\right)^\frac{6-p}{p}\notag\\
&+\left(C_\varepsilon R^{\delta_2(\varepsilon)}X^\frac{2p}{6-p}_{p,\alpha}(R)[Y'_{2,\gamma}(R)]^{\frac{12-4p}{6-p}+\varepsilon}\right)^\frac{2}{\varepsilon}+\left(C_\varepsilon R^{\varepsilon\gamma-2}[Y'_{2,\gamma}(R)]^{\varepsilon}\right)^\frac{2}{\varepsilon}\notag\\
&+\left(CR^{\alpha-\left(\frac{2}{p}-\frac{1}{3}\right)}X_{p,\alpha}(R)\right)^\frac{3p}{2p-3}+\left(CR^{\delta_3}X_{p,\alpha}(R)Y^\frac{(4p-6)q}{(6-q)p}_{q,\beta}(R)\right)^\frac{(6-q)p}{(2p-3)q}\notag\\
&+\left(C_\varepsilon R^{\delta_4(\varepsilon)}X^\frac{p+3}{6-p}_{p,\alpha}(R)[Y'_{2,\gamma}(R)]^{\tau_1+\varepsilon}\right)^\frac{2}{\varepsilon}+\left(CR^{\delta_5}X^\frac{p+3}{6-p}_{p,\alpha}(R)Y^{2-\frac{3}{p}}_{q,\beta}(R)\right)^\frac{12-2p}{2p-3}\\
&+\left[C_\varepsilon R^{\alpha+\varepsilon\gamma-1}X_{p,\alpha}(R)[Y'_{2,\gamma}(R)]^{\varepsilon}\right]^\frac{2}{\varepsilon}+\chi_1(p,q)\left(CR^{\delta_6}X^\frac{p}{6-p}_{p,\alpha}(R)Y^\frac{2q}{6-q}_{q,\beta}(R)\right)^{\tau_3}\notag\\
&+\chi_2(p,q)\left(C_\varepsilon R^{\delta_7(\varepsilon)}X^\frac{p}{6-p}_{p,\alpha}(R)Y^\frac{2q}{6-q}_{q,\beta}(R)[Y'_{2,\gamma}(R)]^{\left(\frac{6-2p}{6-p}+\frac{6-3q}{6-q}-1+\varepsilon\right)\chi_2(p,q)}\right)^\frac{2}{\varepsilon}\notag\\
&+\left(CR^{\delta_8}X^\frac{p}{6-p}_{p,\alpha}(R)Y^\frac{6+q}{6-q}_{q,\beta}(R)\right)^{\tau_4}+\left(CR^{\frac{2q}{6-q}\beta-1}Y^\frac{2q}{6-q}_{q,\beta}(R)\right)^\frac{6-q}{q}.\notag
\end{align}
Letting $R=R_j\rightarrow+\infty$, and using \eqref{ine3.41}, \eqref{ine3.42}, \eqref{ine3.43}, \eqref{ine3.43a} and the following facts
\begin{align*}
1-\frac{6}{p}+2\alpha<0,\;1-\frac{6}{q}+2\beta\leq0,\;\alpha-\left(\frac{2}{p}-\frac{1}{3}\right)\leq0,\;\frac{2q}{6-q}\beta-1\leq0,
\end{align*}
we obtain $u,v\in L^6(\mathbb{R}^3)$ and $\nabla u,\nabla v,\nabla \theta\in L^2(\mathbb{R}^3)$. Furthermore, it holds that
\begin{equation}\label{ine3.45}
\aligned
&\lim_{R\rightarrow+\infty}\left(\|u\|_{L^6(A_{R})}+\|v\|_{L^6(A_{R})}+\|\nabla u\|_{L^2(A_{R})}+\|\nabla v\|_{L^2(A_{R})}+\|\nabla \theta\|_{L^2(A_{R})}\right)=0.
\endaligned
\end{equation}
Combining  \eqref{ine3.2}, \eqref{ine3.10}, \eqref{ine3.14}, \eqref{ine3.17}, \eqref{ine3.20}, \eqref{ine3.23} and \eqref{ine3.26a}, and taking $s=\sqrt{2}R$ and $t=2R$, we have
\begin{align*}
f\left(\sqrt{2}R\right)\leq&C\left(\|\nabla u\|_{L^{2}(A_{R})}^2+\|\nabla v\|_{L^{2}(A_{R})}^2+\|\nabla\theta\|_{L^2(A_{R})}^2+\|u\|_{L^6(A_{R})}^2 +\|v\|_{L^6(A_{R})}^2\right)\\
&+CR^{\frac{3p}{6-p}\alpha-1}X^{\frac{3p}{6-p}}_{p,\alpha}(R)\|u\|_{L^6(A_{R})}^{\frac{18-6p}{6-p}}+CR^{\delta_3}X_{p,\alpha}(R)Y^\frac{(4p-6)q}{(6-q)p}_{q,\beta}(R)\|v\|_{L^6(A_{R})}^{2-\frac{(4p-6)q}{(6-q)p}}\\
&+\left(C_\varepsilon R^{\delta_4(\varepsilon)}X^\frac{p+3}{6-p}_{p,\alpha}(R)[Y'_{2,\gamma}(R)]^{\tau_1+\varepsilon}\right)^\frac{2}{\varepsilon}+\left(CR^{\delta_5}X^\frac{p+3}{6-p}_{p,\alpha}(R)Y^{2-\frac{3}{p}}_{q,\beta}(R)\right)^\frac{12-2p}{2p-3}\\
&+\left(C_\varepsilon R^{\alpha+\varepsilon\gamma-1}X_{p,\alpha}(R)[Y'_{2,\gamma}(R)]^{\varepsilon}\right)^\frac{2}{\varepsilon}\\
&+CR^{\delta_6}X^\frac{p}{6-p}_{p,\alpha}(R)Y^\frac{2q}{6-q}_{q,\beta}(R)\|u\|_{L^6(A_R)}^{\frac{6-2p}{6-p}}  \|v\|_{L^6(A_R)}^{\frac{6-3q}{6-q}}\|\nabla v\|_{L^2(A_R)}\notag\\
&+CR^{\frac{2q}{6-q}\beta-1}Y^\frac{2q}{6-q}_{q,\beta}(R)\|v\|_{L^6(A_R)}^{\frac{6-3q}{6-q}}\|\nabla v\|_{L^2(A_R)}\\
&+CR^{\delta_8}X^\frac{p}{6-p}_{p,\alpha}(R)Y^\frac{6+q}{6-q}_{q,\beta}(R)\|u\|_{L^6(A_R)}^{\frac{6-2p}{6-p}} \|v\|_{L^6(A_R)}^{\frac{6-3q}{6-q}}.
\end{align*}
Letting $R=R_j\rightarrow+\infty$ and using \eqref{ine3.45}, we deduce that $u=v=\nabla\theta=0$. Therefore, $\theta$ is a constant.

{\bf Assume that \eqref{c1.4}  holds.} Then we can choose a sequence $R_j\nearrow+\infty$ such that
\begin{equation*}
\lim\limits_{j\rightarrow+\infty}X_{3,\alpha}(R_j)=0,\;\lim\limits_{j\rightarrow+\infty}Y_{q,\beta}(R_j)<+\infty,\;\lim\limits_{j\rightarrow+\infty}Y'_{2,\gamma}(R_j)<+\infty.
\end{equation*}
The rest of the proof follows almost the same line as the first case. The only difference lies in the treatment of $J_3$,  where we consistently employ \eqref{ine3.17} with $p=3$.
\end{proof}

To prove Corollaries \ref{Cor1.1} and \ref{Cor1.2}, we need the following lemma.

\begin{Lem}\label{Lem3.8}
Let $g$ be a smooth function vanishing at infinity, i.e.,
\begin{align*}
\lim_{|x|\rightarrow+\infty}g(x)=0,
\end{align*}
and $\nabla g\in L^r(\mathbb{R}^3)$, where $1\leq r<3$. Then we have $g\in L^\frac{3r}{3-r}(\mathbb{R}^3)$.
\end{Lem}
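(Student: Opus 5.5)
The plan is the classical truncation argument that yields the Sobolev inequality for functions vanishing at infinity, without assuming a priori integrability of $g$. Set $r^*=\frac{3r}{3-r}$. For each $\varepsilon>0$ and $M>\varepsilon$, consider the truncated function
$$g_{\varepsilon,M}=\min\{(|g|-\varepsilon)_+,\,M-\varepsilon\}.$$
Because $g$ vanishes at infinity, the set $\{|g|>\varepsilon\}$ is bounded. Hence $g_{\varepsilon,M}$ is a bounded, compactly supported Lipschitz function. In particular it lies in $W^{1,r}(\mathbb{R}^3)$, with $|\nabla g_{\varepsilon,M}|\leq|\nabla g|$ almost everywhere. (For $r=1$ one can use the same truncation, since Lipschitz compactly supported functions belong to $W^{1,1}$.)

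Next apply the Gagliardo--Nirenberg--Sobolev inequality on $\mathbb{R}^3$, valid for compactly supported $W^{1,r}$ functions with $1\leq r<3$:
$$\|g_{\varepsilon,M}\|_{L^{r^*}(\mathbb{R}^3)}\leq C(r)\|\nabla g_{\varepsilon,M}\|_{L^r(\mathbb{R}^3)}\leq C(r)\|\nabla g\|_{L^r(\mathbb{R}^3)}.$$
The constant is independent of $\varepsilon$ and $M$. If one prefers to stay within the tools of Section~\ref{sec2}, one could instead use Lemma~\ref{Lem2.2} on a large ball containing the support. Since $g_{\varepsilon,M}$ vanishes on a set of positive measure there, its mean value is controlled, but the direct whole-space inequality is cleaner.

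The final step passes to limits. Let $M\to+\infty$ and then $\varepsilon\to0^+$. The functions $g_{\varepsilon,M}$ increase monotonically to $|g|$ pointwise, so the monotone convergence theorem gives
$$\|g\|_{L^{r^*}(\mathbb{R}^3)}\leq C\|\nabla g\|_{L^r(\mathbb{R}^3)}<+\infty.$$

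The only delicate point is justifying that the truncation belongs to $W^{1,r}$ with compact support and gradient bounded by $|\nabla g|$; the vanishing-at-infinity hypothesis is used precisely here. This is standard via the chain rule for Lipschitz truncations (Stampacchia), and I expect no real obstacle.
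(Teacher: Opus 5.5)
Your proof is correct, but it takes a different route from the paper. The paper never truncates $g$. It first shows that the ball averages $(g)_{B_R}$ tend to $0$: by L'Hospital's rule they reduce to the spherical averages $\frac{1}{|\partial B_R|}\int_{\partial B_R}g\,dS$, which vanish because $g\to0$ uniformly on $\partial B_R$. It then applies the scale-invariant Poincar\'e--Sobolev inequality of Lemma~\ref{Lem2.2} to $g-(g)_{B_R}$ on $B_R$. It concludes with Fatou's lemma, using that $[g-(g)_{B_R}]\chi_{B_R}\to g$ pointwise.

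The two arguments use the vanishing hypothesis differently. The paper needs it only to kill the mean values. Its argument therefore works under the weaker assumption $(g)_{B_R}\to0$ and needs no chain rule for Lipschitz truncations. You use it to make the superlevel sets $\{|g|>\varepsilon\}$ bounded. This lets you rely only on the whole-space Gagliardo--Nirenberg--Sobolev inequality for compactly supported functions, and you avoid both the mean-value subtraction and the rescaling of the Poincar\'e--Sobolev constant.

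Both arguments yield the same quantitative bound $\|g\|_{L^{\frac{3r}{3-r}}(\mathbb{R}^3)}\leq C\|\nabla g\|_{L^r(\mathbb{R}^3)}$. Incidentally, your cap at $M$ is superfluous: $|g|$ is continuous, hence bounded on the compact set $\{|g|\geq\varepsilon\}$.
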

\begin{proof}
By L'Hospital's rule and the vanishing condition at infinity, we have
\begin{align}\label{ine3.46}
\lim_{R\rightarrow+\infty}(g)_{B_R}=\lim_{R\rightarrow+\infty}\frac{1}{|\partial B_R|}\int_{\partial B_R}gdS=0.
\end{align}
Let $\chi_{B_R}$ denote the characteristic function of $B_R$. Using \eqref{ine3.46}, Fatou's Lemma and the Poincar\'{e}-Sobolev inequality, we deduce
\begin{align*}
\left\|g\right\|_{L^{\frac{3r}{3-r}}(\mathbb{R}^3)}&=\left\|\liminf_{R\rightarrow+\infty}[g-(g)_{B_R}]\chi_{B_R}\right\|_{L^{\frac{3r}{3-r}}(\mathbb{R}^3)}\\
&\leq\liminf_{R\rightarrow+\infty}\left\|g-(g)_{B_R}\right\|_{L^{\frac{3r}{3-r}}(B_R)}\\
&\leq C\liminf_{R\rightarrow+\infty}\|\nabla g\|_{L^r(B_R)}\\
&=C\|\nabla g\|_{L^r(\mathbb{R}^3)}.
\end{align*}
\end{proof}

Now we are in a position to give the proofs of Corollaries \ref {Cor1.1} and \ref {Cor1.2}.

\begin{proof}[{\bf Proof of Corollary \ref{Cor1.1}}]
Since $v$ vanishes at infinity and $\nabla v\in L^2(\mathbb{R}^3)$, applying Lemma \ref{Lem3.8}, we get $v\in L^6(\mathbb{R}^3)$.
Let $\beta=\frac{3}{q}-\frac{1}{2}$.
By the H\"{o}lder inequality, we have
$$Y_{q,\beta}(R)\leq CR^{\frac{3}{q}-\frac{1}{2}-\beta}\|v\|_{L^6(A_R)}=C\|v\|_{L^6(A_R)}\rightarrow0, \text{ as }R\rightarrow+\infty.$$
Taking $\alpha=0,\beta=\frac{3}{q}-\frac{1}{2}, \gamma=0$ in Theorem \ref{main1}, and considering the above fact, we obtain the desired conclusion.
\end{proof}

\begin{proof}[{\bf Proof of Corollary \ref{Cor1.2}}] For $1<p\leq\frac{3}{2}$, Corollary \ref{Cor1.2} is a direct consequence of Corollary \ref{Cor1.1} and Lemma \ref{Lem3.8}.

When $p=1$, using Lemma \ref{Lem3.8}, we find $u\in L^\frac{3}{2}(\mathbb{R}^3)$. On the other hand, since $u$ is smooth and vanishes at infinity, we have $u\in L^\infty(\mathbb{R}^3)$.
Hence, by the interpolation inequality, we obtain $u\in L^3(\mathbb{R}^3)$. The desired conclusion then follows directly from Corollary \ref{Cor1.1}.

\end{proof}

\section{Proof of Theorem \ref{main2}}\label{sec4}

In this section, let $\eta$ be a cut-off function defined by
\begin{align*}
	\eta(x)= \begin{cases}
		1, & |x| <R, \\
		2-\frac{|x|}{R},& R\leq |x|\leq 2R,\\
		0, & |x| >2R,
	\end{cases}
\end{align*}
and $\Theta:=\theta-(\theta)_{A_R}$. Here we point out that the definitions of $\Theta$ and $\eta$ in the present section are slightly different from those in Section \ref{sec3}.
For any $R>0$, we define the energy function $E(R)$ by
\begin{align}\label{ine4.1}
	\aligned
	E(R)=\int_{\mathbb{R}^3}\left(|\nabla u|^{2}+|\nabla v|^{2}+|\nabla \theta|^{2}\right)\eta(x)d x.
	\endaligned
\end{align}
We will show some properties of $E(R)$ in the next two lemmas. Lemma \ref{Lem4.1} gives a lower bound estimate for the derivative $E'(R)$.
In Lemma \ref{Lem4.2}, we establish an upper bound estimate for $E(R)$.
\begin{Lem}\label{Lem4.1}
	Let $(u,\pi,v,\theta)$ be a smooth solution of \eqref{equ1.1} and $E(R)$ be defined by \eqref{ine4.1}. Then we have
	\begin{align}\label{ine4.2}
		E'(R)\geq\frac{1}{R}\int_{A_R}\left(|\nabla u|^{2}+|\nabla v|^{2}+|\nabla \theta|^{2}\right)dx.
	\end{align}
Consequently, $E(R)$ is a non-decreasing function with respect to $R$.
\end{Lem}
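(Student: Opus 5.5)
The plan is to compute $E'(R)$ directly from the explicit form of the cut-off. Write $g:=|\nabla u|^2+|\nabla v|^2+|\nabla\theta|^2\ge0$. With the piecewise linear $\eta$ of this section, we have
\begin{align*}
E(R)=\int_{B_R}g\,dx+\int_{A_R}g(x)\Big(2-\frac{|x|}{R}\Big)dx=\int_{\mathbb{R}^3}g(x)\,\phi\Big(\frac{|x|}{R}\Big)dx,
\end{align*}
where $\phi(r)=1$ for $r<1$, $\phi(r)=2-r$ for $1\le r\le2$, and $\phi(r)=0$ for $r>2$. The dependence on $R$ therefore sits entirely in the Lipschitz profile $\phi$, evaluated at $|x|/R$.

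First step: differentiate under the integral sign. For fixed $x$, the map $R\mapsto\phi(|x|/R)$ is Lipschitz and non-decreasing. Its derivative is $\frac{|x|}{R^2}$ for $R<|x|<2R$, i.e. $x\in A_R$, and $0$ off the null set $\{|x|=R\}\cup\{|x|=2R\}$.

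To justify the differentiation I would pass to polar coordinates. Set $G(r)=\int_{\partial B_r}g\,dS$, which is continuous in $r$ because $g$ is smooth. Then
\begin{align*}
E(R)=\int_0^R G(r)\,dr+\int_R^{2R}\Big(2-\frac rR\Big)G(r)\,dr .
\end{align*}
By the fundamental theorem of calculus and Leibniz's rule, the boundary contributions are $G(R)-G(R)\cdot1+2G(2R)\cdot0$, and these cancel exactly. What remains is
\begin{align*}
E'(R)=\int_R^{2R}\frac{r}{R^2}G(r)\,dr=\frac{1}{R^2}\int_{A_R}|x|\,g\,dx .
\end{align*}
Bounding this from below is the only real content of the argument.

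Second step: on $A_R$ we have $|x|\ge R$, so $\frac{|x|}{R^2}\ge\frac1R$. This yields
\begin{align*}
E'(R)\ge\frac1R\int_{A_R}\big(|\nabla u|^2+|\nabla v|^2+|\nabla\theta|^2\big)dx ,
\end{align*}
which is \eqref{ine4.2}.

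Monotonicity of $E$ then follows immediately, since the right-hand side of \eqref{ine4.2} is non-negative. Alternatively, it can be read off directly from the fact that $\phi(|x|/R)$ is non-decreasing in $R$ for each fixed $x$.

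The only point needing care is the cancellation of the boundary terms in the Leibniz differentiation, which is what makes $E$ differentiable (indeed $C^1$). No equation structure from \eqref{equ1.1} is used; smoothness of the solution suffices.
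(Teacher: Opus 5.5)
Your proposal is correct and follows the paper's proof essentially verbatim. You split $E(R)$ into the $B_R$ and $A_R$ pieces and differentiate via the Leibniz rule; the boundary terms on $\partial B_R$ cancel and the one on $\partial B_{2R}$ vanishes. This leaves $E'(R)=R^{-2}\int_{A_R}|x|\,(|\nabla u|^2+|\nabla v|^2+|\nabla\theta|^2)\,dx$, and you conclude using $|x|\ge R$ on $A_R$, exactly as the paper does; your polar-coordinate reformulation via $G(r)$ only makes the Leibniz step more explicit.
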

\begin{proof}
	We rewrite $E(R)$ as the following form
	$$
	E(R)=\int_{B_R}\left(|\nabla u|^{2}+|\nabla v|^{2}+|\nabla \theta|^{2}\right) dx+\int_{A_R}\left(|\nabla u|^{2}+|\nabla v|^{2}+|\nabla \theta|^{2}\right)\left(2-\frac{|x|}{R}\right)dx.
	$$
	By a direct calculation, we obtain
	\begin{align*}
		E'(R)=&\int_{\partial B_R}\left(|\nabla u|^{2}+|\nabla v|^{2}+|\nabla \theta|^{2}\right) dS+\int_{A_R}\left(|\nabla u|^{2}+|\nabla v|^{2}+|\nabla \theta|^{2}\right)\frac{|x|}{R^2}dx\\
		&+2\int_{\partial B_{2R}}\left(|\nabla u|^{2}+|\nabla v|^{2}+|\nabla \theta|^{2}\right)\left(2-\frac{2R}{R}\right)dS\\
		&-\int_{\partial B_R}\left(|\nabla u|^{2}+|\nabla v|^{2}+|\nabla \theta|^{2}\right)\left(2-\frac{R}{R}\right)dS\\
		=&\int_{A_R}\left(|\nabla u|^{2}+|\nabla v|^{2}+|\nabla \theta|^{2}\right)\frac{|x|}{R^2}dx\\
		\geq&\frac{1}{R}\int_{A_R}\left(|\nabla u|^{2}+|\nabla v|^{2}+|\nabla \theta|^{2}\right)dx.
	\end{align*}
\end{proof}

\begin{Lem}\label{Lem4.2}
	Let $(u,\pi,v,\theta)$ be a smooth solution of \eqref{equ1.1} and $E(R)$ be defined by \eqref{ine4.1}. Then for any $R>0$, it holds that
	\begin{align}\label{ine4.3}
		E(R)\leq& C\left(\|\nabla u\|_{L^2(A_R)}^2+\|\nabla v\|_{L^2(A_R)}^2+\|\nabla \theta\|_{L^2(A_R)}^2\right)+CR^{\frac{1}{2}-\frac{3}{p}}\|u\|_{L^p(A_R)}\|\nabla u\|_{L^2(A_R)}\notag\\
		&+CR^{\frac{1}{2}-\frac{3}{q}}\|v\|_{L^q(A_R)}\|\nabla v\|_{L^2(A_R)}+CR^{-1}\|u\|_{L^3(A_R)}^3\notag\\
		&+CR^{-1}\|u\|_{L^p(A_R)}\|v\|_{L^{2p'}(A_R)}^2+CR^{-1}\|u\|_{L^p(A_R)}\left\|\Theta\right\|_{L^{2p'}(A_R)}^2\\
        &+CR^{-1}\|v\|_{L^2(A_R)}\left\|\Theta\right\|_{L^2(A_R)}.\notag
	\end{align}
\end{Lem}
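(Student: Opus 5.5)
We would prove Lemma \ref{Lem4.2} by a localized energy identity of the same type as \eqref{ine3.4}, with the piecewise-linear cut-off $\eta$ of this section in place of $\eta^2$, and with the mean values of $u$ and $v$ subtracted so as to produce the terms $R^{\frac12-\frac3p}\|u\|_{L^p}\|\nabla u\|_{L^2}$ and $R^{\frac12-\frac3q}\|v\|_{L^q}\|\nabla v\|_{L^2}$. Since $\eta$ is only Lipschitz, we test with $\eta$ itself and never differentiate it twice. Note that $\nabla\eta=-\frac{x}{|x|R}\chi_{A_R}$, so $|\nabla\eta|\le R^{-1}$ and every boundary-type term lives on $A_R$.

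\textbf{Step 1 (Bogovskii correction).} By Lemma \ref{Lem2.1}, with a constant independent of $R$, we choose $w\in W_0^{1,\sigma}(A_R)$ with $\divg w=u\cdot\nabla\eta$ and $\|\nabla w\|_{L^\sigma(A_R)}\le CR^{-1}\|u\|_{L^\sigma(A_R)}$. The compatibility condition holds because $\int_{A_R}u\cdot\nabla\eta\,dx=\int_{B_{2R}}\divg(u\eta)\,dx=0$.

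\textbf{Step 2 (testing).} We multiply $\eqref{equ1.1}_1$ by $u\eta-w$, $\eqref{equ1.1}_2$ by $v\eta$ and $\eqref{equ1.1}_3$ by $\Theta\eta$, and integrate. The pressure drops out. The key coupling cancellation of the model survives: the terms $\int\divg(v\otimes v)\cdot u\eta$, $\int (v\cdot\nabla)u\cdot v\eta$, $\int\nabla\theta\cdot v\eta$ and $\int\divg v\,\Theta\eta$ cancel up to terms carrying $\nabla\eta$, giving $\int (v\otimes v):(u\otimes\nabla\eta)$ and $\int\Theta v\cdot\nabla\eta$. The viscous terms give $E(R)$ plus the terms $\int \nabla u:(u\otimes\nabla\eta)$, $\int\nabla v:(v\otimes\nabla\eta)$ and $\int \Theta\nabla\Theta\cdot\nabla\eta$. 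The convection terms give $\frac12\int(|u|^2+|v|^2+|\Theta|^2)u\cdot\nabla\eta$. The $w$-terms are $\int\nabla u:\nabla w$ and $\int(u\otimes u+v\otimes v):\nabla w$.

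Because $\nabla\eta$ is supported in $A_R$, we may replace $u$ by $u-(u)_{A_R}$ in the viscous term $\int\nabla u:(u\otimes\nabla\eta)$; the same substitution applies to $v$. The substitution is legitimate since
\[
\int_{A_R}\nabla u:\bigl((u)_{A_R}\otimes\nabla\eta\bigr)\,dx
\]
is the difference of the corresponding integrals over $B_{2R}$ and $B_R$ in a form that vanishes. If this fails for $v$ (it is not divergence free), we instead simply estimate $\|v\|_{L^2(A_R)}$ via \eqref{ine2.3}. That yields $CR\|\nabla v\|_{L^2}+CR^{\frac32-\frac3q}\|v\|_{L^q}$, which after multiplying by $R^{-1}\|\nabla v\|_{L^2}$ gives exactly the two claimed terms. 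The same argument works for $u$, using Hölder on the mean.

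\textbf{Step 3 (estimating the remaining terms).} For $\int\Theta\nabla\Theta\cdot\nabla\eta$ we use the Poincaré inequality (Lemma \ref{Lem2.3}) on $A_R$, which gives the bound $C\|\nabla\theta\|^2_{L^2(A_R)}$. The remaining terms are handled as in Lemma \ref{Lem3.1}. First, $\int\nabla u:\nabla w\le C\|\nabla u\|^2_{L^2(A_R)}+CR^{-2}\|u\|^2_{L^2(A_R)}$, and the last piece is again controlled by \eqref{ine2.3}-type bounds; alternatively, we bound $\|\nabla w\|_{L^2}$ by $CR^{-1}\|u-(u)_{A_R}\|$ after noting that $\divg w=(u-(u)_{A_R})\cdot\nabla\eta+(u)_{A_R}\cdot\nabla\eta$. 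Next, by Hölder with $\sigma=3$, the cubic terms are bounded by $CR^{-1}\|u\|^3_{L^3(A_R)}$. The $v\otimes v$ terms are bounded by $CR^{-1}\|u\|_{L^p}\|v\|^2_{L^{2p'}}$ via $\sigma=p$. The $|\Theta|^2u$ term is bounded by $CR^{-1}\|u\|_{L^p}\|\Theta\|^2_{L^{2p'}}$. Finally, $\int\Theta v\cdot\nabla\eta\le R^{-1}\|v\|_{L^2}\|\Theta\|_{L^2}$.

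\textbf{Main obstacle.} The delicate point is the term $\int\nabla u:\nabla w$ and the viscous cross terms. We must produce $R^{\frac12-\frac3p}\|u\|_{L^p}\|\nabla u\|_{L^2}$ rather than $R^{-2}\|u\|_{L^2}^2$. The route is to split $u=(u-(u)_{A_R})+(u)_{A_R}$ and apply Poincaré to the first piece. For the second piece, $|(u)_{A_R}|\le CR^{-3/p}\|u\|_{L^p}$ combined with $|A_R|^{1/2}\sim R^{3/2}$ and $|\nabla\eta|\le R^{-1}$ gives exactly $R^{\frac12-\frac3p}$.
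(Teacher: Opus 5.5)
Your proposal is correct and takes essentially the same approach as the paper. It uses the same Bogovskii correction $\divg w=u\cdot\nabla\eta$, the same test functions $u\eta-w$, $v\eta$, $\Theta\eta$, and the same H\"older/Poincar\'e bounds. The terms $R^{\frac12-\frac3p}\|u\|_{L^p}\|\nabla u\|_{L^2}$ and $R^{\frac12-\frac3q}\|v\|_{L^q}\|\nabla v\|_{L^2}$ arise exactly as in \eqref{ine4.7}--\eqref{ine4.10}: split $\|u\|_{L^2(A_R)}$ and $\|v\|_{L^2(A_R)}$ into oscillation plus mean, then multiply by $R^{-1}\|\nabla u\|_{L^2(A_R)}$, resp.\ $R^{-1}\|\nabla v\|_{L^2(A_R)}$.

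Two asides should be deleted. First, the claimed vanishing of $\int_{A_R}\nabla u:((u)_{A_R}\otimes\nabla\eta)\,dx$ is false: it equals $-R^{-1}(u)_{A_R}\cdot\int_{A_R}\partial_r u\,dx$, which is nonzero for the solution $u=(x_2^2,0,0)$, $\pi=2x_1$, $v=0$, $\theta$ constant. So rely on your fallback for both $u$ and $v$. Second, bound $\int\nabla u:\nabla w$ by H\"older as $\|\nabla u\|_{L^2}\cdot CR^{-1}\|u\|_{L^2}$, not by Young, which would leave $R^{1-\frac6p}\|u\|_{L^p}^2$, a term not present in \eqref{ine4.3}.
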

\begin{proof}
By Lemma \ref{Lem2.1}, there exists $w\in W_{0}^{1,\sigma}(A_R)$ such that $w$ satisfies the following equation
	\begin{align*}
		\mathrm{div} w=u\cdot\nabla\eta \text{ in }A_R,
	\end{align*}
	with the estimate
	\begin{align}\label{ine4.4}
		\|\nabla w\|_{L^\sigma(A_R)}\leq C\|u\cdot\nabla\eta\|_{L^\sigma(A_R)}\leq CR^{-1}\|u\|_{L^\sigma(A_R)},
	\end{align}
	for any $1<\sigma<+\infty$. We extend $w$ by zero to $B_R$, then $w\in W_{0}^{1,\sigma}(B_{2R}).$
	
Obviously, $(u,\pi,v,\Theta)$ also satisfies \eqref{equ1.1}.	
Multiply both sides of $\eqref{equ1.1}_{1}$, $\eqref{equ1.1}_{2}$ and $\eqref{equ1.1}_{3}$ by $u \eta-w$, $v \eta$ and $\Theta \eta$ respectively, integrate over $B_{2R}$ and apply integration by parts. This procedure yields
	\begin{align}\label{ine4.6}
		E&(R)=\int_{B_{2R}}\left(|\nabla u|^{2}\eta+|\nabla v|^{2}\eta+|\nabla \Theta|^{2}\eta\right) d x\notag\\
		=&-\int_{B_{2R}}\Big[\nabla u:(u\otimes\nabla\eta)+\nabla v:(v\otimes\nabla\eta)+\nabla \Theta \cdot \left(\Theta\nabla \eta\right)\Big]d x+\int_{B_{2R}}\nabla u:\nabla w dx\notag\\
		&+\frac{1}{2} \int_{B_{2R}}|u|^2u \cdot \nabla \eta d x+\frac{1}{2} \int_{B_{2R}}|v|^2u \cdot \nabla \eta d x+\frac{1}{2} \int_{B_{2R}}|\Theta|^2u \cdot \nabla \eta d x\\
		&- \int_{B_{2R}}(u \cdot\nabla )w \cdot u dx+\int_{B_{2R}}(u \cdot v )v \cdot \nabla\eta dx -\int_{B_{2R}}(v \cdot\nabla)w\cdot v dx+\int_{B_{2R}}\Theta v\cdot\nabla \eta dx\notag\\
       =&:\sum_{i=1}^9L_i.\notag
	\end{align}

Similarly to \eqref{ine2.3}, we have
	\begin{align}
		\|u\|_{L^2(A_R)}&\leq CR\|\nabla u\|_{L^2 (A_R)}+CR^{\frac{3}{2}-\frac{3}{p}}\|u\|_{L^p(A_R)},\label{ine4.7}\\
        \|v\|_{L^2(A_R)}&\leq CR\|\nabla v\|_{L^2 (A_R)}+CR^{\frac{3}{2}-\frac{3}{q}}\|v\|_{L^q(A_R)}.\label{ine4.8}
	\end{align}
With the help of the H\"{o}lder inequaity, \eqref{ine4.7}, \eqref{ine4.8} and the Poincar\'{e} inequality, we have
	\begin{align}\label{ine4.9}
		L_1\leq& CR^{-1}\left(\|\nabla u\|_{L^2 (A_R)}\|u\|_{L^2 (A_R)}+\|\nabla v\|_{L^2(A_R)}\|v\|_{L^2(A_R)}+\|\nabla \Theta\|_{L^2 (A_R)}\|\Theta\|_{L^2 (A_R)}\right)\notag\\
        \leq& C\left(\|\nabla u\|_{L^2 (A_R)}^2+\|\nabla v\|_{L^2 (A_R)}^2+\|\nabla \Theta\|_{L^2 (A_R)}^2\right)+CR^{\frac{1}{2}-\frac{3}{p}}\|\nabla u\|_{L^2 (A_R)}\|u\|_{L^p(A_R)}\\
        &+CR^{\frac{1}{2}-\frac{3}{q}}\|\nabla v\|_{L^2 (A_R)}\|v\|_{L^q(A_R)}.\notag
	\end{align}
	Using the H\"{o}lder inequality, \eqref{ine4.4} and \eqref{ine4.7}, we get
	\begin{align}\label{ine4.10}
		L_2&\leq\|\nabla u\|_{L^2(A_R)}\|\nabla w\|_{L^2(A_R)}\notag\\
		&\leq \|\nabla u\|_{L^2(A_R)}\cdot CR^{-1}\|u\|_{L^2(A_R)}\\
		&\leq C\|\nabla u\|_{L^2(A_R)}^2+CR^{\frac{1}{2}-\frac{3}{p}}\|\nabla u\|_{L^2 (A_R)}\|u\|_{L^p(A_R)}.\notag
	\end{align}
	Using the H\"{o}lder inequality and \eqref{ine4.4}, we obtain
	\begin{align}\label{ine4.11}
		L_3+L_6\leq& CR^{-1}\|u\|_{L^3(A_R)}^3+\|u\|_{L^3(A_R)}^2\|\nabla w\|_{L^3(A_R)}\leq CR^{-1}\|u\|_{L^3(A_R)}^3,
	\end{align}
and
	\begin{align}\label{ine4.12}
		L_4+L_7+L_8\leq& \left(CR^{-1}\|u\|_{L^p(A_R)}+\|\nabla w\|_{L^{p}(A_R)}\right)\|v\|_{L^{2p'}(A_R)}^2\notag\\
        \leq& CR^{-1}\|u\|_{L^p(A_R)}\|v\|_{L^{2p'}(A_R)}^2.
	\end{align}
By the H\"{o}lder inequality, we have
	\begin{align}\label{ine4.13}
		L_5+L_9\leq& CR^{-1}\|u\|_{L^p(A_R)}\|\Theta\|_{L^{2p'}(A_R)}^2+CR^{-1}\|v\|_{L^2(A_R)}\|\Theta\|_{L^2(A_R)}.
	\end{align}
Combining \eqref{ine4.6}, \eqref{ine4.9}, \eqref{ine4.10}, \eqref{ine4.11}, \eqref{ine4.12} and \eqref{ine4.13}, we conclude that \eqref{ine4.3} holds.
\end{proof}

For the sake of convenience, we denote the seven terms on the right hand side of \eqref{ine4.3} by $K_1$, $K_2$, $\cdots$, $K_7$, respectively, i.e.
	\begin{align*}
		&K_1=C\left(\|\nabla u\|_{L^2(A_R)}^2+\|\nabla v\|_{L^2(A_R)}^2+\|\nabla \theta\|_{L^2(A_R)}^2\right),\\
        &K_2=CR^{\frac{1}{2}-\frac{3}{p}}\|u\|_{L^p(A_R)}\|\nabla u\|_{L^2(A_R)},\; K_3=CR^{\frac{1}{2}-\frac{3}{q}}\|v\|_{L^q(A_R)}\|\nabla v\|_{L^2(A_R)}\\
		&K_4=CR^{-1}\|u\|_{L^3(A_R)}^3,\quad \quad\quad \quad\quad \quad K_5=CR^{-1}\|u\|_{L^p(A_R)}\|v\|_{L^{2p'}(A_R)}^2,\\
		&K_6=CR^{-1}\|u\|_{L^p(A_R)}\|\Theta\|_{L^{2p'}(A_R)}^2,\quad K_7=CR^{-1}\|v\|_{L^2(A_R)}\|\Theta\|_{L^2(A_R)}.
	\end{align*}

In order to control $K_1$, $K_6$ and $K_7$ effectively, we need to estimate $f(R)$, which is defined by \eqref{ine3.1}.

\begin{Lem}\label{Lem4.3}
Let the assumptions be the same as those in {\rm Theorem \ref{main2}}. Then there exist three positive constants $R_1>3$, $\tau_5$ and $C$ such that
	\begin{align}\label{ine4.14}
		f(2R)\leq C(\ln R)^{\tau_5},\;\forall R\geq R_1.
	\end{align}
\end{Lem}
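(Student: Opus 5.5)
We rerun the argument that gave \eqref{ine3.44} in the proof of Theorem \ref{main1}, now feeding in the logarithmically weakened hypotheses. The energy inequality of Lemma \ref{Lem3.1}, combined with the bounds \eqref{ine3.9}, \eqref{ine3.13}, \eqref{ine3.18} (or \eqref{ine3.17} when $p=3$), \eqref{ine3.21}, \eqref{ine3.23} and \eqref{ine3.27a}, holds with no assumptions on $u,v$. Lemma \ref{Lem2.4} therefore again bounds $f(\sqrt{2}R)$ by the sum of powers of $R$ times powers of $\|u\|_{L^p(A_R)}$, $\|v\|_{L^q(A_R)}$, $\|\nabla v\|_{L^2(A_R)}$ that appears on the right of \eqref{ine3.44}. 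The bound is needed at $2R$ rather than $\sqrt2 R$. For this we replace $R$ by $\sqrt{2}R$ and use that $A_{\sqrt2 R}\subset A_R\cup A_{2R}$, so the norms over $A_{\sqrt2 R}$ are controlled by norms over two dyadic annuli.

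The next step is to insert the hypotheses. Under (B1), for $|x|\sim R$ large we have
\[
\|u\|_{L^p(A_R)}\le CR^{\alpha}(\ln R)^{\lambda}X_{p,\alpha,\lambda}(R),\qquad \|v\|_{L^q(A_R)}\le CR^{\beta}(\ln R)^{\mu}Y_{q,\beta,\mu}(R),\qquad \|\nabla v\|_{L^2(A_R)}\le CR^{\gamma}Y'_{2,\gamma}(R).
\]
Because the assumption is now a $\limsup$, these bounds hold for all $R\ge R_1$ with uniformly bounded $X,Y,Y'$, not merely along a sequence. Under (B2) we use $\|u\|_{L^3(A_R)}\le CR^{\alpha}$ together with the same bounds on $v$ and $\nabla v$. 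Each term of \eqref{ine3.44} then becomes $C R^{\delta_i\cdot(\text{power})}(\ln R)^{(\text{linear combination of }\lambda,\mu)\cdot(\text{power})}$. The exponents $\delta_1,\dots,\delta_8$ and the remaining ones ($1-\frac6p+2\alpha$, $1-\frac6q+2\beta$, $\alpha-\frac2p+\frac13$, $\frac{2q}{6-q}\beta-1$, $\varepsilon\gamma-2$, $\alpha+\varepsilon\gamma-1$) were shown to be $\le0$ in \eqref{ine3.42}--\eqref{ine3.43a} using only Assumptions \ref{a1.1}, \ref{a1.2}. Hence every term is bounded by $C(\ln R)^{\kappa_i}$ for some finite $\kappa_i\ge0$. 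Since $\lambda,\mu,\varepsilon$ and all the outer powers are fixed numbers, $\tau_5:=\max_i\kappa_i$ is finite. When $\delta_i<0$ strictly, the power of $R$ absorbs the logarithm and the term is $O(1)$. When $\delta_i=0$, the term is exactly a power of $\ln R$. Either case yields \eqref{ine4.14}.

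The main obstacle is bookkeeping rather than analysis. We must make sure no exponent of $R$ is positive, since Assumption \ref{a1.4} is not needed at this stage. The potentially delicate terms are those carrying $2/\varepsilon$ or $\tau_3$, $\tau_4$ outer powers. There a vanishing $\delta$ produces large but finite logarithmic powers, which is harmless for Lemma \ref{Lem4.3}. We also fix $\varepsilon$ once and for all as in \eqref{ine3.43}, so that $C_\varepsilon$ and $\tau_5$ are genuine constants. Finally, $R_1>3$ is chosen so that $\ln R\ge1$ and the $\limsup$ bounds in (B1)/(B2) are in force for all $R\ge R_1$.
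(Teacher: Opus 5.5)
Your proposal is correct and follows the paper's proof of Lemma \ref{Lem4.3}. The paper likewise inserts the bounds \eqref{ine4.15}, valid for every $R\ge R_1$, into \eqref{ine3.44}, uses the nonpositivity of all $R$-exponents established in \eqref{ine3.42}--\eqref{ine3.43a}, and then replaces $R$ by $\sqrt{2}R$.

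Your detour through $A_{\sqrt{2}R}\subset A_R\cup A_{2R}$ is harmless but unnecessary. Since the bounds hold for all large $R$, they apply directly at $\sqrt{2}R$, and $\ln(\sqrt{2}R)\le C\ln R$.
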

\begin{proof}
No matter whether $\mathrm{(B1)}$ or $\mathrm{(B2)}$ holds, there always exist two positive constants $R_1>3$ and $C$ such that the following three inequalities hold for any $R\geq R_1$:	
\begin{align}
&\|u\|_{L^p\left(A_R\right)}\leq CR^\alpha(\ln R)^\lambda,\quad\|v\|_{L^q\left(A_R\right)}\leq CR^\beta(\ln R)^\mu,\quad\|\nabla v\|_{L^2\left(A_R\right)}\leq CR^\gamma.\label{ine4.15}
\end{align}
We only consider the case $p\in(\frac{3}{2},3)$ as the remaining case $p=3$ can be treated similarly.	
In the proof of Theorem \ref{main1}, we derive the estimate \eqref{ine3.44} for $f\left(\sqrt{2}R\right)$.
With \eqref{ine4.15} at hand, we observe that each term in the right hand of \eqref{ine3.44} can be controlled by a power of $\ln R$.
Therefore, there exist positive constants  $\tau_5$ and $C$ such that
\begin{align*}
f\left(\sqrt{2}R\right)\leq C(\ln R)^{\tau_5}.
\end{align*}
Replacing $R$ with $\sqrt{2}R$ in the above inequality, we arrive at \eqref{ine4.14}.
\end{proof}

With the help of Lemma \ref{Lem4.3}, we are able to control $K_1$.

\begin{Lem}\label{Lem4.4}
Let the assumptions be the same as those in  {\rm Theorem \ref{main2}}. Let $R_1$ and $\tau_5$ be the constants given in {\rm Lemma \ref{Lem4.3}}. Then there exists a positive constant $C$ such that
	\begin{align}\label{ine4.16}
	K_1\leq&C\left[R\ln R E'(R)\right]^\frac{\tau_5}{1+\tau_5},\;\forall R\geq R_1.
	\end{align}
\end{Lem}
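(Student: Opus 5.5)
Write $G:=\|\nabla u\|_{L^2(A_R)}^2+\|\nabla v\|_{L^2(A_R)}^2+\|\nabla \theta\|_{L^2(A_R)}^2$, so that $K_1=CG$. The proof interpolates between two bounds for $G$: a large but logarithmic bound from Lemma \ref{Lem4.3}, and a bound in terms of $E'(R)$ from Lemma \ref{Lem4.1}.

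\emph{First bound.} Since $A_R\subset B_{2R}$, the definition \eqref{ine3.1} of $f$ gives
$$G\leq f(2R)\leq C(\ln R)^{\tau_5}\qquad\text{for all } R\geq R_1 .$$

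\emph{Second bound.} Lemma \ref{Lem4.1} says exactly that
$$G\leq R\,E'(R).$$

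\emph{Interpolation.} Write $G=G^{\frac{1}{1+\tau_5}}\,G^{\frac{\tau_5}{1+\tau_5}}$ and insert the first bound into the first factor and the second bound into the second factor:
$$G\leq C(\ln R)^{\frac{\tau_5}{1+\tau_5}}\bigl[R\,E'(R)\bigr]^{\frac{\tau_5}{1+\tau_5}}=C\bigl[R\ln R\,E'(R)\bigr]^{\frac{\tau_5}{1+\tau_5}} .$$
Multiplying by the constant in $K_1$ gives \eqref{ine4.16}.

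Because $R_1>3$, we have $\ln R>1$ for $R\geq R_1$. So every power of $\ln R$ is positive and the manipulations are legitimate. $E'(R)\geq0$ by Lemma \ref{Lem4.1}, and $G\geq0$, so the fractional powers are well defined.

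Nothing here is delicate; the real work sits in Lemma \ref{Lem4.3}, which we take as given. The one point to check is that the exponents match: the power of $\ln R$ coming from the first factor must equal the power of $R\,E'(R)$ coming from the second. With the split exponents $\frac{1}{1+\tau_5}$ and $\frac{\tau_5}{1+\tau_5}$, the first gives $(\ln R)^{\frac{\tau_5}{1+\tau_5}}$ and the second gives $[R\,E'(R)]^{\frac{\tau_5}{1+\tau_5}}$, so they match and combine into $[R\ln R\,E'(R)]^{\frac{\tau_5}{1+\tau_5}}$.
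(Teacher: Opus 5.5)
Your proposal is correct and is essentially the paper's own proof. You split $K_1=K_1^{\frac{1}{1+\tau_5}}K_1^{\frac{\tau_5}{1+\tau_5}}$, bound the first factor by $Cf(2R)\leq C(\ln R)^{\tau_5}$ using Lemma \ref{Lem4.3}, and bound the second factor by $CR\,E'(R)$ using Lemma \ref{Lem4.1}, which gives $C\left[R\ln R\,E'(R)\right]^{\frac{\tau_5}{1+\tau_5}}$ for $R\geq R_1$.
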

\begin{proof}
	Using \eqref{ine4.14} and \eqref{ine4.2}, we have
	\begin{align*}
		K_1=&K_1^\frac{1}{1+\tau_5}\cdot K_1^\frac{\tau_5}{1+\tau_5} \\
        \leq&\left[Cf(2R)\right]^\frac{1}{1+\tau_5}\cdot\left[CR E'(R)\right]^\frac{\tau_5}{1+\tau_5}\\
		\leq&C\left[R\ln R E'(R)\right]^\frac{\tau_5}{1+\tau_5},
	\end{align*}
	where we require $R\geq R_1$.
	\end{proof}

The estimates of the remaining six terms $K_2$, $K_3$, $\cdots$, $K_7$ will be given in the subsequent five lemmas.

\begin{Lem}\label{Lem4.5}
Let the assumptions be the same as those in  {\rm Theorem \ref{main2}}.  Let $R_1$  be the constant given in {\rm Lemma \ref{Lem4.3}}. Then there exists a positive constant $C$ such that
\begin{equation}\label{ine4.17}
K_2+K_3\leq C[R\ln R E'(R)]^\frac{1}{2},\;\forall R\geq R_1.
\end{equation}
\end{Lem}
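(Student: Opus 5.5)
The plan is to estimate $K_2$ and $K_3$ directly, using three ingredients: the growth bounds \eqref{ine4.15} on $\|u\|_{L^p(A_R)}$ and $\|v\|_{L^q(A_R)}$, the lower bound on $E'(R)$ from Lemma \ref{Lem4.1}, and a comparison of powers of $R$ and $\ln R$. No interpolation or Young inequality should be needed. By \eqref{ine4.2},
$$\|\nabla u\|_{L^2(A_R)}+\|\nabla v\|_{L^2(A_R)}\leq 2\left[RE'(R)\right]^{\frac12}.$$
For $R\geq R_1>3$ we have $\ln R>1$, so this is in turn at most $C\left[R\ln R\,E'(R)\right]^{\frac12}(\ln R)^{-\frac12}$. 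The growth bounds on $u$ and $v$ only have to absorb what is left over.

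\textbf{Estimate for $K_2$.} Inserting $\|u\|_{L^p(A_R)}\leq CR^{\alpha}(\ln R)^{\lambda}$ from \eqref{ine4.15} gives
$$K_2\leq C\,R^{\alpha-\frac{3}{p}}(\ln R)^{\lambda-\frac12}\left[R\ln R\,E'(R)\right]^{\frac12}.$$
Under (B2) we have $p=3$ and $X_{3,\alpha}(R)\to0$, so the bound holds with $\lambda=0$. By Assumption \ref{a1.1}, $\alpha\leq\frac{2}{p}-\frac13$, hence $\alpha-\frac3p\leq-\frac1p-\frac13<0$. A strictly negative power of $R$ dominates any power of $\ln R$, so the prefactor is bounded for $R\geq R_1$ (after enlarging the constant).

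\textbf{Estimate for $K_3$.} Similarly, \eqref{ine4.15} gives
$$K_3\leq C\,R^{\beta-\frac{3}{q}+\frac12}(\ln R)^{\mu-\frac12}\left[R\ln R\,E'(R)\right]^{\frac12}.$$
Assumption \ref{a1.1} gives $\beta\leq\frac3q-\frac12$, and we split into two cases.
\begin{itemize}
\item[(a)] If $\beta<\frac3q-\frac12$, the power of $R$ is strictly negative and the prefactor is bounded, exactly as for $K_2$.
\item[(b)] If $\beta=\frac3q-\frac12$, the power of $R$ vanishes and only $(\ln R)^{\mu-\frac12}$ remains. This is the one point where a genuine assumption enters. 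Assumption \ref{a1.4}(iii) gives $2\mu\leq\frac{3-p}{6-p}-\frac{p}{6-p}\lambda\leq\frac{3-p}{6-p}<\frac12$ for $p\in(\frac32,3)$. Hence $\mu<\frac14<\frac12$, and $(\ln R)^{\mu-\frac12}\leq 1$ for $R>3$. Under (B2) ($p=3$), Assumption \ref{a1.4}(iii) rules out $\beta=\frac3q-\frac12$, so only case (a) occurs there.
\end{itemize}

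Adding the two estimates gives \eqref{ine4.17} for all $R\geq R_1$. The only delicate step is the endpoint case (b) for $K_3$. In every other case there is polynomial room to spare, and the $\ln R$ factor in the target bound is not even needed.
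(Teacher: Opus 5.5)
Your proof is correct and is essentially the paper's argument. You bound $\|u\|_{L^p(A_R)}$ and $\|v\|_{L^q(A_R)}$ via \eqref{ine4.15}, bound $\|\nabla u\|_{L^2(A_R)}$ and $\|\nabla v\|_{L^2(A_R)}$ by $[RE'(R)]^{\frac12}$ via \eqref{ine4.2}, and use Assumption \ref{a1.4}(iii) in the endpoint case $\beta=\frac3q-\frac12$ to keep $\mu$ below $\frac12$ (the paper gets $\mu<\frac16$). One slip: in your $K_2$ display the power of $R$ should be $\alpha-\frac3p+\frac12$, not $\alpha-\frac3p$; this is still at most $\frac16-\frac1p\le-\frac16<0$, so the conclusion is unaffected.
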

\begin{proof}
According to Assumptions \ref{a1.1} and \ref{a1.4}(iii), we have
$$
\alpha-\left(\frac{3}{p}-\frac{1}{2}\right)<0,\;\beta-\left(\frac{3}{q}-\frac{1}{2}\right)\leq0,
$$
and
$$
\mu\leq\frac{1}{2}\cdot\frac{3-p}{6-p}<\frac{1}{6}\;\text{ if }\;\beta=\frac{3}{q}-\frac{1}{2}.
$$
Consequently, there exists a constant $C$ such that for any $R>3$, it holds that
$$
R^{\alpha-\left(\frac{3}{p}-\frac{1}{2}\right)}(\ln R)^\lambda\leq C\;\text{ and }\;R^{\beta-\left(\frac{3}{q}-\frac{1}{2}\right)}(\ln R)^\mu\leq C(\ln R)^\frac{1}{6}.
$$
Then using \eqref{ine4.15} and \eqref{ine4.2}, for any $R\geq R_1$, we have
\begin{align*}
K_2+K_3&\leq  CR^{\alpha-\left(\frac{3}{p}-\frac{1}{2}\right)}(\ln R)^\lambda \|\nabla u\|_{L^2(A_R)}+ CR^{\beta-\left(\frac{3}{q}-\frac{1}{2}\right)}(\ln R)^\mu\|\nabla v\|_{L^2(A_R)}\\
&\leq C\|\nabla u\|_{L^2(A_R)}+C(\ln R)^\frac{1}{6}\|\nabla v\|_{L^2(A_R)}\\
&\leq C[R E'(R)]^\frac{1}{2}+C[R\ln R E'(R)]^\frac{1}{2}\\
&\leq C[R\ln R E'(R)]^\frac{1}{2}.
\end{align*}
\end{proof}

\begin{Lem}\label{Lem4.6}
Let the assumptions be the same as those in {\rm Theorem \ref{main2}}. Assume $E(R)\not\equiv0$. Then there exist two positive constants $R_5>3$ and $C$ such that
\begin{align}\label{ine4.18}
K_4\leq \frac{1}{16}E(R)+C\chi(p)\left[R\ln R E'(R)\right]^{\frac{9-3p}{6-p}},\;\forall R\geq R_5,
\end{align}
where $\chi(p)$ is defined by
\begin{equation*}
\chi(p)=
\begin{cases}
1, & \text{ if  }\frac{3}{2}<p<3, \\
0, & \text{ if  } p=3.
\end{cases}
\end{equation*}
\end{Lem}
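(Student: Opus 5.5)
The plan is to bound $\|u\|_{L^3(A_R)}$ by interpolation between $L^p$ and $L^6$, and to control the $L^6$ norm through $\nabla u$, so that Lemma \ref{Lem4.1} converts it into $RE'(R)$. Throughout I use the growth bound $\|u\|_{L^p(A_R)}\le CR^\alpha(\ln R)^\lambda$ from \eqref{ine4.15}, valid for $R\ge R_1$. I also use the assumption $E\not\equiv0$ in the following form. Since $E$ is non-decreasing by Lemma \ref{Lem4.1} and not identically zero, there is $R_0$ with $E(R)\ge E(R_0)>0$ for all $R\ge R_0$. Hence any term that tends to $0$ as $R\to+\infty$ is bounded by $\frac{1}{16}E(R)$ for $R$ large. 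This determines the choice of $R_5$.

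\textbf{Case $p=3$.} Under (B2) we have $X_{3,\alpha}(R)\to0$, so $\|u\|_{L^3(A_R)}=o(R^\alpha)$. Therefore
$$K_4=CR^{-1}\|u\|_{L^3(A_R)}^3=o(R^{3\alpha-1}).$$
Since $\alpha\le \frac23-\frac13=\frac13$, we have $3\alpha-1\le0$, so $K_4\to0$. By the observation above, $K_4\le\frac1{16}E(R)$ for $R\ge R_5$, which is \eqref{ine4.18} with $\chi(3)=0$.

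\textbf{Case $\frac32<p<3$.} First interpolate:
$$\|u\|_{L^3(A_R)}^3\le \|u\|_{L^p(A_R)}^{\frac{3p}{6-p}}\,\|u\|_{L^6(A_R)}^{\frac{18-6p}{6-p}}.$$
Next split $u=(u-(u)_{A_R})+(u)_{A_R}$. Applying Lemma \ref{Lem2.2} on $A_R$ (its constant is scale invariant) to the first part, and Hölder to the mean, gives
$$\|u\|_{L^6(A_R)}^2\le C\|\nabla u\|_{L^2(A_R)}^2+CR^{1-\frac6p}\|u\|_{L^p(A_R)}^2\le CRE'(R)+CR^{1-\frac6p+2\alpha}(\ln R)^{2\lambda},$$
where the last step uses \eqref{ine4.2}. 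Raise this to the power $\frac{9-3p}{6-p}$ and insert it in the interpolation bound.

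The first resulting piece is
$$CR^{\frac{3p}{6-p}\alpha-1}(\ln R)^{\frac{3p\lambda}{6-p}}[RE'(R)]^{\frac{9-3p}{6-p}}.$$
Since $\alpha\le\frac2p-\frac13$, the power of $R$ satisfies $\frac{3p}{6-p}\alpha-1\le0$. Since $\lambda\le\frac3p-1$ (Assumption \ref{a1.3}), we have $\frac{3p\lambda}{6-p}\le\frac{9-3p}{6-p}$. So this piece is at most $C[R\ln R\,E'(R)]^{\frac{9-3p}{6-p}}$ for $R\ge R_1>3$, where $\ln R>1$.

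The second piece is
$$CR^{3\alpha-1+\frac{9-3p}{6-p}(1-\frac6p)}(\ln R)^{3\lambda}.$$
At the endpoint $\alpha=\frac2p-\frac13$ the exponent equals $-\frac{(3-p)(6-p)}{p(6-p)}=-\frac{3-p}{p}<0$. Hence this piece tends to $0$, and it is absorbed into $\frac1{16}E(R)$ for large $R$ by the opening observation.

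The only delicate points are the exponent bookkeeping: the power of $R$ must be nonpositive, and the log power must match $\frac{9-3p}{6-p}$. The latter is exactly where the restriction $\lambda\le\frac3p-1$ in Assumption \ref{a1.3} enters. Taking $R_5\ge R_1$ large enough completes the proof of \eqref{ine4.18}.
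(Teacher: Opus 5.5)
Your proposal is correct and follows essentially the same route as the paper. In both, $\|u\|_{L^3}$ is interpolated between $L^p$ and $L^6$, $\|u\|_{L^6(A_R)}$ is bounded via the Poincar\'e--Sobolev estimate \eqref{ine4.19}, and $\|\nabla u\|_{L^2(A_R)}^2\le RE'(R)$ comes from \eqref{ine4.2}. The decaying remainder $CR^{-\frac{3-p}{p}}(\ln R)^{3\lambda}$ is then absorbed into $\frac{1}{16}E(R)$ using monotonicity and $E\not\equiv0$, and for $p=3$ one uses $K_4=CR^{3\alpha-1}X_{3,\alpha}^3(R)\to0$ under (B2). Your exponent bookkeeping matches the paper's, including where $\alpha\le\frac2p-\frac13$ and $\lambda\le\frac3p-1$ enter. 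One small wording point: the $R$-exponent of the second piece is increasing in $\alpha$, so it is at most its endpoint value $-\frac{3-p}{p}$, not merely equal to it there.
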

\begin{proof}
Using the Minkowski inequality, the Poincar\'{e}-Sobolev inequality,  and the H\"{o}lder inequality, we derive
	\begin{align}\label{ine4.19}
		\|u\|_{L^6(A_R)}&\leq \|u-(u)_{A_R}\|_{L^6(A_R)}+\|(u)_{A_R}\|_{L^6(A_R)}\notag\\
		&\leq C\|\nabla u\|_{L^2 (A_R)}+CR^\frac{1}{2}\left|(u)_{A_R}\right|\\
		&\leq C\|\nabla u\|_{L^2 (A_R)}+CR^{\frac{1}{2}-\frac{3}{p}}\|u\|_{L^p(A_R)}.\notag
	\end{align}

When $p\in\left(\frac{3}{2},3\right)$, using the interpolation inequality, \eqref{ine4.19}, \eqref{ine4.15}, \eqref{ine4.2} and the assumptions $\alpha\leq\frac{2}{p}-\frac{1}{3}$, $\lambda\leq\frac{3}{p}-1$, we obtain
	\begin{align}\label{ine4.20}
		K_4&\leq CR^{-1}\|u\|_{L^p(A_R)}^\frac{3p}{6-p}\|u\|_{L^6(A_R)}^\frac{18-6p}{6-p}\notag\\
		&\leq CR^{-1}\|u\|_{L^p(A_R)}^{\frac{3p}{6-p}}\left[\|\nabla u\|_{L^2(A_R)}^\frac{18-6p}{6-p}+\left(R^{\frac{1}{2}-\frac{3}{p}}\|u\|_{L^p(A_R)}\right)^\frac{18-6p}{6-p}\right]\\
        &\leq C[R\ln RE'(R)]^\frac{9-3p}{6-p}+CR^{-\frac{3-p}{p}}(\ln R)^\frac{9-3p}{p},\notag
	\end{align}
for any $R\geq R_1$.
Since $E(R)\not\equiv0$, in view of the non-decreasing property of $E(R)$, there exists a constant $R_2>R_1$ such that
\begin{align}\label{ine4.21}
\text{$E(R)\geq E(R_2)>0$ for any $R\geq R_2$.}
\end{align}
Due to the fact
$$
\lim\limits_{R\rightarrow+\infty}CR^{-\frac{3-p}{p}}(\ln R)^\frac{9-3p}{p}=0,
$$
there exists a constant $R_3>R_2$ such that
\begin{align}\label{ine4.22}
CR^{-\frac{3-p}{p}}(\ln R)^\frac{9-3p}{p}\leq\frac{1}{16}E(R_2)\leq \frac{1}{16}E(R),\;\forall R\geq R_3.
\end{align}
By a combination of \eqref{ine4.20} and \eqref{ine4.22}, we get
\begin{align}\label{ine4.23}
K_4\leq \frac{1}{16}E(R)+C\left[R\ln R E'(R)\right]^{\frac{9-3p}{6-p}},\;\forall R\geq R_3.
\end{align}

When $p=3$, we have
\begin{align*}
\lim\limits_{R\rightarrow+\infty}K_4=\lim\limits_{R\rightarrow+\infty}CR^{3\alpha-1}X_{3,\alpha}^3(R)=0.
\end{align*}
Therefore, there exists a constant $R_4>R_2$ such that
\begin{align}\label{ine4.24}
K_4\leq\frac{1}{16}E(R_2)\leq \frac{1}{16}E(R),\;\forall R\geq R_4.
\end{align}

Denote $R_5=\chi(p)R_3+[1-\chi(p)]R_4$. Taking \eqref{ine4.23} and \eqref{ine4.24} into consideration together, we find that \eqref{ine4.18} holds.
\end{proof}

\begin{Lem}\label{Lem4.7}
Let the assumptions be the same as those in  {\rm Theorem \ref{main2}}. Assume $E(R)\not\equiv0$. Then there exist two positive constants $R_6>3$ and $C$ such that
\begin{align}\label{ine4.25}
K_5\leq \frac{1}{16}E(R)+C \left[ R \ln R E'(R) \right]^{1-\frac{(2p-3)q}{(6-q)p}},\;\forall R\geq R_6.
\end{align}
\end{Lem}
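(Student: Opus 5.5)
We bound $K_5=CR^{-1}\|u\|_{L^p(A_R)}\|v\|_{L^{2p'}(A_R)}^2$ in the same way as $K_4$ in Lemma \ref{Lem4.6}, with $v$ in place of $u$. Since $1\le q\le 2<2p'<6$, the interpolation step of Lemma \ref{Lem3.5} gives
\begin{align*}
K_5\leq CR^{-1}\|u\|_{L^p(A_R)}\|v\|_{L^q(A_R)}^{\frac{(4p-6)q}{(6-q)p}}\|v\|_{L^6(A_R)}^{2-\frac{(4p-6)q}{(6-q)p}}.
\end{align*}
We then bound $\|v\|_{L^6(A_R)}$ as in \eqref{ine4.19}:
\begin{align*}
\|v\|_{L^6(A_R)}\leq C\|\nabla v\|_{L^2(A_R)}+CR^{\frac12-\frac3q}\|v\|_{L^q(A_R)}.
\end{align*}
Set $a:=1-\frac{(2p-3)q}{(6-q)p}$, so that $2-\frac{(4p-6)q}{(6-q)p}=2a$. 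This splits $K_5$ into a gradient part and a lower-order part.

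\emph{Gradient part.} It is bounded by $CR^{-1}\|u\|_{L^p}\|v\|_{L^q}^{2-2a}\|\nabla v\|_{L^2(A_R)}^{2a}$. By \eqref{ine4.2}, $\|\nabla v\|_{L^2(A_R)}^{2a}\le [RE'(R)]^a$. By \eqref{ine4.15}, the prefactor is at most
\begin{align*}
CR^{-1+\alpha+(2-2a)\beta}(\ln R)^{\lambda+(2-2a)\mu}.
\end{align*}
Here $2-2a=\frac{(4p-6)q}{(6-q)p}$, so $-1+\alpha+(2-2a)\beta=\delta_3\le 0$ by \eqref{ine3.42}. If $\delta_3<0$, the power of $R$ absorbs every logarithm. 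If $\delta_3=0$, we must have $\lambda+(2-2a)\mu\le a$, so that the prefactor is at most $C(\ln R)^a$ and the whole term is at most $C[R\ln R\,E'(R)]^a$.

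\emph{Lower-order part.} It is bounded by
\begin{align*}
CR^{-1+(\frac12-\frac3q)2a}\|u\|_{L^p}\|v\|_{L^q}^{2}\leq CR^{\delta'}(\ln R)^{\lambda+2\mu},\qquad \delta'=-1+\alpha+2\beta+2a\Big(\tfrac12-\tfrac3q\Big).
\end{align*}
Using $\beta\le \frac3q-\frac12$ gives $\delta'\le \delta_3$. Whenever $\delta'<0$, this part tends to zero. Exactly as in \eqref{ine4.21}--\eqref{ine4.22}, using $E(R)\not\equiv0$ and the monotonicity of $E$, it is then at most $\frac1{16}E(R_2)\le\frac1{16}E(R)$ for $R\ge R_6$ large.

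\emph{Main obstacle.} The hard step is the borderline case where the exponent of $R$ vanishes, so that a logarithm survives. We need to show that Assumption \ref{a1.4} forces the logarithmic exponent to be at most $a$ (gradient part) and the lower-order part to be $o(1)$.

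When $\delta_3=0$, the chain in \eqref{ine3.42} forces $\alpha=\frac2p-\frac13$ and $\frac{p}{6-p}\alpha+\frac{2q}{6-q}\beta=1$. I would then check case by case that Assumptions \ref{a1.3} and \ref{a1.4}(i)/(ii) give $\lambda+(2-2a)\mu\le a$.

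For the lower-order part, $\delta'=0$ requires $\beta=\frac3q-\frac12$. Assumption \ref{a1.4}(iii) then excludes $p=3$ and gives $\frac{p}{6-p}\lambda+2\mu\le\frac{3-p}{6-p}$. Combined with $\alpha<\frac3p-\frac12$, this should make the exponent of $R$ strictly negative, or at least keep the term bounded by $o(1)+C[R\ln R\,E'(R)]^a$.

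If this bookkeeping fails in some subcase, the fallback is to use Young's inequality to split off a small multiple of $f(2R)$ and control it with Lemma \ref{Lem4.3}, in the spirit of Lemma \ref{Lem4.4}.
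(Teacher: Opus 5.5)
Your decomposition is the paper's: interpolate $\|v\|_{L^{2p'}(A_R)}$ between $L^q$ and $L^6$, bound $\|v\|_{L^6(A_R)}$ by $C\|\nabla v\|_{L^2(A_R)}+CR^{\frac12-\frac3q}\|v\|_{L^q(A_R)}$, and treat the two pieces with exponents $\delta_3$ and $\delta'=\delta_3+2a\big[\beta-(\frac3q-\frac12)\big]$.

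The gradient part is fine once you carry out the case check, and the check does work:
\begin{itemize}
\item in case (i) of Assumption \ref{a1.4}, $\lambda+(2-2a)\mu\le\frac{9-3p}{6-p}\lambda+\frac{2p-3}{p}\big(\frac{3-p}{6-p}+\frac{6-2q}{6-q}\big)\le a$, using $\lambda\le\frac3p-1$;
\item case (ii) is similar, using $q\le\frac{18-6p}{9-2p}$;
\item case (iii) of Assumption \ref{a1.2} forces $\delta_3<0$.
\end{itemize}
Run the check under the hypothesis $\frac{p}{6-p}\alpha+\frac{2q}{6-q}\beta=1$, which is what $\delta_3=0$ actually gives. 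At $p=3$ the coefficient $\frac{9-3p}{6-p}$ in \eqref{ine3.42} vanishes, so $\delta_3=0$ does not force $\alpha=\frac2p-\frac13$.

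\textbf{The gap is in the lower-order part.} That term contains no $E'(R)$, so the contradiction argument needs it to tend to zero. In other words, you need $\delta'<0$ strictly. A surviving factor $(\ln R)^{\lambda+2\mu}$, or even a nonzero constant, can neither be absorbed into $\frac1{16}E(R_2)$ nor be written as a power of $R\ln R\,E'(R)$. You leave this at ``should'', and the ingredients you name do not deliver it:
\begin{itemize}
\item if $\beta=\frac3q-\frac12$, then $\delta'=\delta_3=\alpha-\frac{3-p}{p}$, and neither $\alpha<\frac3p-\frac12$ nor even $\alpha\le\frac2p-\frac13$ gives $\alpha<\frac3p-1$ when $p>\frac32$;
\item the logarithmic bound in Assumption \ref{a1.4}(iii) says nothing about the power of $R$;
\item the fallback via Young's inequality and Lemma \ref{Lem4.3} only produces further powers of $\ln R$.
\end{itemize}

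\textbf{The missing step is short.}
\begin{itemize}
\item If $\beta=\frac3q-\frac12$, then $\frac{2q}{6-q}\beta=1$. Assumption \ref{a1.2} together with $\alpha\ge0$ then forces $\alpha=0$ and excludes its case (iii). Hence $\delta'=\delta_3=\frac{p-3}{p}$, which is negative exactly because Assumption \ref{a1.4}(iii) gives $p\neq3$.
\item If $\beta<\frac3q-\frac12$, then $\delta'\le 2a\big[\beta-(\frac3q-\frac12)\big]<0$, since $a\ge\frac12$.
\end{itemize}
So $\delta'<0$ always, and the lower-order part is eventually below $\frac1{16}E(R_2)\le\frac1{16}E(R)$, as you intended.

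The paper phrases this as ``$\delta_3=0$ forces $\beta=\frac2q-\frac13<\frac3q-\frac12$''. That is valid for $p<3$. At $p=3$, however, the pair $(\alpha,\beta)=(0,\frac3q-\frac12)$ satisfies Assumptions \ref{a1.1}--\ref{a1.2} with $\delta_3=\delta'=0$, and only Assumption \ref{a1.4}(iii) removes it. So your instinct to invoke that assumption was right; it just has to be paired with Assumption \ref{a1.2}, not with $\alpha<\frac3p-\frac12$.
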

\begin{proof}
From \eqref{ine3.42}, we see that $\delta_3\leq0$ and $\delta_3$ attains zero if and only if
\begin{align*}
\begin{cases}
\alpha=\frac{2}{p}-\frac{1}{3}\\
\beta=\frac{6-q}{2q}\left(1-\frac{p}{6-p}\alpha\right)=\frac{2}{q}-\frac{1}{3}.
\end{cases}
\end{align*}
That is to say,
\begin{align}\label{ine4.26}
\delta_3<0,\;\text{ if }\;(\alpha,\beta)\neq\left(\frac{2}{p}-\frac{1}{3},\frac{2}{q}-\frac{1}{3}\right);\;\delta_3=0,\;\text{ if }\;(\alpha,\beta)=\left(\frac{2}{p}-\frac{1}{3},\frac{2}{q}-\frac{1}{3}\right).
\end{align}

On the other hand, we claim that
\begin{align}\label{ine4.27}
\lambda+\frac{(4p-6)q}{(6-q)p}\mu\leq1-\frac{(2p-3)q}{(6-q)p},\;\text{ if }\;(\alpha,\beta)=\left(\frac{2}{p}-\frac{1}{3},\frac{2}{q}-\frac{1}{3}\right).
\end{align}
We prove this via case-by-case analysis.

\noindent Case 1: $\frac{6-2p}{6-p} + \frac{6-3q}{6-q} < 1$. Using Assumptions \ref{a1.4}(i) and \ref{a1.3}, we derive that
\begin{align*}
\lambda+\frac{(4p-6)q}{(6-q)p}\mu=&\lambda+\frac{2p-3}{p}\cdot\frac{2q}{6-q}\mu\\
\leq&\lambda+\frac{2p-3}{p}\cdot\left(\frac{3-p}{6-p}+\frac{6-3q}{12-2q}+\frac{1}{2}-\frac{p}{6-p}\lambda\right)\\
=&\frac{9-3p}{6-p}\lambda+\frac{2p-3}{p}\cdot\left(\frac{3-p}{6-p}+\frac{6-3q}{12-2q}+\frac{1}{2}\right)\\
\leq&\frac{9-3p}{6-p}\left(\frac{3}{p}-1\right)+\frac{2p-3}{p}\cdot\left(\frac{3-p}{6-p}+\frac{6-3q}{12-2q}+\frac{1}{2}\right)\\
=&1-\frac{(2p-3)q}{(6-q)p}.
\end{align*}

\noindent Case 2: $\frac{6-2p}{6-p} + \frac{6-3q}{6-q}\geq1$ and $\gamma=0$.  Since $\frac{6-2p}{6-p} + \frac{6-3q}{6-q}\geq1$, we conclude that
$$
p\neq3,\;\;q\leq\frac{18-6p}{9-2p} \;\text{ and }\;6-q\geq\frac{36-6p}{9-2p}.
$$
Using Assumptions \ref{a1.4}(ii), \ref{a1.3} and the above fact, we obtain
\begin{align*}
\lambda+\frac{(4p-6)q}{(6-q)p}\mu=&\lambda+\frac{2p-3}{p}\cdot\frac{2q}{6-q}\mu\\
<&\lambda+\frac{2p-3}{p}\cdot\left(1-\frac{p}{6-p}\lambda\right)\\
=&\frac{9-3p}{6-p}\lambda+\frac{2p-3}{p}\\
\leq&\frac{9-3p}{6-p}\left(\frac{3}{p}-1\right)+\frac{2p-3}{p}\cdot\left(\frac{6}{6-q}-\frac{q}{6-q}\right)\\
\leq&\frac{9-3p}{6-p}\left(\frac{3}{p}-1\right)+\frac{2p-3}{p}\cdot\frac{6(9-2p)}{36-6p}-\frac{(2p-3)q}{(6-q)p}\\
=&1-\frac{(2p-3)q}{(6-q)p}.
\end{align*}

\noindent Case 3: $\frac{6-2p}{6-p} + \frac{6-3q}{6-q}\geq1$ and $\gamma>0$. In view of Assumption \ref{a1.2}(iii), this case cannot happen.
Hence, we complete the proof of \eqref{ine4.27}.

Using \eqref{ine4.26} and \eqref{ine4.27}, we deduce that
\begin{align}\label{ine4.28}
R^{\delta_3}(\ln R)^{\lambda+\frac{(4p-6)q}{(6-q)p}\mu}\leq C(\ln R)^{1-\frac{(2p-3)q}{(6-q)p}},\;\forall R>3.
\end{align}
Since $\delta_3$ and $\left[\beta-\left(\frac{3}{q}-\frac{1}{2}\right)\right]\left(2-\frac{(4p-6)q}{(6-q)p}\right)$ are nonpositive and can not attain zero at the same time,
we obtain
\begin{align}\label{ine4.29}
\delta_3+\left[\beta-\left(\frac{3}{q}-\frac{1}{2}\right)\right]\left(2-\frac{(4p-6)q}{(6-q)p}\right)<0.
\end{align}

Similarly to \eqref{ine4.19}, we have
\begin{align}\label{ine4.30}
\|v\|_{L^6(A_R)}&\leq C\|\nabla v\|_{L^2 (A_R)}+CR^{\frac{1}{2}-\frac{3}{q}}\|v\|_{L^q(A_R)}.
\end{align}
By the interpolation inequality, \eqref{ine4.30}, \eqref{ine4.15}, \eqref{ine4.2} and \eqref{ine4.28}  we obtain
\begin{align}\label{ine4.31}
	K_5 &\leq C R^{-1} \|u\|_{L^p(A_R)} \|v\|_{L^q(A_R)}^\frac{(4p-6)q}{(6-q)p}\|v\|_{L^6(A_R)}^{2-\frac{(4p-6)q}{(6-q)p}}\notag\\
        &\leq C R^{-1} \|u\|_{L^p(A_R)} \|v\|_{L^q(A_R)}^\frac{(4p-6)q}{(6-q)p}\left[\|\nabla v\|_{L^2 (A_R)}^{2-\frac{(4p-6)q}{(6-q)p}}+\left(R^{\frac{1}{2}-\frac{3}{q}}\|v\|_{L^q(A_R)}\right)^{2-\frac{(4p-6)q}{(6-q)p}}\right]\\
        &\leq CR^{\delta_3}(\ln R)^{\lambda+\frac{(4p-6)q}{(6-q)p}\mu}\left[R E'(R) \right]^{1-\frac{(2p-3)q}{(6-q)p}}+CR^{\delta_3+\left[\beta-\left(\frac{3}{q}-\frac{1}{2}\right)\right]\left(2-\frac{(4p-6)q}{(6-q)p}\right)}(\ln R)^{\lambda+2\mu}\notag\\
        &\leq C \left[ R \ln R E'(R) \right]^{1-\frac{(2p-3)q}{(6-q)p}}+CR^{\delta_3+\left[\beta-\left(\frac{3}{q}-\frac{1}{2}\right)\right]\left(2-\frac{(4p-6)q}{(6-q)p}\right)}(\ln R)^{\lambda+2\mu}.\notag
\end{align}

Since $E(R)\not\equiv0$, \eqref{ine4.21} holds. Thanks to \eqref{ine4.29}, we get
$$
\lim\limits_{R\rightarrow+\infty}CR^{\delta_3+\left[\beta-\left(\frac{3}{q}-\frac{1}{2}\right)\right]\left(2-\frac{(4p-6)q}{(6-q)p}\right)}(\ln R)^{\lambda+2\mu}=0.
$$
Consequently, there exists a constant $R_6>R_2$ such that
\begin{align}\label{ine4.32}
CR^{\delta_3+\left[\beta-\left(\frac{3}{q}-\frac{1}{2}\right)\right]\left(2-\frac{(4p-6)q}{(6-q)p}\right)}(\ln R)^{\lambda+2\mu}\leq\frac{1}{16}E(R_2)\leq \frac{1}{16}E(R),\;\forall R\geq R_6.
\end{align}
Combining \eqref{ine4.31} and \eqref{ine4.32}, we obtain \eqref{ine4.25}.
\end{proof}

\begin{Lem}\label{Lem4.8}
	Let the assumptions be the same as those in  {\rm Theorem \ref{main2}}. Then there exists a positive constant $C$ such that
	\begin{equation}\label{ine4.33}
		K_6 \leq C \left[ R \ln R E'(R) \right]^{\frac{3}{2p}},\;\forall R\geq R_1.
	\end{equation}
\end{Lem}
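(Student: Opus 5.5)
The plan is to split $\|\Theta\|_{L^{2p'}(A_R)}^2$ by interpolation between $L^2$ and $L^6$. The $L^6$ part will supply the full power of $E'(R)$. The $L^2$ part, together with $R^{-1}\|u\|_{L^p(A_R)}$, will be shown to decay polynomially in $R$, so that it is bounded for $R\geq R_1$.

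Since $2p'\in[3,6)$ for $p\in(\frac32,3]$, I first write
$$\|\Theta\|_{L^{2p'}(A_R)}^2\leq \|\Theta\|_{L^2(A_R)}^{2-\frac3p}\|\Theta\|_{L^6(A_R)}^{\frac3p}.$$
By the Poincar\'e--Sobolev inequality (Lemma \ref{Lem2.2}, scale invariant) and \eqref{ine4.2},
$$\|\Theta\|_{L^6(A_R)}\leq C\|\nabla\theta\|_{L^2(A_R)}\leq C[RE'(R)]^{\frac12},$$
so $\|\Theta\|_{L^6(A_R)}^{3/p}\leq C[RE'(R)]^{\frac{3}{2p}}$. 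Hence it suffices to show
$$M(R):=R^{-1}\|u\|_{L^p(A_R)}\|\Theta\|_{L^2(A_R)}^{2-\frac3p}\leq C,\quad R\geq R_1.$$
Since $\ln R>1$ for $R\geq R_1>3$, this gives $K_6\leq C[RE'(R)]^{\frac{3}{2p}}\leq C[R\ln RE'(R)]^{\frac{3}{2p}}$.

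To bound $M(R)$, I apply Lemma \ref{Lem2.5} with $\rho=R$, $L=2$:
$$\|\Theta\|_{L^2(A_R)}\leq C\|u\|_{L^3(A_R)}\big(\|\nabla v\|_{L^2(A_R)}+R^{\frac12-\frac3q}\|v\|_{L^q(A_R)}\big)+C\|\nabla v\|_{L^2(A_R)}.$$
The pieces are controlled as follows.
\begin{itemize}
\item By Lemma \ref{Lem4.3}, $\|\nabla v\|_{L^2(A_R)}\leq f(2R)^{1/2}\leq C(\ln R)^{\tau_5/2}$.
\item By \eqref{ine4.15} and $\beta\leq\frac3q-\frac12$, $R^{\frac12-\frac3q}\|v\|_{L^q(A_R)}\leq C(\ln R)^{\mu}$.
\item For $\frac32<p<3$, interpolation gives $\|u\|_{L^3}\leq\|u\|_{L^p}^{\frac{p}{6-p}}\|u\|_{L^6}^{\frac{6-2p}{6-p}}$. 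Then \eqref{ine4.15} and Lemma \ref{Lem4.3} give $\|u\|_{L^3(A_R)}\leq CR^{\frac{p}{6-p}\alpha}(\ln R)^{\tau}$ for some $\tau>0$.
\item For $p=3$, directly $\|u\|_{L^3(A_R)}\leq CR^{\alpha}(\ln R)^\lambda$.
\end{itemize}
Altogether $\|\Theta\|_{L^2(A_R)}\leq CR^{\frac{p}{6-p}\alpha}(\ln R)^{\tau'}$ for some $\tau'>0$, and therefore
$$M(R)\leq CR^{\alpha-1+\frac{p}{6-p}\alpha\left(2-\frac3p\right)}(\ln R)^{\tau''}=CR^{\frac{p+3}{6-p}\alpha-1}(\ln R)^{\tau''}.$$

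The main point is checking that this power of $R$ is strictly negative, which is exactly the $\gamma$-free part of $\delta_4(0)$ in \eqref{ine3.42}. Using $\alpha\leq\frac2p-\frac13$,
$$\frac{p+3}{6-p}\alpha-1\leq-\frac{2p-3}{3p}<0,$$
with $p=3$ giving the value $-\frac13$, consistent with the direct estimate $2\alpha-1\leq-\frac13$. A strictly negative power beats every power of $\ln R$, so $M(R)\leq C$ for $R\geq R_1$ (after enlarging $C$), which yields \eqref{ine4.33}.

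The step I expect to be the main obstacle is keeping the powers of $\ln R$ coming from $f(2R)$ harmless. This works only because the exponent of $R$ is strictly negative, not merely nonpositive. So I must not route $\|\nabla v\|$ through the growth bound $R^\gamma$ of \eqref{ine4.15}, which could destroy this margin, but through the polylogarithmic bound of Lemma \ref{Lem4.3}.
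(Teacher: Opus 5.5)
Your proposal is correct and follows the paper's proof essentially step for step. You interpolate $\|\Theta\|_{L^{2p'}(A_R)}^2$ between $L^2$ and $L^6$, control the $L^6$ factor by the Poincar\'e--Sobolev inequality and \eqref{ine4.2}, and bound the $L^2$ factor via Lemma \ref{Lem2.5}, interpolation for $\|u\|_{L^3(A_R)}$, \eqref{ine4.15} and the logarithmic bound \eqref{ine4.14} on $f(2R)$ (rather than the $R^\gamma$ growth of $\nabla v$). This leaves the prefactor $R^{\frac{p+3}{6-p}\alpha-1}$, with $\frac{p+3}{6-p}\alpha-1\leq-\frac{2p-3}{3p}<0$, times a power of $\ln R$. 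The only cosmetic difference is that the paper keeps the two contributions from Lemma \ref{Lem2.5} separate, giving the exponents $-\frac{2p-3}{3p}$ and $-\frac{4p-6}{3p}$, whereas you merge them into the worse of the two.
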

\begin{proof}
Applying Lemma \ref{Lem2.5} with $L=2$ and $\rho=R$, and using the interpolation inequality, \eqref{ine4.15} and \eqref{ine4.14}, we have
	\begin{align}\label{ine4.34}
	\|\Theta\|_{L^2(A_R)} &\leq C\|u\|_{L^p(A_R)}^{\frac{p}{6-p}} \|u\|_{L^6(A_R)}^{\frac{6-2p}{6-p}} \left( \|\nabla v\|_{L^2(A_R)} + R^{\frac{1}{2}-\frac{3}{q}}\|v\|_{L^q(A_R)} \right) + C\|\nabla v\|_{L^2(A_R)} \notag\\
	&\leq C\|u\|_{L^p(A_R)}^{\frac{p}{6-p}} f^{\frac{3-p}{6-p}}(2R) \left[ f^{\frac{1}{2}} (2R)+ R^{\beta-\left(\frac{3}{q}-\frac{1}{2}\right)}\left(\ln R\right)^\mu\right] + C f^{\frac{1}{2}}(2R) \\
	&\leq C (\ln R)^{\frac{12-3p}{12-2p}\tau_{5}+\mu} \|u\|_{L^p(A_R)}^{\frac{p}{6-p}} + C (\ln R)^{\frac{\tau_{5}}{2}}.\notag
\end{align}
By the interpolation inequality, \eqref{ine4.34}, the Poincar\'{e}-Sobolev inequality, \eqref{ine4.15} and \eqref{ine4.2},  we obtain
\begin{align*}
	K_6 &\leq C R^{-1} \|u\|_{L^p(A_R)} \|\Theta\|_{L^2(A_R)}^{2-\frac{3}{p}} \|\Theta\|_{L^6(A_R)}^{\frac{3}{p}} \\
	&\leq C R^{-1} \|u\|_{L^p(A_R)} \left(\left[ (\ln R)^{\frac{12-3p}{12-2p}\tau_{5}+\mu} \|u\|_{L^p(A_R)}^{\frac{p}{6-p}} \right]^{2-\frac{3}{p}}
	+ (\ln R)^{\frac{\tau_{5}}{2}\left(2-\frac{3}{p}\right)}\right) \|\nabla \theta\|_{L^2(A_R)}
^{\frac{3}{p}} \\
	&\leq \left[ C R^{-\frac{2p-3}{3p}} (\ln R)^{\frac{p+3}{6-p}\lambda+\left(\frac{12-3p}{12-2p}\tau_{5}+\mu\right)\left(2-\frac{3}{p}\right)}
	+ C R^{-\frac{4p-6}{3p}} (\ln R)^{\lambda+\left(1-\frac{3}{2p}\right)\tau_{5}} \right] \|\nabla \theta\|_{L^2(A_R)}^{\frac{3}{p}}\\
&\leq C \|\nabla \theta\|_{L^2(A_R)}^{\frac{3}{p}} \\
	&\leq C \left[ R E'(R) \right]^{\frac{3}{2p}} \\
	&\leq C \left[ R \ln R E'(R) \right]^{\frac{3}{2p}}.
\end{align*}

\end{proof}

\begin{Lem}\label{Lem4.9}
Let the assumptions be the same as those in {\rm Theorem \ref{main2}}. Let $\tau_5$ be  the constant given by  {\rm Lemma \ref{Lem4.3}}. Let $\chi(p)$ be the function defined in {\rm Lemma \ref{Lem4.6}}. Assume $E(R)\not\equiv0$. Then there exist three positive constants $\tau_9\in(0,1)$, $R_{11}>3$ and $C$ such that for any $R\geq R_{11}$, it holds that
\begin{align}\label{ine4.35}
K_7 \leq &\frac{1}{8} E(R)+C\left[R\ln RE'(R)\right]^{\tau_9}+C\left[R\ln RE'(R)\right]^{\frac{3-p}{6-p}+\frac{1}{2}}\notag\\
&+C\chi(p)\left(\left[R\ln RE'(R)\right]^{\frac{3-p}{6-p}+\frac{6-3q}{12-2q}}+\left[R\ln RE'(R)\right]^{\frac{3-p}{6-p}}\right)\\
&+C\left[R\ln RE'(R)\right]^{\frac{6-3q}{12-2q}+\frac{1}{2}}+C[R\ln RE'(R)]^\frac{\tau_5}{1+\tau_5}+C[R\ln RE'(R)]^\frac{1}{2}.\notag
\end{align}
\end{Lem}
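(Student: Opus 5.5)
The plan is to estimate the single product $K_7=CR^{-1}\|v\|_{L^2(A_R)}\|\Theta\|_{L^2(A_R)}$. I will reduce every factor to $\|u\|_{L^p}$, $\|v\|_{L^q}$, $\|\nabla u\|_{L^2}$, $\|\nabla v\|_{L^2}$ on $A_R$. Then each gradient factor is turned into a power of $RE'(R)$ through \eqref{ine4.2}, the growth factors are bounded by \eqref{ine4.15}, and any leftover is either absorbed into $\frac18E(R)$ or controlled by \eqref{ine4.14}.

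\textbf{Step 1: reducing the factors.} Apply Lemma \ref{Lem2.5} with $L=2$, $\rho=R$:
\[
\|\Theta\|_{L^2(A_R)}\le C\|u\|_{L^3}\bigl(\|\nabla v\|_{L^2}+R^{\frac12-\frac3q}\|v\|_{L^q}\bigr)+C\|\nabla v\|_{L^2}.
\]
Interpolate
\[
\|u\|_{L^3}\le\|u\|_{L^p}^{\frac{p}{6-p}}\|u\|_{L^6}^{\frac{6-2p}{6-p}},\qquad
\|v\|_{L^2}\le\|v\|_{L^q}^{\frac{2q}{6-q}}\|v\|_{L^6}^{\frac{6-3q}{6-q}}.
\]
When $p=3$ the first is trivial, and this is why $\chi(p)$ appears. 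Then replace $\|u\|_{L^6}$ and $\|v\|_{L^6}$ using \eqref{ine4.19} and \eqref{ine4.30}, each of which splits into a gradient part and a part $R^{\frac12-\frac3p}\|u\|_{L^p}$ (resp.\ $R^{\frac12-\frac3q}\|v\|_{L^q}$).

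\textbf{Step 2: bounding the resulting terms.} Expanding gives finitely many terms of the form
\[
R^{a}(\ln R)^{b}\,\|\nabla u\|_{L^2}^{m}\|\nabla v\|_{L^2}^{n}
\quad\text{with}\quad \|\nabla u\|^m\|\nabla v\|^n\le C[RE'(R)]^{\frac{m+n}{2}}.
\]
The possible half-exponents are $\frac{3-p}{6-p}$ (from $\|u\|_{L^6}$), $\frac{6-3q}{12-2q}$ (from $\|v\|_{L^6}$) and $\frac12$ (from $\nabla v$). Their sums are exactly the exponents listed in \eqref{ine4.35}. The $R$-exponents $a$ are built from $\delta_6$, $\delta_8$, $\beta-(\frac3q-\frac12)$ and $\alpha-(\frac2p-\frac13)$, all nonpositive by \eqref{ine3.42}.

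Terms with strictly negative $R$-power are treated as in Lemma \ref{Lem4.6}: they tend to $0$, so by \eqref{ine4.21} they are bounded by $\frac1{16}E(R_2)\le\frac1{16}E(R)$ for $R$ large. Two such absorptions account for the $\frac18E(R)$.

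For the pure term $R^{-1}\|v\|_{L^2}\|\nabla v\|_{L^2}$ I will instead use \eqref{ine4.8} to get $K_3$-type and $\|\nabla v\|^2$-type contributions. These are handled exactly as in Lemmas \ref{Lem4.5} and \ref{Lem4.4}, producing $[R\ln RE']^{1/2}$ and $[R\ln RE']^{\tau_5/(1+\tau_5)}$.

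\textbf{Step 3: the critical terms (main obstacle).} The hard part is the terms with zero $R$-power, chiefly the leading one
\[
R^{\delta_6}(\ln R)^{\frac{p}{6-p}\lambda+\frac{2q}{6-q}\mu}[RE'(R)]^{\frac{3-p}{6-p}+\frac{6-3q}{12-2q}+\frac12}
\quad\text{when }\delta_6=0.
\]
If $\frac{6-2p}{6-p}+\frac{6-3q}{6-q}<1$, Assumption \ref{a1.4}(i) says the log exponent is at most the $RE'$ exponent. Hence the log factor is absorbed into $(\ln R)^{\text{same exponent}}$, giving $C[R\ln RE']^{\frac{3-p}{6-p}+\frac{6-3q}{12-2q}+\frac12}$.

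If the sum is $\ge1$, then $\gamma=0$ by Assumption \ref{a1.2}(iii). In that case I will interpolate as in Lemma \ref{Lem4.4}: split the gradient product into a part bounded by a power of $f(2R)\le C(\ln R)^{\tau_5}$ and a part $[RE']^{\tau_9}$ with $\tau_9<1$. Assumption \ref{a1.4}(ii), namely $\frac{p}{6-p}\lambda+\frac{2q}{6-q}\mu<1$, leaves room to choose $\tau_9\in(0,1)$ so that all log powers are dominated by $(\ln R)^{\tau_9}$.

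The analogous borderline terms involving $\beta=\frac3q-\frac12$ use Assumption \ref{a1.4}(iii) in the same way. Finally, $R_{11}$ is taken as the maximum of all thresholds produced above.
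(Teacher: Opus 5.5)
Your overall architecture matches the paper's. You split $K_7$ via Lemma \ref{Lem2.5} into three pieces, interpolate $\|u\|_{L^3}$ and $\|v\|_{L^2}$, expand $\|u\|_{L^6}$ and $\|v\|_{L^6}$ by \eqref{ine4.19} and \eqref{ine4.30}, absorb the decaying pieces into $\frac1{16}E(R_2)$, treat the pure $\|v\|_{L^2}\|\nabla v\|_{L^2}$ piece through \eqref{ine4.8} and Lemmas \ref{Lem4.4}, \ref{Lem4.5}, and invoke Assumption \ref{a1.4} at the borderline. There is one small slip. The lower-order part of \eqref{ine4.19} produces the $R$-power $\alpha-(\frac3p-\frac12)$, not $\alpha-(\frac2p-\frac13)$. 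That exponent is strictly negative, which is why those terms always decay.

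The genuine gap is in Step 3, in the case $S:=\frac{6-2p}{6-p}+\frac{6-3q}{6-q}\ge 1$ with $\delta_6=0$. There the term is $(\ln R)^{L}\|\nabla u\|_{L^2(A_R)}^{\frac{6-2p}{6-p}}\|\nabla v\|_{L^2(A_R)}^{\frac{6-3q}{6-q}+1}$, with $L=\frac{p}{6-p}\lambda+\frac{2q}{6-q}\mu<1$ and total gradient exponent $S+1\ge 2$.

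To land on $[RE'(R)]^{\tau_9}$ with $\tau_9<1$, you must strip off a gradient exponent $e>S-1$. If you bound that part by $f(2R)\le C(\ln R)^{\tau_5}$, as you propose, you pick up an extra $(\ln R)^{\tau_5 e/2}$. You would then need $L+\tau_5(S-1)/2<\tau_9<1$. Nothing guarantees this when $S>1$, because $\tau_5$ is just some constant from Lemma \ref{Lem4.3} with no smallness; its proof actually produces large log powers here, e.g.\ from the $\frac{2}{\varepsilon}$-powers in \eqref{ine3.44}. The Lemma \ref{Lem4.4} trick works only because $K_1$ has total exponent exactly one and no logarithmic prefactor. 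Assumption \ref{a1.4}(ii) gives you room $1-L$, not room to pay $\tau_5(S-1)/2$.

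The missing idea is to use the fact you already observed, $\gamma=0$, directly. By \eqref{ine4.15}, $\|\nabla v\|_{L^2(A_R)}\le CR^{\gamma}=C$, with no logarithmic loss at all. The paper proceeds as follows:
\begin{itemize}
\item Choose $\tau_7\in(2L-S,\,2-S)$ with $\tau_7>0$; this is possible since $L<1$ and $S<2$.
\item Discard $\|\nabla v\|_{L^2(A_R)}^{1-\tau_7}\le C$, using $1-\tau_7>S-1$.
\item Bound the remaining gradients by \eqref{ine4.2} as $[RE'(R)]^{\tau_8}$, where $\tau_8=\frac{\tau_7+S}{2}\in(L,1)$.
\item Since $(\ln R)^{L}\le(\ln R)^{\tau_8}$, this gives $C[R\ln R\,E'(R)]^{\tau_8}$, which is $\tau_9$ in this case.
\end{itemize}
With that replacement in the $S\ge1$, $\delta_6=0$ case, the rest of your plan goes through.
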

\begin{proof}
By Lemma \ref{Lem2.5}, we have
\begin{align}\label{ine4.36}
		K_7 \leq& C R^{-1} \|v\|_{L^2(A_R)}\|u\|_{L^3(A_R)}\|\nabla v\|_{L^2(A_R)}+CR^{-\frac{3}{q}-\frac{1}{2}}\|v\|_{L^2(A_R)}\|u\|_{L^3(A_R)}\|v\|_{L^q(A_R)}\notag\\
        &+C R^{-1} \|v\|_{L^2(A_R)}\|\nabla v\|_{L^2(A_R)}\\
		=&:K_{71} + K_{72} + K_{73}.\notag
\end{align}

We first derive an estimate for $K_{71}$. When $p=3$, we deduce from  Assumptions \ref{a1.3} and \ref{a1.4}(iii) that $\lambda=0$ and $\beta<\frac{3}{q}-\frac{1}{2}$. Using Assumption \ref{a1.4}(i), we conclude
\begin{align*}
&R^{\alpha+\frac{2q}{6-q}\beta-1}(\ln R)^{\frac{2q}{6-q}\mu}\leq C(\ln R)^{\frac{6-3q}{12-2q}+\frac{1}{2}},\\
&R^{\alpha+\frac{2q}{6-q}\beta-1+\frac{6-3q}{6-q}\left[\beta-\left(\frac{3}{q}-\frac{1}{2}\right)\right]}(\ln R)^{\mu}\leq C(\ln R)^\frac{1}{2}.
\end{align*}
By the interpolation inequality, \eqref{ine4.30}, \eqref{ine4.15}, \eqref{ine4.2} and the above two inequalities, we obtain
\begin{align}\label{ine4.36a}
		K_{71} \leq& C R^{-1}\|u\|_{L^3(A_R)}\|v\|_{L^q(A_R)}^{\frac{2q}{6-q}}\|v\|_{L^6(A_R)}^{\frac{6-3q}{6-q}}\|\nabla v\|_{L^2(A_R)}\notag\\
               \leq& C R^{-1}\|u\|_{L^3(A_R)}\|v\|_{L^q(A_R)}^{\frac{2q}{6-q}}\left(\|\nabla v\|_{L^2(A_R)}^{\frac{6-3q}{6-q}}+\left(R^{\frac{1}{2}-\frac{3}{q}} \|v\|_{L^q(A_R)}\right)^{\frac{6-3q}{6-q}}\right)\|\nabla v\|_{L^2(A_R)}\notag\\
               \leq&C R^{\alpha+\frac{2q}{6-q}\beta-1}(\ln R)^{\frac{2q}{6-q}\mu}\left[R E'(R)\right]^{\frac{6-3q}{12-2q}+\frac{1}{2}}\\
               &+C R^{\alpha+\frac{2q}{6-q}\beta-1+\frac{6-3q}{6-q}\left[\beta-\left(\frac{3}{q}-\frac{1}{2}\right)\right]}(\ln R)^{\mu}\left[R E'(R)\right]^{\frac{1}{2}}\notag\\
               \leq&C\left[R\ln RE'(R)\right]^{\frac{6-3q}{12-2q}+\frac{1}{2}}+C\left[R\ln R E'(R)\right]^{\frac{1}{2}}.\notag
\end{align}

When $\frac{3}{2}<p<3$, the estimate for $K_{71}$ becomes more involved. By the interpolation inequality, \eqref{ine4.19} and \eqref{ine4.30}, we obtain
\begin{align}\label{ine4.37}
	K_{71} \leq& C R^{-1} \|u\|_{L^p(A_R)}^{\frac{p}{6-p}}\|v\|_{L^q(A_R)}^{\frac{2q}{6-q}} \|u\|_{L^6(A_R)}^{\frac{6-2p}{6-p}}\|v\|_{L^6(A_R)}^{\frac{6-3q}{6-q}}  \|\nabla v\|_{L^2(A_R)}\notag\\
	\leq &C R^{-1} \|u\|_{L^p(A_R)}^{\frac{p}{6-p}}\|v\|_{L^q(A_R)}^{\frac{2q}{6-q}}\|\nabla u\|_{L^2(A_R)}^{\frac{6-2p}{6-p}}\|\nabla v\|_{L^2(A_R)}^{\frac{6-3q}{6-q}}\|\nabla v\|_{L^2(A_R)}\notag\\
&+C R^{-1} \|u\|_{L^p(A_R)}^{\frac{p}{6-p}}\|v\|_{L^q(A_R)}^{\frac{2q}{6-q}}\|\nabla u\|_{L^2(A_R)}^{\frac{6-2p}{6-p}}\left(R^{\frac{1}{2}-\frac{3}{q}} \|v\|_{L^q(A_R)}\right)^{\frac{6-3q}{6-q}}\|\nabla v\|_{L^2(A_R)}\\
&+C R^{-1} \|u\|_{L^p(A_R)}^{\frac{p}{6-p}}\|v\|_{L^q(A_R)}^{\frac{2q}{6-q}}\left(R^{\frac{1}{2}-\frac{3}{p}} \|u\|_{L^p(A_R)}\right)^{\frac{6-2p}{6-p}}
\|v\|_{L^6(A_R)}^{\frac{6-3q}{6-q}}\|\nabla v\|_{L^2(A_R)}\notag\\
=&:K_{711} + K_{712} + K_{713}.\notag
\end{align}

To begin with, we establish a bound for $K_{711}$ via case-by-case analysis.
When $\frac{6-2p}{6-p} + \frac{6-3q}{6-q} < 1$, using \eqref{ine4.15}, \eqref{ine4.2} and Assumptions \ref{a1.2}(i), \ref{a1.4}(i), we obtain
\begin{align}\label{ine4.38}
	K_{711} \leq &C(R)^{\frac{p}{6-p}\alpha+\frac{2q}{6-q}\beta-1}(\ln R)^{\frac{p}{6-p}\lambda+\frac{2q}{6-q}\mu}\left[R E'(R)\right]^{\tau_6}\leq C\left[R \ln R E'(R)\right]^{\tau_6},
\end{align}
where $\tau_6$ is defined by
$$
\tau_6=\frac{3-p}{6-p}+\frac{6-3q}{12-2q}+\frac{1}{2}.
$$
When $\frac{6-2p}{6-p} + \frac{6-3q}{6-q}\geq1$ and $\gamma>0$, it follows from Assumption \ref{a1.2}(iii) that
$$
\frac{p}{6-p}\alpha+\frac{2q}{6-q}\beta<1.
$$
Using \eqref{ine4.15}, \eqref{ine4.14} and \eqref{ine4.2}, we get
\begin{align}\label{ine4.39}
K_{711} \leq&C R^{-1} \|u\|_{L^p(A_R)}^{\frac{p}{6-p}}\|v\|_{L^q(A_R)}^{\frac{2q}{6-q}}\left[f(2R)\right]^{\frac{3-p}{6-p}+\frac{6-3q}{12-2q}}\|\nabla v\|_{L^2(A_R)}\notag\\
\leq&C R^{\frac{p}{6-p}\alpha+\frac{2q}{6-q}\beta-1}(\ln R)^{\frac{p}{6-p}\lambda+\frac{2q}{6-q}\mu+\left(\frac{3-p}{6-p}+\frac{6-3q}{12-2q}\right)\tau_5} \left[RE'(R)\right]^{\frac{1}{2}}\notag\\
\leq&C\left[RE'(R)\right]^{\frac{1}{2}}\\
\leq&C\left[R\ln RE'(R)\right]^{\frac{1}{2}}.\notag
\end{align}
When $\frac{6-2p}{6-p} + \frac{6-3q}{6-q}\geq1$ and $\gamma=0$, thanks to Assumption \ref{a1.4}(ii) and the fact that $\frac{6-2p}{6-p} + \frac{6-3q}{6-q}\in[1,2)$, we can choose a positive number
$\tau_7$ such that
$$
2\left(\frac{p}{6-p}\lambda+\frac{2q}{6-q}\mu\right)-\frac{6-2p}{6-p}-\frac{6-3q}{6-q}<\tau_7<2-\frac{6-2p}{6-p} -\frac{6-3q}{6-q}.
$$
We denote
$$
\tau_8=\frac{\tau_7}{2}+\frac{1}{2}\left(\frac{6-2p}{6-p} + \frac{6-3q}{6-q}\right).
$$
It is not difficult to verify that
$$
\frac{p}{6-p}\lambda+\frac{2q}{6-q}\mu<\tau_8<1.
$$
Using \eqref{ine4.15} and \eqref{ine4.2}, we conclude that
\begin{align}\label{ine4.40}
K_{711} \leq&C R^{-1} \|u\|_{L^p(A_R)}^{\frac{p}{6-p}}\|v\|_{L^q(A_R)}^{\frac{2q}{6-q}}\|\nabla v\|_{L^2(A_R)}^{1-\tau_7}\|\nabla u\|_{L^2(A_R)}^{\frac{6-2p}{6-p}}\|\nabla v\|_{L^2(A_R)}^{\frac{6-3q}{6-q}}\|\nabla v\|_{L^2(A_R)}^{\tau_7}\notag\\
\leq&C R^{\frac{p}{6-p}\alpha+\frac{2q}{6-q}\beta-1}(\ln R)^{\frac{p}{6-p}\lambda+\frac{2q}{6-q}\mu} \left[RE'(R)\right]^{\tau_8}\\
\leq&C\left[R\ln RE'(R)\right]^{\tau_8}.\notag
\end{align}
For the sake of convenience, we define a new parameter $\tau_9$ as follows:
\begin{equation*}
\tau_9=
\begin{cases}
\tau_6, & \text{ if  }\frac{6-2p}{6-p} + \frac{6-3q}{6-q} < 1, \\
\tau_8, & \text{ if  } \frac{6-2p}{6-p} + \frac{6-3q}{6-q}\geq1\text{ and }\gamma=0,\\
\frac{1}{2}, & \text{ if  } \frac{6-2p}{6-p} + \frac{6-3q}{6-q}\geq1\text{ and }\gamma>0.
\end{cases}
\end{equation*}
Then putting \eqref{ine4.38}, \eqref{ine4.39} and \eqref{ine4.40} together, we obtain
\begin{align}\label{ine4.41}
K_{711}\leq&C\left[R\ln RE'(R)\right]^{\tau_9}.
\end{align}

Next, we deal with the terms $K_{712}$ and $K_{713}$. For $\frac{3}{2}<p<3$ and $R>3$, we claim
 $$
 R^{\frac{p}{6-p}\alpha+\frac{2q}{6-q}\beta-1+\frac{6-3q}{6-q}\left[\beta-\left(\frac{3}{q}-\frac{1}{2}\right)\right]}(\ln R)^{\frac{p}{6-p}\lambda+\mu}\leq C(\ln R)^{\frac{3-p}{6-p}+\frac{1}{2}}.
 $$
Indeed, when $q=2$, using Assumption \ref{a1.4}(i), we have
 $$
 R^{\frac{p}{6-p}\alpha+\frac{2q}{6-q}\beta-1+\frac{6-3q}{6-q}\left[\beta-\left(\frac{3}{q}-\frac{1}{2}\right)\right]}(\ln R)^{\frac{p}{6-p}\lambda+\mu}\leq C(\ln R)^{\frac{p}{6-p}\lambda+\mu}\leq C(\ln R)^{\frac{3-p}{6-p}+\frac{1}{2}}.
 $$
 When $1\leq q<2$ and $\beta<\frac{3}{q}-\frac{1}{2}$, we find
 $$
 R^{\frac{p}{6-p}\alpha+\frac{2q}{6-q}\beta-1+\frac{6-3q}{6-q}\left[\beta-\left(\frac{3}{q}-\frac{1}{2}\right)\right]}(\ln R)^{\frac{p}{6-p}\lambda+\mu}\leq C\leq C(\ln R)^{\frac{3-p}{6-p}+\frac{1}{2}}.
 $$
 When $1\leq q<2$ and $\beta=\frac{3}{q}-\frac{1}{2}$, using Assumption \ref{a1.4}(iii), we get
 $$
 R^{\frac{p}{6-p}\alpha+\frac{2q}{6-q}\beta-1+\frac{6-3q}{6-q}\left[\beta-\left(\frac{3}{q}-\frac{1}{2}\right)\right]}(\ln R)^{\frac{p}{6-p}\lambda+\mu}\leq C(\ln R)^{\frac{p}{6-p}\lambda+2\mu}\leq C(\ln R)^{\frac{3-p}{6-p}+\frac{1}{2}}.
 $$
Employing \eqref{ine4.15} and \eqref{ine4.2}, we obtain
\begin{align}\label{ine4.42}
	K_{712}\leq&C R^{\frac{p}{6-p}\alpha+\frac{2q}{6-q}\beta-1+\frac{6-3q}{6-q}\left[\beta-\left(\frac{3}{q}-\frac{1}{2}\right)\right]}(\ln R)^{\frac{p}{6-p}\lambda+\mu}\left[RE'(R)\right]^{\frac{3-p}{6-p}+\frac{1}{2}}\notag\\
  \leq&C\left[R\ln RE'(R)\right]^{\frac{3-p}{6-p}+\frac{1}{2}}.
\end{align}
Employing \eqref{ine4.15} and \eqref{ine4.14}, we obtain
\begin{align*}
K_{713} \leq&C R^{-1} \|u\|_{L^p(A_R)}^{\frac{p}{6-p}}\|v\|_{L^q(A_R)}^{\frac{2q}{6-q}}\left(R^{\frac{1}{2}-\frac{3}{p}} \|u\|_{L^p(A_R)}\right)^{\frac{6-2p}{6-p}}\left[f(2R)\right]^{\frac{6-2q}{6-q}}\\
\leq&CR^{\frac{6-2p}{6-p}\left[\alpha-\left(\frac{3}{p}-\frac{1}{2}\right)\right]}(\ln R)^{\lambda+\frac{2q}{6-q}\mu+\frac{6-2q}{6-q}\tau_5},
\end{align*}
which implies $\lim\limits_{R\rightarrow+\infty}K_{713}=0$.
Consequently, there exists a constant $R_7>R_2$ such that
\begin{align}\label{ine4.43}
K_{713}\leq\frac{1}{16}E(R_2)\leq \frac{1}{16}E(R),\;\forall R\geq R_7.
\end{align}
Combining \eqref{ine4.36a}, \eqref{ine4.37}, \eqref{ine4.41}, \eqref{ine4.42} and \eqref{ine4.43}, we have
\begin{align}\label{ine4.44}
K_{71}\leq&\frac{1}{16}E(R)+C\left[R\ln RE'(R)\right]^{\tau_9}+C\left[R\ln RE'(R)\right]^{\frac{3-p}{6-p}+\frac{1}{2}}\notag\\
&+C\left[R\ln RE'(R)\right]^{\frac{6-3q}{12-2q}+\frac{1}{2}}+C\left[R\ln R E'(R)\right]^{\frac{1}{2}},\;\forall R\geq R_7.
\end{align}

We then turn to handle $K_{72}$.
When $\beta<\frac{3}{q}-\frac{1}{2}$, by the interpolation inequality, \eqref{ine4.15} and \eqref{ine4.14}, we obtain
\begin{align*}
	K_{72} \leq& C R^{-\frac{3}{q}-\frac{1}{2}} \|u\|_{L^p(A_R)}^{\frac{p}{6-p}}\|v\|_{L^q(A_R)}^{\frac{2q}{6-q}} \|u\|_{L^6(A_R)}^{\frac{6-2p}{6-p}}\|v\|_{L^6(A_R)}^{\frac{6-3q}{6-q}}  \|v\|_{L^q(A_R)}\\
 \leq&C R^{-\frac{3}{q}-\frac{1}{2}}\|u\|_{L^p(A_R)}^{\frac{p}{6-p}}\|v\|_{L^q(A_R)}^{\frac{2q}{6-q}}\left[f(2R)\right]^{\frac{3-p}{6-p}+\frac{6-3q}{12-2q}}\|v\|_{L^q(A_R)}\\
 \leq&C R^{\frac{p}{6-p}\alpha+\frac{2q}{6-q}\beta-1+\beta-\left(\frac{3}{q}-\frac{1}{2}\right)}(\ln R)^{\frac{p}{6-p}\lambda+\frac{6+q}{6-q}\mu+\left(\frac{3-p}{6-p}+\frac{6-3q}{12-2q}\right)\tau_5}\\
 \leq&C R^{\frac{1}{2}\left[\beta-\left(\frac{3}{q}-\frac{1}{2}\right)\right]},
\end{align*}
which forces $\lim\limits_{R\rightarrow+\infty}K_{72}=0$. Hence, there exists a constant $R_8>R_2$ such that
\begin{align}\label{ine4.45}
K_{72}\leq\frac{1}{16}E(R_2)\leq \frac{1}{16}E(R),\;\forall R\geq R_8.
\end{align}

When $\beta=\frac{3}{q}-\frac{1}{2}$, we adopt a different strategy to handle $K_{72}$. It is noted that when $\beta=\frac{3}{q}-\frac{1}{2}$, we require $p\neq3$.
By the interpolation inequality, \eqref{ine4.19} and \eqref{ine4.30}, we obtain
\begin{align}\label{ine4.46}
	K_{72} \leq& CR^{-\frac{3}{q}-\frac{1}{2}}\|u\|_{L^p(A_R)}^{\frac{p}{6-p}}\|v\|_{L^q(A_R)}^{\frac{2q}{6-q}} \|u\|_{L^6(A_R)}^{\frac{6-2p}{6-p}}\|v\|_{L^6(A_R)}^{\frac{6-3q}{6-q}} \|v\|_{L^q(A_R)}\notag\\
	\leq &CR^{-\frac{3}{q}-\frac{1}{2}}\|u\|_{L^p(A_R)}^{\frac{p}{6-p}}\|v\|_{L^q(A_R)}^{\frac{2q}{6-q}}\|\nabla u\|_{L^2(A_R)}^{\frac{6-2p}{6-p}}\|\nabla v\|_{L^2(A_R)}^{\frac{6-3q}{6-q}}\|v\|_{L^q(A_R)}\notag\\
&+CR^{-\frac{3}{q}-\frac{1}{2}}\|u\|_{L^p(A_R)}^{\frac{p}{6-p}}\|v\|_{L^q(A_R)}^{\frac{2q}{6-q}}\|\nabla u\|_{L^2(A_R)}^{\frac{6-2p}{6-p}}\left(R^{\frac{1}{2}-\frac{3}{q}} \|v\|_{L^q(A_R)}\right)^{\frac{6-3q}{6-q}}\|v\|_{L^q(A_R)}\\
&+CR^{-\frac{3}{q}-\frac{1}{2}}\|u\|_{L^p(A_R)}^{\frac{p}{6-p}}\|v\|_{L^q(A_R)}^{\frac{2q}{6-q}}\left(R^{\frac{1}{2}-\frac{3}{p}} \|u\|_{L^p(A_R)}\right)^{\frac{6-2p}{6-p}} \|v\|_{L^6(A_R)}^{\frac{6-3q}{6-q}}\|v\|_{L^q(A_R)}\notag\\
=&:K_{721} + K_{722} + K_{723}.\notag
\end{align}
Using \eqref{ine4.15}, \eqref{ine4.2} and Assumption \ref{a1.4}(iii), we obtain
\begin{align}\label{ine4.47}
K_{721}\leq&CR^{\frac{p}{6-p}\alpha+\frac{2q}{6-q}\beta-1+\beta-\left(\frac{3}{q}-\frac{1}{2}\right)}(\ln R)^{\frac{p}{6-p}\lambda+\frac{6+q}{6-q}\mu} \left[RE'(R)\right]^{\frac{3-p}{6-p}+\frac{6-3q}{12-2q}}\notag\\
\leq&C(\ln R)^{\frac{p}{6-p}\lambda+2\mu} \left[RE'(R)\right]^{\frac{3-p}{6-p}+\frac{6-3q}{12-2q}}\notag\\
\leq&C(\ln R)^\frac{3-p}{6-p} \left[RE'(R)\right]^{\frac{3-p}{6-p}+\frac{6-3q}{12-2q}}\\
\leq&C\left[R\ln RE'(R)\right]^{\frac{3-p}{6-p}+\frac{6-3q}{12-2q}},\notag
\end{align}
and
\begin{align}\label{ine4.48}
K_{722}\leq&CR^{\frac{p}{6-p}\alpha+\frac{2q}{6-q}\beta-1+\frac{12-4q}{6-q}\left[\beta-\left(\frac{3}{q}-\frac{1}{2}\right)\right]}(\ln R)^{\frac{p}{6-p}\lambda+2\mu} \left[RE'(R)\right]^{\frac{3-p}{6-p}}\notag\\
\leq&C\left[R\ln RE'(R)\right]^{\frac{3-p}{6-p}}.
\end{align}
Using \eqref{ine4.15} and \eqref{ine4.14}, we obtain
\begin{align*}
K_{723}\leq&CR^{\frac{6-2p}{6-p}\left[\alpha-\left(\frac{3}{p}-\frac{1}{2}\right)\right]+\frac{p}{6-p}\alpha+\frac{2q}{6-q}\beta-1+\beta-\left(\frac{3}{q}-\frac{1}{2}\right)}(\ln R)^{\lambda+\frac{6+q}{6-q}\mu} \left[f(2R)\right]^{\frac{6-3q}{12-2q}}\\
\leq&CR^{\frac{6-2p}{6-p}\left[\alpha-\left(\frac{3}{p}-\frac{1}{2}\right)\right]}(\ln R)^{\lambda+\frac{6+q}{6-q}\mu+\frac{6-3q}{12-2q}\tau_5},
\end{align*}
which implies $\lim\limits_{R\rightarrow+\infty}K_{723}=0$.
Thus, there exists a constant $R_9>R_2$ such that
\begin{align}\label{ine4.49}
	K_{723}\leq\frac{1}{16}E(R_2)\leq \frac{1}{16}E(R),\;\forall R\geq R_9.
\end{align}
We define a constant $R_{10}$ by
\begin{equation*}
R_{10}=
\begin{cases}
R_8, & \text{ if  }\;0\leq\beta<\frac{3}{q}-\frac{1}{2}, \\
R_9, & \text{ if  }\;\beta=\frac{3}{q}-\frac{1}{2}.
\end{cases}
\end{equation*}
Collecting \eqref{ine4.45}, \eqref{ine4.46}, \eqref{ine4.47}, \eqref{ine4.48} and \eqref{ine4.49},
we conclude that for any $R\geq R_{10}$, it holds that
\begin{align}\label{ine4.50}
K_{72}\leq \frac{1}{16}E(R)+C\chi(p)\left(\left[R\ln RE'(R)\right]^{\frac{3-p}{6-p}+\frac{6-3q}{12-2q}}+\left[R\ln RE'(R)\right]^{\frac{3-p}{6-p}}\right).
\end{align}

Using \eqref{ine4.8}, \eqref{ine4.16} and \eqref{ine4.17}, we derive
\begin{align}\label{ine4.51}
	K_{73} &\leq C\|\nabla v\|_{L^2 (A_R)}^2+CR^{\frac{1}{2}-\frac{3}{q}}\|v\|_{L^q(A_R)}\|\nabla v\|_{L^2 (A_R)}\notag\\
           &\leq C[R\ln RE'(R)]^\frac{\tau_5}{1+\tau_5}+C[R\ln RE'(R)]^\frac{1}{2}.
\end{align}
We denote $R_{11}=\max\{R_7,R_{10}\}$. Combining \eqref{ine4.44}, \eqref{ine4.50} and \eqref{ine4.51}, we get \eqref{ine4.35}.
\end{proof}

Now we are ready to prove Theorem \ref{main2}.

\begin{proof}[{\bf Proof of Theorem \ref{main2}}]
We first claim that $E(R)\equiv0$, and we prove this by contradiction.
Assume that $E(R)\not\equiv0$. Then \eqref{ine4.21} holds. Using Lemmas \ref{Lem4.2}, \ref{Lem4.4}, \ref{Lem4.5}, \ref{Lem4.6}, \ref{Lem4.7}, \ref{Lem4.8} and \ref{Lem4.9}, we derive
\begin{align}\label{ine4.52}
E(R)\leq &\frac{1}{4} E(R)+C[R\ln RE'(R)]^\frac{\tau_5}{1+\tau_5}+C[R\ln RE'(R)]^\frac{1}{2}+C\left[R\ln R E'(R)\right]^{1-\frac{(2p-3)q}{(6-q)p}}\notag\\
&+C\left[R\ln R E'(R)\right]^\frac{3}{2p}+C\left[R\ln RE'(R)\right]^{\tau_9}+C\left[R\ln RE'(R)\right]^{\frac{3-p}{6-p}+\frac{1}{2}}\notag\\
&+C\left[R\ln RE'(R)\right]^{\frac{6-3q}{12-2q}+\frac{1}{2}}+C\chi(p)\left[R\ln RE'(R)\right]^\frac{9-3p}{6-p}\\
&+C\chi(p)\left[R\ln RE'(R)\right]^{\frac{3-p}{6-p}+\frac{6-3q}{12-2q}}+C\chi(p)\left[R\ln RE'(R)\right]^{\frac{3-p}{6-p}}\notag\\
=&:\frac{1}{4} E(R)+\sum_{i=1}^{10}M_i,\notag
\end{align}
where $R\geq R_0:=\max\{R_5,R_6,R_{11}\}$.

Let $\tau$ be the maximum among the following eight numbers
$$
\frac{\tau_5}{1+\tau_5},\;1-\frac{(2p-3)q}{(6-q)p},\;\frac{3}{2p},\;\tau_9,\;\frac{3-p}{6-p}+\frac{1}{2},\;\frac{6-3q}{12-2q}+\frac{1}{2},\;\frac{9-3p}{6-p},\; \frac{3-p}{6-p}+\frac{6-3q}{12-2q}.
$$
It is not difficult to verify that $\tau\in(0,1)$.
By the Young inequality and \eqref{ine4.21}, we obtain
\begin{align}\label{ine4.53}
	M_i&\leq \frac{1}{40}E(R_2)+C[R\ln R E'(R)]^\tau\notag\\
&\leq \frac{1}{40}E(R)+C[R\ln R E'(R)]^\tau,
\end{align}
where  $i=1,2,\cdots,10$.

Combining \eqref{ine4.52} and \eqref{ine4.53}, we obtain
\begin{align*}
E(R)&\leq \frac{1}{2}E(R)+C[R\ln R E'(R)]^\tau,
\end{align*}
	which implies
	\begin{align*}
		E(R)
		&\leq C\left[R\ln RE'(R)\right]^\tau.
	\end{align*}
	Consequently, it follows that
	\begin{align*}
		\ln\ln R-\ln\ln R_0=\int_{R_0}^{R}\frac{1}{\rho\ln\rho}d\rho\leq\int_{R_0}^{R}\frac{CE'(\rho)}{E^{\frac{1}{\tau}}(\rho)}d\rho\leq CE(R_0)^{1-\frac{1}{\tau}}<+\infty.
	\end{align*}
	Letting $R\rightarrow+\infty$, the above inequality leads to a contradiction. Therefore, $E(R)\equiv0$.
	
	Thanks to the simple inequality
	$$\|\nabla u\|_{L^2(B_R)}^2+\|\nabla v\|_{L^2(B_R)}^2+\|\nabla \theta\|_{L^2(B_R)}^2\leq E(R),$$
	we conclude that $u,v$ are constant vectors and $\theta$ is a constant. Finally, the limit conditions in $\mathrm{(B1)}$ or $\mathrm{(B2)}$ force $u,v$ to be zero.
\end{proof}

\subsection*{Acknowledgements.}
This work was supported by Science Foundation for the Excellent Youth Scholars of Higher Education of Anhui Province Grant No. 2023AH030073 and Domestic Study and Research Support Program for Young Key Teachers of Higher Education of Anhui Province Grant No. JNFX2025027.

\subsection*{Data Availability Statement}
No data was used for the research described in the article.

\subsection*{Conflict of Interest Statement}
The authors have no relevant financial or non-financial interests to disclose.

 \vspace {0.1cm}

\begin {thebibliography}{DUMA}

\bibitem{BGWX25} J. Bang, C. Gui, Y. Wang, C. Xie, Liouville-type theorems for steady solutions to the Navier-Stokes system in a slab, J. Fluid Mech. 1005 (2025), Paper No. A6, 35 pp.

\bibitem{CPZ20} B. Carrillo, X. Pan, Q.S. Zhang, Decay and vanishing of some axially symmetric D-solutions of the Navier-Stokes equations, J. Funct. Anal. 279(1) (2020), 108504, 49 pp.

\bibitem{CPZZ20} B. Carrillo, X. Pan, Q.S. Zhang, N. Zhao, Decay and vanishing of some D-solutions of the Navier-Stokes equations, Arch. Ration. Mech. Anal. 237(3) (2020) 1383-1419.

\bibitem{Chae14} D. Chae, Liouville-type theorems for the forced Euler equations and the Navier-Stokes equations, Comm. Math. Phys. 326(1) (2014) 37-48.

\bibitem{Chae25} D. Chae, On the Liouville type theorems for the stationary Navier-Stokes equations in $\mathbb{R}^3$, J. Differential Equations 445 (2025), Paper No. 113597, 17 pp.

\bibitem{Chae26} D. Chae, Liouville type theorems for the stationary Navier-Stokes equations in $\mathbb{R}^3$, Comm. Math. Phys. 407 (2026), no. 3, Paper No. 53, 11 pp.

\bibitem{CL24} D. Chae, J. Lee, On Liouville type results for the stationary MHD in $\mathbb{R}^3$, Nonlinearity 37(9) (2024), Paper No. 095006, 15 pp.

\bibitem{CW16} D. Chae, J. Wolf, On Liouville type theorems for the steady Navier-Stokes equations in $\mathbb{R}^{3}$, J. Differ. Equ.  261 (2016) 5541-5560.

\bibitem{CW19} D. Chae, J. Wolf, On Liouville type theorem for the stationary Navier-Stokes equations, Calc. Var. Partial Differ. Equ. 58(3) (2019), Paper No. 111, 11 pp.

\bibitem{CJL21} D. Chamorro, O. Jarr\'{\i}n, P.-G. Lemari\'{e}-Rieusset, Some Liouville theorems for stationary Navier-Stokes equations in Lebesgue and Morrey spaces, Ann. Inst. H. Poincar\'{e} C Anal. Non Lin\'{e}aire 38(3) (2021) 689-710.

\bibitem{CIY24} Y. Cho, H. In, M. Yang, New Liouville-type theorem for the stationary tropical climate model, Appl. Math. Lett. 153 (2024), Paper No. 109039, 5 pp.

\bibitem{CNY24} Y. Cho, J. Neustupa, M. Yang, New Liouville type theorems for the stationary Navier-Stokes, MHD, and Hall-MHD equations, Nonlinearity 37(3) (2024), Paper No. 035007, 22 pp.

\bibitem{CY26a} Y. Cho, M. Yang, Refined Liouville-type theorems for the stationary Navier-Stokes equations, Nonlinear Anal. Real World Appl. 91 (2026), Paper No. 104599, 9 pp.

\bibitem{CY26} Y. Cho, M. Yang, Liouville-type theorems for the stationary tropical climate model without temperature assumptions, ZAMM Z. Angew. Math. Mech. 106(5) (2026), Paper No. e70481, 18 pp.

\bibitem{CoY26} M.P. Coiculescu, J. Yang, Conditional Liouville theorems for the Navier-Stokes equations, Calc. Var. Partial Differential Equations 65 (2026), no. 1, Paper No. 22, 28 pp.

\bibitem{FW21} H. Ding, F. Wu, The Liouville theorems for 3D stationary tropical climate model, Math. Methods Appl. Sci. 44 (18) (2021) 14437-14450.

\bibitem{DFZ26} Y. Dong, Y. Fang, Z. Zhang, Some new Liouville type theorems for 3D steady tropical climate model, preprint, arXiv:2504.09423v2.

\bibitem{Evans} L.C. Evans, Partial differential equations, Second edition. Graduate Studies in Mathematics, 19. American Mathematical Society, Providence, RI, 2010. xxii+749 pp.

\bibitem{FMP04} D. Frierson, A. Majda, O. Pauluis, Large scale dynamics of precipitation fronts in the tropical atmosphere, a novel relaxation limit, Commun. Math. Sci. 2 (2004) 591-626.

\bibitem{Galdi} G.P. Galdi, An introduction to the Mathematical Theory of the Navier-Stokes Equations: Steady-State Problems, 2nd edn., Springer Monographs in Mathematics, Springer, New York, 2011.

\bibitem{Giaquinta} M. Giaquinta, Multiple Integrals in the Calculus of Variations and Nonlinear Elliptic Systems, Annals of Mathematics Studies, 105, Princeton Univ. Press, Princeton, NJ, 1983.

\bibitem{Giusti} E. Giusti, Direct methods in the calculus of variations. World Sci. Publ., River Edge, NJ, 2003.

\bibitem{KNSS09} G. Koch, N. Nadirashvili,  G. Seregin, V. $\mathrm{\check{S}}$ver\'{a}k, Liouville theorems for the Navier-Stokes equations and applications, Acta Math. 203(1) (2009) 83-105.

\bibitem{KTW17} H. Kozono, Y. Terasawa, Y. Wakasugi,  A remark on Liouville-type theorems for the stationary Navier-Stokes equations in three space dimensions, J. Funct. Anal. 272(2) (2017) 804-818.

\bibitem{LT16} J. Li, E. Titi, Global well-posedness of strong solutions to a tropical climate model, Discrete Contin. Dyn. Syst. 36(8) (2016) 4495-4516.

\bibitem{Majda03} A. Majda, Introduction to PDEs and Waves for the Atmosphere and Ocean (Courant Lecture Notes in Mathematics), vol. 9. American Mathematical Society, Providence, 2003.

\bibitem{MB03} A. Majda, J. Biello, The nonlinear interaction of barotropic and equatorial baroclinic Rossby waves, J. Atmos. Sci. 60 (2003) 1809-1821.

\bibitem{Seregin16} G. Seregin, Liouville type theorem for stationary Navier-Stokes equations, Nonlinearity 29(8) (2016) 2191-2195.

\bibitem{Seregin18} G. Seregin, Remarks on Liouville type theorems for steady-state Navier-Stokes equations, Algebra i Analiz   30(2) (2018) 238-248;  reprinted in  St. Petersburg Math. J.  30(2)  (2019) 321-328.

\bibitem{SW19} G. Seregin, W. Wang, Sufficient conditions on Liouville type theorems for the 3D steady Navier-Stokes equations, Algebra i Analiz 31(2) (2019) 269-278; reprinted in St. Petersburg Math. J. 31(2) (2020) 387-393.

\bibitem{Tsai21} T.P. Tsai, Liouville type theorems for stationary Navier-Stokes equations, Partial Differ. Equ. Appl. 2(1) (2021), Paper No. 10, 20 pp.

\end{thebibliography}

\end {document}